\RequirePackage{fix-cm}

\documentclass[smallextended]{svjour3}
\smartqed
\usepackage{graphicx}

\usepackage{booktabs}
\usepackage{amsfonts}
\usepackage{geometry}

\usepackage{amsfonts}
\usepackage{amsmath}
\usepackage{csquotes}
\usepackage{algorithmicx}
\usepackage{algorithm}
\usepackage{algpseudocode}
\usepackage{color}

\usepackage{empheq}
\usepackage{physics}
\usepackage{comment}

\newcommand{\RR}{\mathbb{R}}

\begin{document}

\title{Weighted Derivative Histopolation on Arbitrary Grids:
Admissibility and Exact Factorizations
}

\titlerunning{Weighted Derivative Histopolation on Arbitrary Grids}        

\author{Allal Guessab \and
        Federico Nudo  
}


\institute{Allal Guessab \at
       Laboratoire de Mathématiques et de leurs Applications, UMR CNRS 5142, Université de Pau et des Pays de l'Adour (UPPA), France\\ 
             \email{allal.guessab@univ-pau.fr} 
         \and
            Federico Nudo (corresponding author) \at
              Department of Mathematics and Computer Science, University of Calabria, Rende (CS), Italy\\
              \email{federico.nudo@unical.it}
}

\date{Received: date / Accepted: date}

\maketitle

\begin{abstract}
In this paper, we introduce a weighted derivative histopolation framework on
families of intervals. The degrees of freedom consist of one scalar
normalization and weighted integral moments of the derivative over a
prescribed family of subintervals. We prove that the resulting scheme is
unisolvent on $\Pi_N$ when the interval family separates polynomials
of degree at most $N-1$ through weighted moments and the normalization is
nonzero on constants. Thus, the derivative moments determine the polynomial up to an additive constant, and the scalar normalization fixes this remaining degree of freedom. This gives a sharp criterion for the
well-posedness of the interpolation problem and a complete characterization
of the admissible scalar normalizations. We then show how admissible
families of intervals can be constructed from a fixed grid. When the
endpoints of the intervals belong to the grid, admissibility is reduced to
the nonsingularity of an interval matrix associated with the family, which
depends only on the representation of the intervals in terms of consecutive
cells. For Jacobi weights, the associated data matrices have a natural block
structure in Jacobi polynomial bases, and the reduced derivative matrix can
be expressed in terms of shifted Jacobi moment matrices. We next study
Chebyshev configurations in which this structure becomes explicit. For the four classical Chebyshev families, suitable polynomial bases lead to
diagonal Gram matrices for the reduced derivative matrices. We show that this
diagonal structure depends on the simultaneous choice of the weight, the
basis, and the grid. In particular, for general Jacobi weights on a fixed
Chebyshev--Lobatto grid, the corresponding discrete orthogonality is not
available in general, although an identity Gram matrix can always be obtained
by a change of basis. We also identify configurations in which the reduced
derivative matrices admit exact sine transform factorizations, yielding
explicit singular values and spectral condition numbers. Numerical experiments on equispaced and
Chebyshev--Lobatto nodes show the behaviour of the method for different
interval families and for different Jacobi parameters.
\keywords{Histopolation\and Weighted derivative moments\and Jacobi polynomials\and Chebyshev polynomials}
\subclass{41A05}
\end{abstract}

\section{Introduction}
\label{sec:introduction}

Approximation from integral or averaged data is a natural alternative to
classical interpolation when pointwise values are not available, or when they
do not represent the information provided by the measurement
process~\cite{Bruno:2024:PIO,Guessab:2026:SDO}. This situation occurs in
several applications in which the data are integrals over intervals, cells,
faces, or more general geometric regions, as in tomography and image
reconstruction~\cite{Kak:2001:POC,Palamodov:2016:RFI}. In these cases, the
reconstruction procedure should be consistent with the integral nature of the
data. This is the basic principle of histopolation, where the degrees of
freedom are given by averages or moment type functionals rather than by point
evaluations~\cite{Schoenberg}. In this sense, histopolation is a particular
case of polynomial interpolation defined by linear functionals. In the
classical Lagrange case, these functionals are evaluations at prescribed
nodes; in the histopolation case, they are integral averages over prescribed
subintervals~\cite{Davis:1975:IAA}. Several polynomial histopolation schemes have been introduced and studied in
recent years~\cite{Bruni:2025:TFP,Bruni:2025:PHO,Bruno:2026:BPH}. When the
integral functionals are taken with respect to a weight, this leads to
weighted histopolation~\cite{Nudo:2026:FRU,DellAccio:2026:NAM}. Such
constructions arise when the averaged data are taken with respect to a
nonuniform density, represented by the weight function. In geometric design,
for instance, averaging operators are often used to preserve qualitative
properties such as positivity and monotonicity~\cite{demichelis1995graphic}.
Related average data reconstruction problems also occur in sampling and
reconstruction on combinatorial graphs~\cite{furh}. In contrast to the
unweighted case, the presence of a weight changes the moment conditions and
may affect fundamental structural properties, including
unisolvence~\cite{Guessab:2026:QWH}. Establishing unisolvent weighted
frameworks is therefore a nontrivial problem, both from a theoretical and a
computational viewpoint.

In this paper, we focus on the univariate case and consider data given by
weighted moments of the derivative over a family of intervals, rather than by
weighted moments of the function. Thus, given $f\in C^1([a,b])$, the degrees
of freedom have the form
\[
\int_s f'(x)\omega(x)\dd x,
\quad s\subset [a,b].
\]
The resulting functionals are still integral functionals, but they act on the
derivative and therefore measure weighted averaged variations of the
function. Derivative information is
used in several approximation and reconstruction procedures in which first
order data are available together with, or instead of, pointwise function
values. This occurs, for example, in methods based on Hermite type
data~\cite{Johnson:1975:TTH,Finden:2007:HOA,Finden:2008:AET} and in
gradient-augmented polynomial
approximation~\cite{Nave:2010:AGA,Adcock:2019:CHI}. Integral data involving
derivatives also appear in classical interpolation theory. In particular,
Kergin interpolation may be regarded as a multivariate interpolation process
in which unweighted integral expressions involving derivatives over simplices
play a fundamental role~\cite{Kergin:1980:APC}. This type of interpolation has
been developed and used in several contexts, ranging from constructive
representations through multivariate B-splines to structural and complex
analytic aspects of interpolation on the disk; see~\cite{micchelli1980constructive,bos1983kergin,bloom1997kergin,petersson2002kergin,Barrera:2017:HSI,Barrera:2021:ONM}. The present paper takes this point of view in a different direction. The
averaged differential data are weighted and are prescribed on a family of
intervals. Since derivative moments do not determine additive constants, the
first question is to understand when they determine a polynomial up to this
remaining degree of freedom, and which scalar normalization fixes it. The second question is to identify configurations for which the associated
matrices admit explicit algebraic decompositions, so that stability and
conditioning can be analyzed directly.

Without loss of generality, we work on the interval $[-1,1]$. Let
\[
\mathcal S=\left\{s_1,\ldots,s_N\right\},
\quad
s_i\subset[-1,1],
\]
be a family of nondegenerate intervals, and let
$\omega\in L^1(-1,1)$ be positive almost everywhere. We consider the
functionals
\[
\mathcal D_i^\omega(f)
=
\int_{s_i} f'(x)\omega(x)\dd x,
\quad i=1,\ldots,N.
\]
Since the derivative of a constant polynomial vanishes, the functionals
$\mathcal D_i^\omega$ cannot determine the constant component of a polynomial.
Therefore, a scalar normalization has to be added.

The first part of the paper is devoted to the unisolvence of this
construction. We introduce the notion of an $\omega$-admissible family of
intervals. This means that the weighted moments over the intervals in
$\mathcal S$ uniquely determine every polynomial in $\Pi_{N-1}$. We prove that
this condition characterizes the families for which the derivative moments
determine a polynomial in $\Pi_N$ up to an additive constant. Consequently,
once the interval family $\mathcal S$ is $\omega$-admissible, if a linear
functional $\mathcal M$ is added as normalization, the degrees of freedom
\[
\left\{
\mathcal M,
\mathcal D_1^\omega,\ldots,\mathcal D_N^\omega
\right\}
\]
are unisolvent on $\Pi_N$ if and only if $\mathcal M(1)\neq0$. Equivalently, the derivative moments determine the polynomial up to an
additive constant, and the scalar normalization fixes this remaining degree of
freedom. This gives a complete characterization of the scalar normalizations
which ensure uniqueness. We then study how admissible families of intervals can be
generated from a fixed grid. For families whose endpoints belong to the grid,
admissibility is reduced to a finite dimensional matrix condition depending
only on how the intervals are formed from the grid. We next consider Jacobi weights. This choice is natural because it is
associated with Jacobi polynomial bases and includes several classical
weights. In this setting, the data matrix has a block form which separates the
constant polynomial from the nonconstant part. The reduced derivative matrix
is the relevant matrix for the nonconstant component, and it can be expressed
in terms of shifted Jacobi moment matrices. This gives an algebraic
description of the weighted problem in bases adapted to the weight. A more explicit description is obtained in Chebyshev configurations. For the
four classical Chebyshev families, suitable polynomial bases lead to diagonal
Gram matrices for the reduced derivative matrices. We emphasize that these
diagonal identities depend on the simultaneous choice of the weight, the
basis, and the grid. They are not a direct consequence of the continuous
orthogonality of Jacobi polynomials on the whole interval. Rather, they come
from the discrete trigonometric orthogonality associated with the
Chebyshev--Lobatto cells. For a fixed pair $(\alpha,\beta)$, the Jacobi--Gauss--Lobatto grid associated
with the weight $\omega_{\alpha,\beta}$ can be used instead, with interior
nodes given by the zeros of the Jacobi polynomial with shifted parameters
$(\alpha+1,\beta+1)$. In that case, the interval family depends on
$(\alpha,\beta)$. We also identify configurations in
which the reduced derivative matrices admit exact sine transform
factorizations. These factorizations yield explicit singular values and
spectral condition numbers.

The paper is organized as follows. In Section~\ref{sec2} we introduce the
weighted derivative moment functionals, define admissible families of
intervals, and prove the unisolvence result on the polynomial space $\Pi_N$.
We also give a matrix characterization of admissibility for interval families
generated from a fixed grid, and use it to construct several admissible
configurations. In Section~\ref{sec3} we analyze the construction for Jacobi
weights. We derive the block form of the data matrix in the Jacobi basis and
express the reduced derivative matrix in terms of shifted Jacobi moment
matrices. We also study Chebyshev configurations in which the reduced Gram
matrices are diagonal in suitable polynomial bases, and identify cases where
exact sine transform factorizations give explicit singular values and
spectral condition numbers. Section~\ref{sec4} uses weighted Jacobi
primitives to relate derivative histopolation to nodal interpolation on
Chebyshev grids. Section~\ref{sec5} contains the numerical experiments, which
illustrate the behaviour of the method for different interval families, grids,
and Jacobi parameters.

\section{Weighted derivative histopolation and unisolvence}
\label{sec2}
Let $N\geq1$, and let
\[
\mathcal S=\left\{s_1,\ldots,s_N\right\},
\quad
s_i=\left[a_i,b_i\right]\subset[-1,1],
\quad a_i<b_i,
\]
be a family of nondegenerate intervals. Let
$\omega\in L^1(-1,1)$ satisfy
\begin{equation}\label{eq:positive-weight}
\omega(x)>0
\quad\text{for almost every }x\in(-1,1).
\end{equation}
For each $i=1,\ldots,N$, we define the weighted derivative moment functional
\begin{equation}\label{eq:derivative-functional}
\mathcal D_i^\omega:
f\in C^1([-1,1])
\to
\int_{s_i}f'(x)\omega(x)\dd x
\in\RR,
\end{equation}
and the corresponding weighted moment functional
\begin{equation*}
\mathcal H_i^\omega:
g\in C([-1,1])
\to
\int_{s_i}g(x)\omega(x)\dd x
\in\RR.
\end{equation*}
We introduce the weighted derivative moment map
\begin{equation}\label{eq:admissible-derivative-map}
\mathcal D_{\mathcal S}^{\omega}:
f\in C^1([-1,1]) 
\to
\left[
\mathcal D_1^\omega(f),
\ldots,
\mathcal D_N^\omega(f)
\right]^\top
\in\RR^N, 
\end{equation}
and the weighted moment map
\begin{equation}\label{eq:admissible-moment-map}
\mathcal H_{\mathcal S}^{\omega}:
g\in C([-1,1])
\to
\left[
\mathcal H_1^\omega(g),
\ldots,
\mathcal H_N^\omega(g)
\right]^\top
\in\RR^N.
\end{equation}
The family $\mathcal S$ is said to be $\omega$-admissible if the restriction
\[
\mathcal H^{\omega}_{{\mathcal S}_{|{\Pi_{N-1}}}}
:
\Pi_{N-1}\to\RR^N
\]
is an isomorphism. Equivalently, since $\dim\Pi_{N-1}=N$, the family
$\mathcal S$ is $\omega$-admissible if and only if
\[
q\in\Pi_{N-1},
\quad
\mathcal H_i^\omega(q)=0,
\quad i=1,\ldots,N,
\implies q=0.
\]
Thus, admissibility is precisely the requirement that the weighted moments
over the intervals in $\mathcal S$ uniquely determine every polynomial in
$\Pi_{N-1}$. The following result shows that admissibility of the family
$\mathcal S$ is exactly the condition needed for the derivative moments to
determine a polynomial up to an additive constant.

\begin{proposition}\label{prop:admissible-derivative-kernel}
If $\mathcal S$ is $\omega$-admissible, then the restriction
\[
\mathcal D^{\omega}_{{\mathcal S}_{|{\Pi_{N}}}}
:
\Pi_{N}\to\RR^N
\]
is surjective and
\begin{equation}\label{eq:admissible-derivative-kernel}
\ker\left(\mathcal D^{\omega}_{{\mathcal S}_{|{\Pi_{N}}}}
\right)
=
\Pi_0.
\end{equation}
\end{proposition}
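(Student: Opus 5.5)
The plan is to factor the derivative moment map through differentiation. For every $f\in C^1([-1,1])$ we have $\mathcal D_i^\omega(f)=\mathcal H_i^\omega(f')$ by definition, so
\[
\mathcal D^{\omega}_{{\mathcal S}_{|{\Pi_{N}}}}
=
\mathcal H^{\omega}_{{\mathcal S}_{|{\Pi_{N-1}}}}\circ \frac{\dd}{\dd x},
\]
where $\frac{\dd}{\dd x}:\Pi_N\to\Pi_{N-1}$ is the differentiation operator. Both claims then follow from two properties of this factorization. First, $\frac{\dd}{\dd x}$ maps $\Pi_N$ onto $\Pi_{N-1}$ with kernel $\Pi_0$. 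Second, $\mathcal H^{\omega}_{{\mathcal S}_{|{\Pi_{N-1}}}}$ is an isomorphism, which is exactly the definition of $\omega$-admissibility.

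For surjectivity, take $y\in\RR^N$. By admissibility there is a unique $q\in\Pi_{N-1}$ with $\mathcal H_{\mathcal S}^\omega(q)=y$. Set $p(x)=\int_{-1}^x q(t)\dd t$, so that $p\in\Pi_N$ and $p'=q$. Then $\mathcal D_{\mathcal S}^\omega(p)=\mathcal H_{\mathcal S}^\omega(q)=y$.

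For the kernel, the inclusion $\Pi_0\subseteq\ker$ holds because constants have zero derivative. Conversely, suppose $p\in\Pi_N$ satisfies $\mathcal D_{\mathcal S}^\omega(p)=0$. Then $p'\in\Pi_{N-1}$ and $\mathcal H_i^\omega(p')=0$ for all $i$, so admissibility gives $p'=0$. Hence $p$ is constant, which proves \eqref{eq:admissible-derivative-kernel}.

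As a consistency check, note that $\dim\Pi_N=N+1$. By rank–nullity, a one-dimensional kernel forces the rank to be $N$, so surjectivity and the kernel statement each imply the other. No step is a real obstacle; the only point needing care is that each polynomial $p'$ lies in $\Pi_{N-1}$, so that admissibility, which concerns only $\Pi_{N-1}$, applies.
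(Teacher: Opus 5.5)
Your proof is correct and follows the paper's argument essentially verbatim. Surjectivity comes from taking an antiderivative of the unique $q\in\Pi_{N-1}$ with the prescribed weighted moments (the paper uses an arbitrary base point $x_0\in[-1,1]$ rather than $-1$), and the kernel comes from applying injectivity of $\mathcal H^{\omega}_{\mathcal S}$ on $\Pi_{N-1}$ to $p'$. Your factorization through $\frac{\dd}{\dd x}$ and the rank--nullity remark are only a compact repackaging of that same argument.
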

\begin{proof}
Let $\boldsymbol v\in\RR^N$ be arbitrary. Since
$\mathcal S$ is $\omega$-admissible, there exists a unique
$q\in\Pi_{N-1}$ such that
\[
\mathcal H_{\mathcal S}^{\omega}(q)=\boldsymbol v.
\]
Fix $x_0\in[-1,1]$ and define
\[
p(x)=\int_{x_0}^{x}q(t)\dd t.
\]
Then $p\in\Pi_N$ and $p'=q$. Therefore,
\begin{eqnarray*}
\mathcal D_{\mathcal S}^{\omega}(p)
&=&
\left[
\int_{s_1}p'(x)\omega(x)\dd x,
\ldots,
\int_{s_N}p'(x)\omega(x)\dd x
\right]^\top\\
&=&
\left[
\int_{s_1}q(x)\omega(x)\dd x,
\ldots,
\int_{s_N}q(x)\omega(x)\dd x
\right]^\top
=
\mathcal H_{\mathcal S}^{\omega}(q)
=
\boldsymbol v.
\end{eqnarray*}
Hence $\mathcal D^{\omega}_{{\mathcal S}_{|{\Pi_{N}}}}$ is surjective.

It remains to identify its kernel. Let
\[
p\in
\ker\left(\mathcal D^{\omega}_{{\mathcal S}_{|{\Pi_{N}}}}
\right).
\]
Then $p\in\Pi_N$ and
\[
\mathcal H_{\mathcal S}^{\omega}(p')
=
\mathcal D_{\mathcal S}^{\omega}(p)
=
\boldsymbol 0.
\]
Since $p'\in\Pi_{N-1}$ and
$\mathcal H^{\omega}_{{\mathcal S}_{|{\Pi_{N-1}}}}$ is injective, we
obtain $p'=0$. Hence $p$ is constant, and therefore
\[
\ker\left(\mathcal D^{\omega}_{{\mathcal S}_{|{\Pi_{N}}}}
\right)
\subseteq
\Pi_0.
\]
Conversely, every constant polynomial belongs to this kernel, because its
derivative vanishes identically. Thus
\[
\Pi_0
\subseteq
\ker\left(\mathcal D^{\omega}_{{\mathcal S}_{|{\Pi_{N}}}}
\right).
\]
Combining the two inclusions gives~\eqref{eq:admissible-derivative-kernel}.
\end{proof}

The previous result shows that, for an $\omega$-admissible family of
intervals, the derivative moments determine a polynomial only up to an
additive constant. Therefore, a single additional scalar condition is needed
to obtain unisolvence on $\Pi_N$.

Let
\[
\mathcal M:C([-1,1])\to\RR
\]
be a linear functional, and define the set of degrees of freedom
\[
\Sigma_{\mathcal M,\mathcal S}^{\omega}
=
\left\{
\mathcal M,
\mathcal D_1^\omega,\ldots,
\mathcal D_N^\omega
\right\}.
\]
The functional $\mathcal M$ plays the role of a normalization, fixing the
constant component which is not detected by the derivative moments.

\begin{theorem}\label{thm:admissible-unisolvence}
Let $\mathcal S=\left\{s_1,\ldots,s_N\right\}$
be a family of nondegenerate intervals in $[-1,1]$, and let
$\omega\in L^1(-1,1)$ satisfy~\eqref{eq:positive-weight}. Let
$\mathcal M$ be a linear functional on $C([-1,1])$. Then the
degrees of freedom
\[
\Sigma_{\mathcal M,\mathcal S}^{\omega}
=
\left\{
\mathcal M,
\mathcal D_1^\omega,\ldots,\mathcal D_N^\omega
\right\}
\]
are unisolvent on $\Pi_N$ if and only if $\mathcal S$ is
$\omega$-admissible and
\[
\mathcal M(1)\neq0.
\]
\end{theorem}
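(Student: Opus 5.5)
Unisolvence of the $N+1$ functionals on the $(N+1)$-dimensional space $\Pi_N$ is equivalent to injectivity of the map
\[
\Phi:p\in\Pi_N\mapsto\left(\mathcal M(p),\mathcal D^\omega_{\mathcal S}(p)\right)\in\RR^{N+1},
\]
so throughout we only need to characterize when the kernel of $\Phi$ is trivial. The key structural fact is that for every $p\in\Pi_N$ we have $\mathcal D^\omega_{\mathcal S}(p)=\mathcal H^\omega_{\mathcal S}(p')$, and differentiation maps $\Pi_N$ onto $\Pi_{N-1}$ with kernel $\Pi_0$. We prove the two implications separately.

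\emph{Sufficiency.} Assume $\mathcal S$ is $\omega$-admissible and $\mathcal M(1)\neq0$, and let $p\in\Pi_N$ with $\Phi(p)=0$. By Proposition~\ref{prop:admissible-derivative-kernel}, $\mathcal D^\omega_{\mathcal S}(p)=0$ forces $p=c$ for some constant $c$. Then $0=\mathcal M(p)=c\,\mathcal M(1)$ gives $c=0$. Hence $\Phi$ is injective, and therefore bijective by the dimension count.

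\emph{Necessity.} We argue by contraposition in two cases. First, if $\mathcal M(1)=0$, the constant polynomial $1$ is a nonzero element of $\ker\Phi$, since its derivative vanishes. Second, suppose $\mathcal S$ is not $\omega$-admissible. Then there is a nonzero $q\in\Pi_{N-1}$ with $\mathcal H^\omega_{\mathcal S}(q)=0$. Take any primitive $p_0\in\Pi_N$ of $q$; it is nonconstant, and it satisfies $\mathcal D^\omega_{\mathcal S}(p_0)=0$. To also annihilate $\mathcal M$, we distinguish two subcases:
\begin{itemize}
\item If $\mathcal M(1)\neq0$, set $p=p_0-\frac{\mathcal M(p_0)}{\mathcal M(1)}$. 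This $p$ is still nonconstant, hence nonzero, and $\Phi(p)=0$.
\item If $\mathcal M(1)=0$, the first case already produces a nonzero kernel element.
\end{itemize}
Equivalently, and more compactly, $\ker\Phi$ contains a nonzero element of the two-dimensional space $\mathrm{span}\{1,p_0\}$, since $\mathcal D^\omega_{\mathcal S}$ vanishes on this span while $\mathcal M$ imposes only one linear condition on it.

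The argument is elementary. The only point requiring care is the necessity direction when admissibility fails: the primitive $p_0$ must be corrected by a constant so that it also lies in the kernel of $\mathcal M$. The span argument above handles this uniformly.
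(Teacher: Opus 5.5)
Your proof is correct and follows the paper's argument. Sufficiency goes through Proposition~\ref{prop:admissible-derivative-kernel} and $c\,\mathcal M(1)=0$; necessity uses that the constant $1$ is annihilated by every $\mathcal D_i^\omega$, together with a primitive of a moment-vanishing $q$ shifted by the constant $\mathcal M(p_0)/\mathcal M(1)$. The only difference is that you argue necessity by contraposition, producing a nonzero kernel element, whereas the paper assumes unisolvence and deduces $q=0$ directly.
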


\begin{proof}
Assume first that $\mathcal S$ is $\omega$-admissible and that
$\mathcal M(1)\neq0$. Let $p\in\Pi_N$ satisfy
\[
\mathcal M(p)=0,
\quad
\mathcal D_i^\omega(p)=0,
\quad i=1,\ldots,N.
\]
In particular,
\[
\mathcal D_{\mathcal S}^{\omega}(p)=\boldsymbol 0.
\]
By Proposition~\ref{prop:admissible-derivative-kernel}, we have
\[
p\in
\ker\left(\mathcal D^{\omega}_{{\mathcal S}_{|{\Pi_N}}}\right)
=
\Pi_0.
\]
Hence $p=c$ for some $c\in\RR$. Since
\[
0=\mathcal M(p)=c\mathcal M(1),
\]
and $\mathcal M(1)\neq0$, it follows that $c=0$. Thus the common kernel of
the functionals in $\Sigma_{\mathcal M,\mathcal S}^{\omega}$ is trivial.
Since $\dim\Pi_N=N+1$, this proves unisolvence.

Conversely, assume that
$\Sigma_{\mathcal M,\mathcal S}^{\omega}$ is unisolvent on $\Pi_N$. Since
\[
\mathcal D_i^\omega(1)=0,
\quad i=1,\ldots,N,
\]
we must have $\mathcal M(1)\neq0$. Otherwise the nonzero polynomial $1$ would
belong to the common kernel. It remains to prove that $\mathcal S$ is $\omega$-admissible. Let
$q\in\Pi_{N-1}$ satisfy
\[
\mathcal H_i^\omega(q)=0,
\quad i=1,\ldots,N.
\]
Choose $p\in\Pi_N$ such that $p'=q$. Then
\[
\mathcal D_i^\omega(p)
=
\mathcal H_i^\omega(q)
=
0,
\quad i=1,\ldots,N.
\]
Set
\[
\widetilde p
=
p-\frac{\mathcal M(p)}{\mathcal M(1)}.
\]
Then
\[
\mathcal M\left(\widetilde p\right)=0,
\quad
\mathcal D_i^\omega\left(\widetilde p\right)=\mathcal D_i^\omega(p)=0,
\quad i=1,\ldots,N.
\]
By unisolvence, $\widetilde p=0$. Hence $p$ is constant, and therefore
\[
q=p'=0.
\]
Thus the restricted weighted moment map is injective on
$\Pi_{N-1}$. Since $\dim\Pi_{N-1}=N$, it is an isomorphism. Therefore
$\mathcal S$ is $\omega$-admissible.
\end{proof}
\begin{remark}\label{rem:weighted-mean-normalization}
A simple class of admissible normalizations is provided by weighted mean
functionals. Assume that $\mathcal S$ is $\omega$-admissible, and let
$\eta\in L^1(-1,1)$ satisfy
\begin{equation*}
\int_{-1}^{1}\eta(x)\dd x\neq0.
\end{equation*}
Define
\begin{equation*}
\mathcal M_\eta(f)
=
\int_{-1}^{1}f(x)\eta(x)\dd x,
\quad f\in C([-1,1]).
\end{equation*}
Then
\begin{equation*}
\mathcal M_\eta(1)
=
\int_{-1}^{1}\eta(x)\dd x
\neq0.
\end{equation*}
Therefore, by Theorem~\ref{thm:admissible-unisolvence}, the degrees of freedom
\begin{equation*}
\left\{
\mathcal M_\eta,
\mathcal D_1^\omega,\ldots,
\mathcal D_N^\omega
\right\}
\end{equation*}
are unisolvent on $\Pi_N$.
\end{remark}

\begin{remark}\label{rem:normalization}
Assume that $\mathcal S$ is $\omega$-admissible. For prescribed data
$v_1,\ldots,v_N\in\RR$, the surjectivity of
$\mathcal D^{\omega}_{{\mathcal S}_{|{\Pi_N}}}$ ensures that there exists
$p_0\in\Pi_N$ such that
\begin{equation*}
\mathcal D_i^\omega(p_0)=v_i,
\quad i=1,\ldots,N.
\end{equation*}
Moreover, if $p\in\Pi_N$ has the same derivative moments as $p_0$, then
\begin{equation*}
\mathcal D_{\mathcal S}^{\omega}\left(p-p_0\right)=\boldsymbol 0.
\end{equation*}
By Proposition~\ref{prop:admissible-derivative-kernel}, and since
$p-p_0\in\Pi_N$, this is equivalent to
\[
p-p_0\in\Pi_0.
\]
Hence every polynomial with the prescribed derivative
moments is of the form
\begin{equation*}
p=p_0+c,
\quad c\in\RR.
\end{equation*}
Thus the derivative moments determine the polynomial uniquely up to an
additive constant.

The normalization $\mathcal M(p)=m$ then gives
\begin{equation*}
\mathcal M\left(p_0\right)+c \mathcal M(1)=m.
\end{equation*}
If $\mathcal M(1)\neq0$, this equation determines $c$ uniquely, namely
\begin{equation*}
c=\frac{m-\mathcal M\left(p_0\right)}{\mathcal M(1)}.
\end{equation*}
Therefore, for arbitrary data $m,v_1,\ldots,v_N\in\RR$, the system
\begin{equation*}
\mathcal M(p)=m,
\quad
\mathcal D_i^\omega(p)=v_i,
\quad i=1,\ldots,N,
\end{equation*}
admits a unique solution in $\Pi_N$.
\end{remark}

\begin{remark}\label{rem:normalized-averages}
The functionals in~\eqref{eq:derivative-functional} may equivalently be
replaced by the normalized weighted averages
\begin{equation*}
\widehat{\mathcal D}_i^\omega(p)
=
\frac{1}{\mu_i^\omega}
\int_{s_i}p'(x)\omega(x)\dd x,
\quad
\mu_i^\omega=
\int_{s_i}\omega(x)\dd x.
\end{equation*}
Since $\omega>0$ almost everywhere and each interval $s_i$ is nondegenerate,
we have $\mu_i^\omega>0$. Therefore
\begin{equation*}
\widehat{\mathcal D}_i^\omega(p)=0
\quad\Longleftrightarrow\quad
\mathcal D_i^\omega(p)=0,
\quad i=1,\ldots,N.
\end{equation*}
Thus the normalized and unnormalized derivative moments have the same common
kernel. In particular, replacing $\mathcal D_i^\omega$ by
$\widehat{\mathcal D}_i^\omega$ does not affect the unisolvence result.
\end{remark}
We next give a matrix characterization of admissibility. Let
\[
\mathcal P=\left\{\varphi_0,\ldots,\varphi_{N-1}\right\}
\]
be any basis of $\Pi_{N-1}$, and define the corresponding weighted moment
matrix by
\begin{equation*}
G_{\mathcal S}^{\omega,\mathcal P}
=
\left[
\int_{s_i}\varphi_{k-1}(x)\omega(x)\dd x
\right]_{i,k=1}^{N}.
\end{equation*}

\begin{proposition}\label{prop:determinantal-admissibility}
The family $\mathcal S$ is $\omega$-admissible if and only if
\begin{equation}\label{eq:determinantal-admissibility}
\det\left(G_{\mathcal S}^{\omega,\mathcal P}\right)\neq0.
\end{equation}
\end{proposition}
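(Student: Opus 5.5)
The idea is to identify $G_{\mathcal S}^{\omega,\mathcal P}$ as the matrix of the linear map $\mathcal H^{\omega}_{{\mathcal S}_{|{\Pi_{N-1}}}}:\Pi_{N-1}\to\RR^N$ with respect to the basis $\mathcal P$ of the domain and the canonical basis of $\RR^N$. Admissibility is, by definition, the statement that this map is an isomorphism. Since domain and codomain both have dimension $N$, this holds exactly when the representing square matrix is invertible, that is, when its determinant is nonzero.

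First I will write an arbitrary $q\in\Pi_{N-1}$ uniquely as $q=\sum_{k=1}^{N}c_k\varphi_{k-1}$ with coefficient vector $\boldsymbol c=[c_1,\ldots,c_N]^\top\in\RR^N$. Each $\varphi_{k-1}$ is continuous on $[-1,1]$ and $\omega\in L^1(-1,1)$, so every integral is finite. By linearity of the integral,
\[
\mathcal H_i^\omega(q)=\sum_{k=1}^{N}c_k\int_{s_i}\varphi_{k-1}(x)\omega(x)\dd x,
\quad i=1,\ldots,N,
\]
and therefore $\mathcal H_{\mathcal S}^{\omega}(q)=G_{\mathcal S}^{\omega,\mathcal P}\boldsymbol c$.

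Next I will use the equivalent kernel formulation of admissibility stated before Proposition~\ref{prop:admissible-derivative-kernel}. The family $\mathcal S$ is $\omega$-admissible if and only if $\mathcal H_{\mathcal S}^{\omega}(q)=\boldsymbol 0$ forces $q=0$ for $q\in\Pi_{N-1}$. Because the coordinate map $q\mapsto\boldsymbol c$ is a bijection and $q=0$ exactly when $\boldsymbol c=\boldsymbol 0$, this condition is equivalent to $G_{\mathcal S}^{\omega,\mathcal P}\boldsymbol c=\boldsymbol 0\Rightarrow\boldsymbol c=\boldsymbol 0$. That is the injectivity of a square $N\times N$ matrix, which is equivalent to~\eqref{eq:determinantal-admissibility}.

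I do not expect a genuine obstacle here. The only point needing care is that the representing matrix is square, because $\dim\Pi_{N-1}=N$ equals the number of intervals; this is what makes injectivity equivalent to invertibility. I would also remark that the criterion does not depend on the choice of $\mathcal P$. For another basis $\mathcal P'$ we have $G_{\mathcal S}^{\omega,\mathcal P'}=G_{\mathcal S}^{\omega,\mathcal P}T$, where $T$ is the invertible change-of-basis matrix, so the two determinants vanish simultaneously.
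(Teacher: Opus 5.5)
Your proposal is correct and follows essentially the same route as the paper: identify $G_{\mathcal S}^{\omega,\mathcal P}$ as the matrix of $\mathcal H^{\omega}_{{\mathcal S}_{|{\Pi_{N-1}}}}$ in the basis $\mathcal P$ and the canonical basis, so that admissibility is equivalent to invertibility of this square matrix. Your explicit coordinate computation and the remark on basis independence ($G_{\mathcal S}^{\omega,\mathcal P'}=G_{\mathcal S}^{\omega,\mathcal P}T$) are correct additions that the paper leaves implicit.
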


\begin{proof}
The matrix $G_{\mathcal S}^{\omega,\mathcal P}$ is the matrix representation
of the restricted map
\[
\mathcal H^{\omega}_{{\mathcal S}_{|{\Pi_{N-1}}}}
:
\Pi_{N-1}\to\RR^N
\]
with respect to the basis $\mathcal P$ of $\Pi_{N-1}$ and the canonical basis
of $\RR^N$. Therefore,
$\mathcal H^{\omega}_{{\mathcal S}_{|{\Pi_{N-1}}}}$ is an isomorphism
if and only if $G_{\mathcal S}^{\omega,\mathcal P}$ is nonsingular. This
proves~\eqref{eq:determinantal-admissibility}.
\end{proof}

The determinant formulation also makes clear that admissibility is an open
property with respect to the endpoints. Thus, if the endpoints of an
$\omega$-admissible family are perturbed slightly, the resulting family remains
$\omega$-admissible.

\begin{proposition}\label{prop:openness-admissibility}
Let $\omega\in L^1(-1,1)$ satisfy~\eqref{eq:positive-weight}. Then the set of
$\omega$-admissible ordered families of $N$ nondegenerate intervals is open
with respect to the relative topology induced by
\[
\mathfrak C_N
=
\left\{
\left[a_1,b_1,\ldots,a_N,b_N\right]^{\top}\in[-1,1]^{2N}\,
:\,
a_i<b_i,\quad i=1,\ldots,N
\right\}.
\]
\end{proposition}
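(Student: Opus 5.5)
Fix the monomial basis $\mathcal P=\{1,x,\ldots,x^{N-1}\}$ of $\Pi_{N-1}$. By Proposition~\ref{prop:determinantal-admissibility}, a family is $\omega$-admissible if and only if $\det(G_{\mathcal S}^{\omega,\mathcal P})\neq0$. The plan is therefore to show that the map
\[
F:\mathfrak C_N\to\RR,\quad
F(a_1,b_1,\ldots,a_N,b_N)=\det\left(G_{\mathcal S}^{\omega,\mathcal P}\right),
\]
is continuous. The admissible set is then $F^{-1}(\RR\setminus\{0\})$, which is the preimage of an open set under a continuous map and hence relatively open in $\mathfrak C_N$.

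Since the determinant is a polynomial in the matrix entries, it suffices to prove that each entry
\[
(a_i,b_i)\mapsto\int_{a_i}^{b_i}x^{k}\omega(x)\dd x,\quad k=0,\ldots,N-1,
\]
is continuous on $[-1,1]^2$. To this end I will introduce the primitive
\[
\Phi_k(t)=\int_{-1}^{t}x^k\omega(x)\dd x,\quad t\in[-1,1].
\]
Since $|x^k\omega(x)|\le|\omega(x)|$ on $[-1,1]$ and $\omega\in L^1(-1,1)$, we have $x^k\omega\in L^1(-1,1)$. The indefinite integral of an $L^1$ function is absolutely continuous; concretely,
\[
|\Phi_k(t)-\Phi_k(t')|\le\int_{[t,t']}|\omega(x)|\dd x\to0
\quad\text{as } |t-t'|\to0,
\]
which follows from the absolute continuity of the Lebesgue integral, or alternatively from dominated convergence applied to the indicators $\chi_{[t,t']}\omega$. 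Each entry equals $\Phi_k(b_i)-\Phi_k(a_i)$, so it is continuous in $(a_i,b_i)$. Consequently $F$ is continuous on all of $[-1,1]^{2N}$, and in particular on $\mathfrak C_N$ with the relative topology.

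The only step with real content is the continuity of the entries, because $\omega$ is merely integrable and may be unbounded near $\pm1$, as with Jacobi weights. For this reason I will rely on absolute continuity of the integral rather than on any pointwise bound on $\omega$. The positivity assumption~\eqref{eq:positive-weight} plays no role in the openness argument. The nondegeneracy constraint $a_i<b_i$ only defines the ambient space $\mathfrak C_N$, which is itself open in $[-1,1]^{2N}$, so the admissible set is also open as a subset of $[-1,1]^{2N}$.
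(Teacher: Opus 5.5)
Your proof is correct and follows the paper's argument essentially step for step. The paper also fixes a basis of $\Pi_{N-1}$, writes each entry of $G_{\mathcal S}^{\omega,\mathcal P}$ as $F_k(b_i)-F_k(a_i)$ for a continuous primitive of $\varphi_{k-1}\omega\in L^1(-1,1)$, and uses Proposition~\ref{prop:determinantal-admissibility} to identify the admissible set as the nonzero set of a continuous determinant. Your explicit justification of that continuity via absolute continuity of the Lebesgue integral, and your remark that $\mathfrak C_N$ is itself open in $[-1,1]^{2N}$, are valid refinements of the same proof.
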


\begin{proof}
Fix a basis $\mathcal P=\{\varphi_0,\ldots,\varphi_{N-1}\}$ of $\Pi_{N-1}$. For
each $k=1,\ldots,N$, set
\[
F_k(x)=\int_{0}^{x}\varphi_{k-1}(t)\omega(t)\dd t,
\quad x\in[-1,1].
\]
Since $\varphi_{k-1}\omega\in L^1(-1,1)$, the function $F_k$ is continuous on
$[-1,1]$. Hence, if $s_i=\left[a_i,b_i\right]$, then
\[
\int_{s_i}\varphi_{k-1}(x)\omega(x)\dd x
=
F_k\left(b_i\right)-F_k\left(a_i\right).
\]
Each entry of the matrix $G_{\mathcal S}^{\omega,\mathcal P}$ therefore
depends continuously on the endpoints of the intervals. It follows that
\[
\left[a_1,b_1,\ldots,a_N,b_N\right]^{\top}
\to
\det\left(G_{\mathcal S}^{\omega,\mathcal P}\right)
\]
is continuous on the set of configurations with $a_i<b_i$,
$i=1,\ldots,N$.

By Proposition~\ref{prop:determinantal-admissibility}, the family
$\mathcal S$ is $\omega$-admissible exactly when this determinant is nonzero.
The conclusion follows because the nonzero set of a continuous function is
open.
\end{proof}

We now construct admissible families of intervals from a fixed grid. Let
\[
X_N=\left\{x_0,\ldots,x_N\right\}
\]
be a grid such that
\[
-1\leq x_0<x_1<\cdots<x_N\leq1.
\]
We denote by
\[
e_j=\left[x_{j-1},x_j\right],
\quad j=1,\ldots,N,
\]
the intervals associated with $X_N$.

\begin{proposition}\label{prop:elementary-intervals-admissible}
The family
\[
\mathcal E=\left\{e_1,\ldots,e_N\right\}
\]
is $\omega$-admissible.
\end{proposition}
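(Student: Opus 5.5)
We verify the kernel characterization of $\omega$-admissibility directly. Let $q\in\Pi_{N-1}$ satisfy
\[
\int_{x_{j-1}}^{x_j}q(x)\omega(x)\dd x=0,\quad j=1,\ldots,N,
\]
and show that $q=0$. The idea is a sign-change (Rolle-type) argument: each vanishing weighted moment forces $q$ to change sign inside the corresponding cell. The cells are disjoint apart from shared endpoints, so this produces too many sign changes for a polynomial of degree at most $N-1$.

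\textbf{Step 1: a root in each open cell.} Fix $j$ and suppose $q\not\equiv0$. Then $q$ has finitely many zeros. If $q$ had no zero in the open interval $(x_{j-1},x_j)$, it would keep a strict constant sign there by continuity. Since $\omega>0$ almost everywhere by~\eqref{eq:positive-weight}, the function $q\omega$ would then be strictly positive (or strictly negative) almost everywhere on a set of positive measure. Its integral over $e_j$ would therefore be nonzero, a contradiction. So $q$ vanishes at some point $\xi_j\in(x_{j-1},x_j)$.

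The same reasoning gives a stronger conclusion: $q$ must actually change sign in $(x_{j-1},x_j)$. If it had only zeros of even multiplicity there, $q\omega$ would still have constant sign almost everywhere on the cell and a nonzero integral. This refinement is not needed for the count below.

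\textbf{Step 2: counting zeros.} The open cells $(x_{j-1},x_j)$, $j=1,\ldots,N$, are pairwise disjoint, so the points $\xi_1<\xi_2<\cdots<\xi_N$ are $N$ distinct zeros of $q$. A nonzero polynomial of degree at most $N-1$ has at most $N-1$ distinct zeros, so $q\equiv0$. This establishes injectivity of $\mathcal H^{\omega}_{\mathcal E_{|\Pi_{N-1}}}$. Since $\dim\Pi_{N-1}=N$, the map is an isomorphism and $\mathcal E$ is $\omega$-admissible. Equivalently, by Proposition~\ref{prop:determinantal-admissibility}, $\det\bigl(G_{\mathcal E}^{\omega,\mathcal P}\bigr)\neq0$ for any basis $\mathcal P$.

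\textbf{Main obstacle.} The only delicate point is Step 1, where the weight is merely $L^1$ and positive almost everywhere rather than continuous. We must make sure that a continuous function of strict sign on an open interval, multiplied by $\omega$, has a nonzero integral. This holds because $q\omega$ is integrable (as $q$ is bounded) and is strictly positive almost everywhere on a nondegenerate interval, whose measure is positive. The nondegeneracy $x_{j-1}<x_j$ of the grid is what makes this measure positive.
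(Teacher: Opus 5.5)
Your proposal is correct and follows the same route as the paper's proof: a nonzero $q\in\Pi_{N-1}$ with vanishing weighted moments must vanish somewhere in each open cell $(x_{j-1},x_j)$, because $\omega>0$ almost everywhere. The disjointness of the open cells then gives $N$ distinct zeros, a contradiction, and injectivity plus $\dim\Pi_{N-1}=N$ yields admissibility.
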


\begin{proof}
By definition, we have to show that the restricted moment map
$\mathcal H^{\omega}_{{\mathcal E}_{|{\Pi_{N-1}}}}$ is an isomorphism. Since
$\dim\Pi_{N-1}=N$, it is enough to prove that
$\mathcal H_{\mathcal E}^{\omega}$ is injective. Let $q\in\Pi_{N-1}$ satisfy
\[
\mathcal H_i^\omega(q)=\int_{e_i}q(x)\omega(x)\dd x=0,
\quad i=1,\ldots,N.
\]
Assume, by contradiction, that $q$ is not identically zero. Fix
$i\in\{1,\ldots,N\}$. If $q$ had no zero in $\left(x_{i-1},x_i\right)$, then, by
continuity, $q$ would be either positive or negative throughout this interval.
Since $\omega>0$ almost everywhere, the product $q\omega$ has constant sign
almost everywhere on $\left(x_{i-1},x_i\right)$ and is not identically zero.
Therefore
\[
\mathcal H_i^\omega(q)=\int_{e_i}q(x)\omega(x)\dd x\neq0.
\]
This contradicts the vanishing of the moment over $e_i$. Hence, for each
$i=1,\ldots,N$, there exists a point
\[
\xi_i\in \left(x_{i-1},x_i\right)
\]
such that $q\left(\xi_i\right)=0$. Since each $\xi_i$ belongs to the interior of $e_i$, and the intervals
$e_1,\ldots,e_N$ have pairwise disjoint interiors, the points
$\xi_1,\ldots,\xi_N$ are distinct. Thus $q$ has at least $N$ distinct
zeros. This is impossible for a nonzero polynomial of degree at most $N-1$.
Therefore $q=0$, and $\mathcal H^{\omega}_{{\mathcal E}_{|{\Pi_{N-1}}}}$
is injective. Consequently, $\mathcal E$ is $\omega$-admissible.
\end{proof}

Let $\mathcal S=\{s_1,\ldots,s_N\}$ be a family of intervals whose endpoints
belong to $X_N$. Thus, for each $i=1,\ldots,N$, there exist integers
$k_i,r_i$ such that
\[
0\leq k_i<r_i\leq N
\]
and
\[
s_i=\left[x_{k_i},x_{r_i}\right].
\]
We associate with $\mathcal S$ the interval matrix
\begin{equation}\label{eq:interval-incidence-matrix}
A_{\mathcal S}
=
\left[A_{ij}\right]_{i,j=1}^{N},
\quad
A_{ij}
=
\begin{cases}
1, & k_i<j\leq r_i,\\
0, & \text{otherwise}.
\end{cases}
\end{equation}
Equivalently, $A_{ij}=1$ if and only if $e_j\subseteq s_i$.

\begin{theorem}\label{thm:incidence-admissibility}
The family $\mathcal S$ is $\omega$-admissible if and
only if
\begin{equation*}
\det\left(A_{\mathcal S}\right)\neq0.
\end{equation*}
\end{theorem}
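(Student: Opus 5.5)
The plan is to factor the moment matrix of $\mathcal S$ through that of the elementary family $\mathcal E$, and then read off the conclusion from Propositions~\ref{prop:determinantal-admissibility} and~\ref{prop:elementary-intervals-admissible}.

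Fix any basis $\mathcal P=\{\varphi_0,\ldots,\varphi_{N-1}\}$ of $\Pi_{N-1}$. Since $s_i=[x_{k_i},x_{r_i}]$ is the union of the consecutive cells $e_{k_i+1},\ldots,e_{r_i}$, and these cells overlap only at endpoints (a null set), additivity of the integral gives, for every $g\in C([-1,1])$,
\[
\int_{s_i} g(x)\omega(x)\dd x=\sum_{j=k_i+1}^{r_i}\int_{e_j} g(x)\omega(x)\dd x=\sum_{j=1}^{N}A_{ij}\int_{e_j} g(x)\omega(x)\dd x .
\]
Taking $g=\varphi_{k-1}$ for $k=1,\ldots,N$, this yields the matrix identity
\[
G_{\mathcal S}^{\omega,\mathcal P}=A_{\mathcal S}\,G_{\mathcal E}^{\omega,\mathcal P}.
\]
Equivalently, at the level of maps, $\mathcal H^{\omega}_{\mathcal S}=A_{\mathcal S}\,\mathcal H^{\omega}_{\mathcal E}$.

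By Proposition~\ref{prop:elementary-intervals-admissible}, the family $\mathcal E$ is $\omega$-admissible. Proposition~\ref{prop:determinantal-admissibility} then gives $\det(G_{\mathcal E}^{\omega,\mathcal P})\neq0$. Taking determinants in the factorization,
\[
\det\left(G_{\mathcal S}^{\omega,\mathcal P}\right)=\det\left(A_{\mathcal S}\right)\det\left(G_{\mathcal E}^{\omega,\mathcal P}\right),
\]
so $\det(G_{\mathcal S}^{\omega,\mathcal P})\neq0$ if and only if $\det(A_{\mathcal S})\neq0$. Applying Proposition~\ref{prop:determinantal-admissibility} once more, now to $\mathcal S$, proves both directions of the equivalence at once.

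There is no real obstacle. The only point that needs care is the additivity step: the cells $e_j$ with $A_{ij}=1$ must tile $s_i$ exactly with no gaps, which holds because the indices $k_i<j\le r_i$ are consecutive. One should also note that the factorization does not depend on the weight $\omega$ or on the basis $\mathcal P$, and it does not depend on the grid positions either. This is why the criterion depends only on the combinatorial structure of the family.
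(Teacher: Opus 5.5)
Your proposal is correct and follows the paper's proof: both rest on the factorization $\mathcal H^{\omega}_{\mathcal S}=A_{\mathcal S}\,\mathcal H^{\omega}_{\mathcal E}$ together with the $\omega$-admissibility of the elementary family $\mathcal E$. The only difference is that you finish with $\det(G_{\mathcal S}^{\omega,\mathcal P})=\det(A_{\mathcal S})\det(G_{\mathcal E}^{\omega,\mathcal P})$ and Proposition~\ref{prop:determinantal-admissibility}, whereas the paper argues with kernels in each direction; this difference is cosmetic.
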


\begin{proof}
For any $i=1,\ldots,N$, we have
\[
s_i=\left[x_{k_i},x_{r_i}\right]
=
\bigcup_{j=k_i+1}^{r_i} e_j.
\]
Hence, for any $q\in\Pi_{N-1}$, by additivity of the integral, we get
\[
\int_{s_i}q(x)\omega(x)\dd x
=
\sum_{j=k_i+1}^{r_i}
\int_{e_j}q(x)\omega(x)\dd x
=
\sum_{j=1}^{N} A_{ij}
\int_{e_j}q(x)\omega(x)\dd x,
\quad i=1,\ldots,N.
\]
Therefore, using~\eqref{eq:admissible-moment-map}, we can write these
relations in compact form as
\begin{equation}\label{eq:incidence-factorization}
\mathcal H_{\mathcal S}^{\omega}(q)
=
A_{\mathcal S}
\mathcal H_{\mathcal E}^{\omega}(q),
\quad q\in\Pi_{N-1}.
\end{equation}
By Proposition~\ref{prop:elementary-intervals-admissible}, the 
family $\mathcal E=\left\{e_1,\ldots,e_N\right\}$ is $\omega$-admissible. Thus
$\mathcal H^{\omega}_{{\mathcal E}_{|{\Pi_{N-1}}}}$ is an isomorphism.

Assume first that $\det\left(A_{\mathcal S}\right)\neq0$. Let
$q\in\Pi_{N-1}$ satisfy
\[
\mathcal H_{\mathcal S}^{\omega}(q)=\boldsymbol 0.
\]
By~\eqref{eq:incidence-factorization}, we get
\[
A_{\mathcal S}
\mathcal H_{\mathcal E}^{\omega}(q)
=
\boldsymbol 0.
\]
Since $A_{\mathcal S}$ is nonsingular, it follows that
\[
\mathcal H_{\mathcal E}^{\omega}(q)=\boldsymbol 0.
\]
Since $\mathcal H_{\mathcal E}^{\omega}$ is injective, we obtain $q=0$.
Therefore $\mathcal H^{\omega}_{{\mathcal S}_{|{\Pi_{N-1}}}}$ is injective.
Since $\dim\Pi_{N-1}=N$, the map
$\mathcal H^{\omega}_{{\mathcal S}_{|{\Pi_{N-1}}}}$ is an isomorphism, and
$\mathcal S$ is $\omega$-admissible.

Conversely, assume that $\det\left(A_{\mathcal S}\right)=0$. Then there exists a nonzero
vector $\boldsymbol c\in\RR^N$ such that
\[
A_{\mathcal S}\boldsymbol c=\boldsymbol 0.
\]
Since $\mathcal H_{\mathcal E}^{\omega}$ is an isomorphism, there exists a nonzero polynomial 
$q\in\Pi_{N-1}$ such that
\[
\mathcal H_{\mathcal E}^{\omega}(q)=\boldsymbol c\neq \boldsymbol{0}.
\]
On the other hand,
by~\eqref{eq:incidence-factorization}, we get
\[
\mathcal H_{\mathcal S}^{\omega}(q)
=
A_{\mathcal S}
\mathcal H_{\mathcal E}^{\omega}(q)
=
A_{\mathcal S}\boldsymbol c
=
\boldsymbol 0.
\]
Hence $\mathcal H_{\mathcal S}^{\omega}$ is not injective, and therefore
$\mathcal S$ is not $\omega$-admissible.
\end{proof}

We use the previous result to prove the admissibility of the two interval
families shown in Fig.~\ref{fig:sLsR}.
\begin{proposition}
    \label{cor:cumulative-admissible-families}
Let
\[
X_N=\left\{x_0,\ldots,x_N\right\},
\quad
-1\leq x_0<x_1<\cdots<x_N\leq1.
\]
Then the families
\[
\mathcal{S}_L=\left\{s_{i,L}=\left[x_0,x_i\right]\,:\, i=1,\ldots,N\right\},
\]
and
\[
\mathcal{S}_R=\left\{s_{i,R}=\left[x_{i-1},x_N\right]\,:\, i=1,\ldots,N\right\},
\]
are $\omega$-admissible for any weight $\omega\in L^1(-1,1)$ satisfying~\eqref{eq:positive-weight}.
\end{proposition}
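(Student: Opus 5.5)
We apply Theorem~\ref{thm:incidence-admissibility}. Both families have all endpoints in the grid $X_N$, so each is $\omega$-admissible exactly when its interval matrix $A_{\mathcal S}$ from~\eqref{eq:interval-incidence-matrix} is nonsingular. It therefore suffices to write down $A_{\mathcal S_L}$ and $A_{\mathcal S_R}$ and compute their determinants. The weight $\omega$ plays no role beyond the hypothesis~\eqref{eq:positive-weight}, which Theorem~\ref{thm:incidence-admissibility} already requires.

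For $\mathcal S_L$, the $i$-th interval is $s_{i,L}=[x_0,x_i]$, so $k_i=0$ and $r_i=i$. Hence $A_{ij}=1$ exactly when $0<j\le i$, that is, when $j\le i$. So $A_{\mathcal S_L}$ is the lower triangular matrix with all entries on and below the diagonal equal to $1$. Its determinant is the product of its diagonal entries, which is $1\neq0$.

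For $\mathcal S_R$, the $i$-th interval is $s_{i,R}=[x_{i-1},x_N]$, so $k_i=i-1$ and $r_i=N$. Hence $A_{ij}=1$ exactly when $i-1<j\le N$, that is, when $j\ge i$. So $A_{\mathcal S_R}$ is the upper triangular matrix of ones, and again $\det(A_{\mathcal S_R})=1$.

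Theorem~\ref{thm:incidence-admissibility} then gives the admissibility of both families for every weight satisfying~\eqref{eq:positive-weight}. There is no real obstacle; the only point needing care is the index bookkeeping $k_i<j\le r_i$, in particular checking that the diagonal entries equal $1$ in both cases.

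As an independent check, one could also invert $A_{\mathcal S_L}$ directly: the differences $\mathcal H_i^\omega-\mathcal H_{i-1}^\omega$ recover the cell moments over the $e_i$, and Proposition~\ref{prop:elementary-intervals-admissible} then applies.
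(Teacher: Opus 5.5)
Your proposal is correct and matches the paper's proof: both invoke Theorem~\ref{thm:incidence-admissibility} and observe that $A_{\mathcal S_L}$ and $A_{\mathcal S_R}$ are lower and upper triangular with unit diagonal, so both determinants equal $1$. Your index bookkeeping ($k_i=0$, $r_i=i$ for $\mathcal S_L$ and $k_i=i-1$, $r_i=N$ for $\mathcal S_R$) is accurate, and your alternative check via differences of cumulative moments is also valid.
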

\begin{proof}
We first consider the family
\[
\mathcal S_L=\left\{\left[x_0,x_i\right]\,:\, i=1,\ldots,N\right\}.
\]
For a fixed $i$, the interval $\left[x_0,x_i\right]$ is the union of the elementary
intervals
\[
e_1,\ldots,e_i.
\]
Hence the corresponding  interval matrix $A_{\mathcal{S}_L}$ satisfies
\[
\left[A_{\mathcal{S}_L}\right]_{ij}=1
\quad\Longleftrightarrow\quad
e_j\subset \left[x_0,x_i\right]
\quad\Longleftrightarrow\quad
j\leq i.
\]
Therefore,
\[
\left[A_{\mathcal{S}_L}\right]_{ij}
=
\begin{cases}
1, & j\leq i,\\
0, & j>i.
\end{cases}
\]
Thus $A_{\mathcal{S}_L}$ is lower triangular and all its diagonal entries are equal to $1$.
Consequently,
\[
\det\left(A_{\mathcal{S}_L}\right)=1.
\]
By Theorem~\ref{thm:incidence-admissibility}, the family
$\mathcal S_L$ is $\omega$-admissible. The argument for the family $\mathcal S_R$ is analogous. In this case the
corresponding matrix is upper triangular with all diagonal entries equal to
$1$, and hence has determinant equal to $1$. Therefore, again by
Theorem~\ref{thm:incidence-admissibility}, the family $\mathcal S_R$ is
$\omega$-admissible.
\end{proof}

\begin{remark}
The admissibility of the families
\[
\mathcal{S}_L=\left\{\left[x_0,x_i\right]\,:\, i=1,\ldots,N\right\},
\quad
\mathcal{S}_R=\left\{\left[x_{i-1},x_N\right]\,:\, i=1,\ldots,N\right\},
\]
in the framework of histopolation has already been proved in~\cite{Bruno:2024:PIO,Bruno:2025:OTC}. In Theorem~\ref{thm:incidence-admissibility}, we show that these families
belong to a larger class. Thus, this theorem extends these known unisolvence
results to a broader class of interval families.
\end{remark} 

\begin{figure}[ht]
\centering
\includegraphics[width=0.45\textwidth]{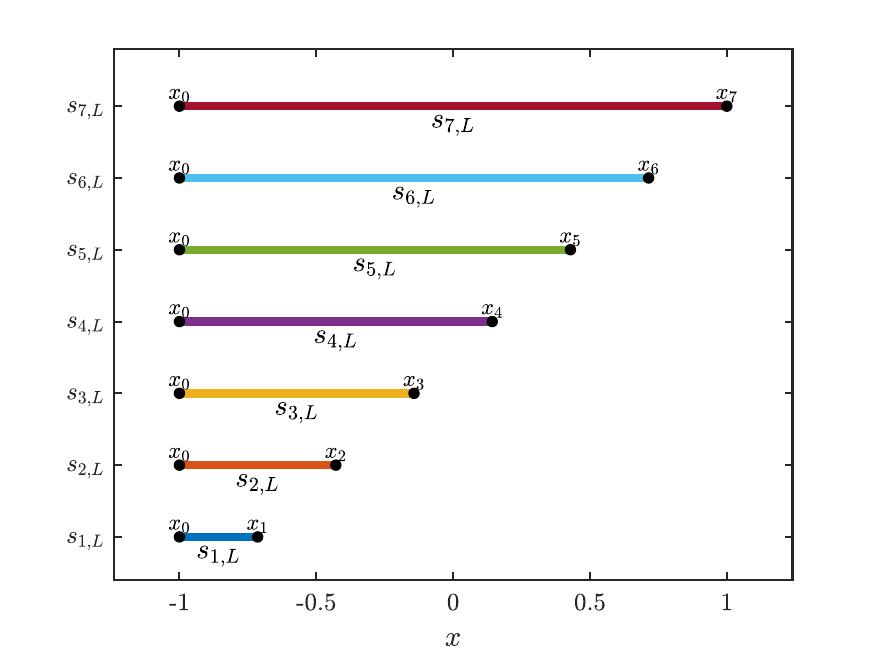}
\includegraphics[width=0.45\textwidth]{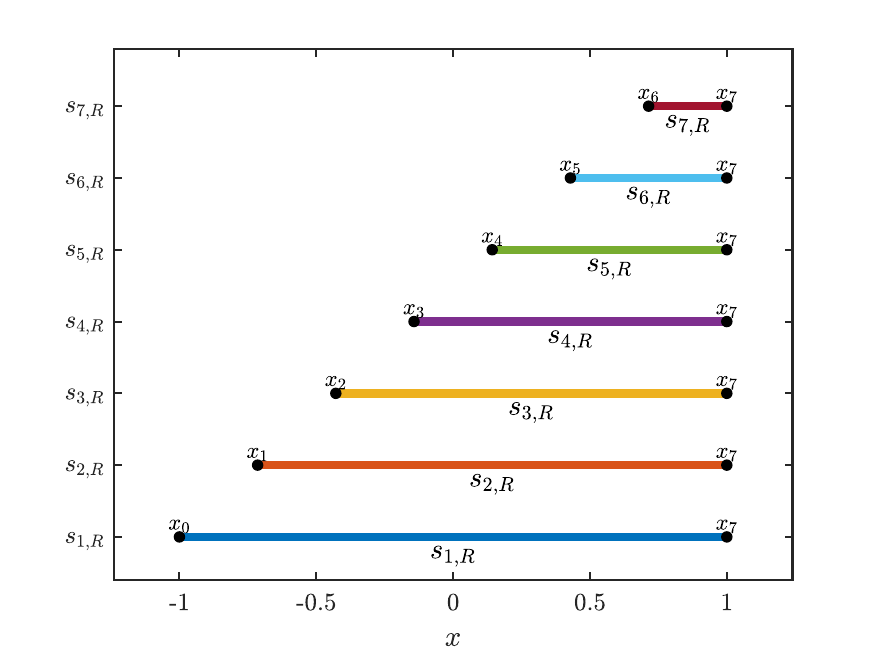}
\\[1ex]
\caption{Left: admissible family $\mathcal S_L$ formed by the intervals $s_{i,L}=\left[x_0,x_i\right]$, $i=1,\ldots,7$. Right: admissible family $\mathcal S_R$ formed by the intervals $s_{i,R}=\left[x_{i-1},x_7\right]$, $i=1,\ldots,7$.}
\label{fig:sLsR}
\end{figure}

The next result gives another admissible configuration, see Fig.~\ref{fig:admissible-interval-families}.

\begin{proposition}\label{cor:seeded-sliding-window-admissible}
Let $N\geq4$, and let
\[
X_N=\left\{x_0,\ldots,x_N\right\},
\quad
-1\leq x_0<x_1<\cdots<x_N\leq1.
\]
Let $\ell$ be an integer such that
\[
3\leq \ell\leq N-1.
\]
Define the family $\mathcal S_\ell=\left\{s_1,\ldots,s_N\right\}$ as
\[
s_i=\left[x_{i-1},x_{i+\ell-1}\right],
\quad i=1,\ldots,N-\ell+1,
\]
and
\[
s_{N-\ell+1+r}=\left[x_0,x_r\right],
\quad r=1,\ldots,\ell-1.
\]
Then $\mathcal S_\ell$ is $\omega$-admissible for any weight
$\omega\in L^1(-1,1)$ satisfying~\eqref{eq:positive-weight}.
\end{proposition}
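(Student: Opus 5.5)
The plan is to apply Theorem~\ref{thm:incidence-admissibility}: it suffices to show that the interval matrix $A_{\mathcal S_\ell}$ is nonsingular. All endpoints of the intervals in $\mathcal S_\ell$ belong to $X_N$, so that theorem applies. I will show nonsingularity by proving that $A_{\mathcal S_\ell}\boldsymbol c=\boldsymbol 0$ forces $\boldsymbol c=\boldsymbol 0$, rather than by computing the determinant directly.

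First I write out the rows. The window interval $s_i=[x_{i-1},x_{i+\ell-1}]$ is the union of $e_i,\ldots,e_{i+\ell-1}$. Hence row $i$, for $i=1,\ldots,N-\ell+1$, has ones exactly in columns $i,\ldots,i+\ell-1$. The prefix interval $[x_0,x_r]$ is the union of $e_1,\ldots,e_r$, so its row has ones in columns $1,\ldots,r$, for $r=1,\ldots,\ell-1$. Thus $A_{\mathcal S_\ell}\boldsymbol c=\boldsymbol 0$ is equivalent to the two sets of equations
\[
\sum_{j=1}^{r}c_j=0,\quad r=1,\ldots,\ell-1,
\qquad
\sum_{j=i}^{i+\ell-1}c_j=0,\quad i=1,\ldots,N-\ell+1.
\]

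Next I solve these equations in order.
\begin{itemize}
\item Subtracting consecutive prefix equations gives $c_1=\cdots=c_{\ell-1}=0$.
\item The first window equation ($i=1$) then yields $c_\ell=0$.
\item Subtracting window equation $i$ from window equation $i+1$, for $i=1,\ldots,N-\ell$, gives the recurrence $c_{i+\ell}=c_i$.
\item Since $c_1=\cdots=c_\ell=0$, induction on $j$ propagates zeros to $c_{\ell+1},\ldots,c_N$. The indices $i+\ell$ with $1\le i\le N-\ell$ cover exactly $\ell+1,\ldots,N$.
\end{itemize}
Hence $\boldsymbol c=\boldsymbol 0$, so $\det(A_{\mathcal S_\ell})\neq0$, and Theorem~\ref{thm:incidence-admissibility} gives $\omega$-admissibility for every weight satisfying~\eqref{eq:positive-weight}. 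Equivalently, these differences are row operations that reduce $A_{\mathcal S_\ell}$ to a permuted unit-triangular form, which gives $\det(A_{\mathcal S_\ell})=\pm1$.

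The only delicate step is the index bookkeeping. I need to check that the window rows together with the $\ell-1$ prefix rows number exactly $N$, which holds since $(N-\ell+1)+(\ell-1)=N$. I also need the recurrence to reach every index up to $N$ without leaving the range of available windows. The hypotheses $N\ge4$ and $3\le\ell\le N-1$ only guarantee that both groups of rows are nonempty and distinct from the families of Proposition~\ref{cor:cumulative-admissible-families}; the argument itself does not otherwise use them.
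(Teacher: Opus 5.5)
Your proof is correct and follows the paper's argument: you use Theorem~\ref{thm:incidence-admissibility} to reduce to showing that $A_{\mathcal S_\ell}\boldsymbol c=\boldsymbol 0$ forces $\boldsymbol c=\boldsymbol 0$, you obtain $c_1=\cdots=c_{\ell-1}=0$ from the prefix rows, and you then propagate zeros through the window rows. The only cosmetic difference is that you propagate via the recurrence $c_{i+\ell}=c_i$ obtained from differences of consecutive windows, whereas the paper reads off $c_{i+\ell-1}=0$ directly from the $i$-th window equation; your additional remark that $\det(A_{\mathcal S_\ell})=\pm1$ is also correct.
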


\begin{proof}
By Theorem~\ref{thm:incidence-admissibility}, it is enough to prove that the
interval matrix $A_{\mathcal S_\ell}$ is nonsingular. We describe its rows.
For the intervals
\[
s_i=\left[x_{i-1},x_{i+\ell-1}\right],
\quad i=1,\ldots,N-\ell+1,
\]
we have
\[
s_i=e_i\cup e_{i+1}\cup\cdots\cup e_{i+\ell-1}.
\]
Hence the first $N-\ell+1$ rows of $A_{\mathcal S_\ell}$ contain $\ell$
consecutive entries equal to $1$, namely
\[
\left[A_{\mathcal S_\ell}\right]_{ij}=1
\quad\Longleftrightarrow\quad
i\leq j\leq i+\ell-1,
\quad i=1,\ldots,N-\ell+1.
\]
For the remaining intervals
\[
s_{N-\ell+1+r}=\left[x_0,x_r\right],
\quad r=1,\ldots,\ell-1,
\]
we have
\[
s_{N-\ell+1+r}=e_1\cup\cdots\cup e_r.
\]
Therefore the last $\ell-1$ rows satisfy
\[
\left[A_{\mathcal S_\ell}\right]_{N-\ell+1+r,j}=1
\quad\Longleftrightarrow\quad
1\leq j\leq r,
\quad r=1,\ldots,\ell-1.
\]
Let
\[
\boldsymbol c=\left[c_1,\ldots,c_N\right]^\top\in\RR^N
\]
satisfy
\[
A_{\mathcal S_\ell}\boldsymbol c=\boldsymbol 0.
\]
From the last $\ell-1$ rows we get
\[
\sum_{j=1}^{r}c_j=0,
\quad r=1,\ldots,\ell-1.
\]
Taking $r=1$ gives $c_1=0$. Subtracting two consecutive identities gives
successively
\begin{equation}\label{cond1a}
    c_2=\cdots=c_{\ell-1}=0.
\end{equation}
From the first $N-\ell+1$ rows we get
\[
\sum_{j=i}^{i+\ell-1}c_j=0,
\quad i=1,\ldots,N-\ell+1.
\]
For $i=1$, this identity reads
\[
c_1+\cdots+c_{\ell-1}+c_\ell=0.
\]
By~\eqref{cond1a}, it follows that $c_\ell=0$. For $i=2$, we get
\[
c_2+\cdots+c_\ell+c_{\ell+1}=0,
\]
and therefore $c_{\ell+1}=0$. Repeating this argument gives
\[
c_\ell=c_{\ell+1}=\cdots=c_N=0.
\]
Together with the previous identities, this yields
\[
\boldsymbol c=\boldsymbol 0.
\]
Hence $A_{\mathcal S_\ell}$ has trivial kernel, and is therefore nonsingular.
By Theorem~\ref{thm:incidence-admissibility}, the family $\mathcal S_\ell$ is
$\omega$-admissible.
\end{proof}

The following example shows that the admissibility criterion established in
Theorem~\ref{thm:incidence-admissibility} goes beyond the two families of
Proposition~\ref{cor:cumulative-admissible-families}. For $N\geq3$, it provides
interval families which are neither nested nor pairwise disjoint in their
interiors, see Fig.~\ref{fig:admissible-interval-families}.

\begin{proposition}\label{prop:overlapping-family-admissible}
Let $N\geq 2$, and let
\[
X_N=\left\{x_0,\ldots,x_N\right\},
\quad
-1\leq x_0<x_1<\cdots<x_N\leq1.
\]
Define
\[
s_i=\left[x_{i-1},x_{i+1}\right],
\quad i=1,\ldots,N-1, \quad s_N=\left[x_0,x_1\right].
\]
Then the family
\[
\mathcal S=\left\{s_1,\ldots,s_N\right\}
\]
is $\omega$-admissible for any weight $\omega\in L^1(-1,1)$
satisfying~\eqref{eq:positive-weight}.
\end{proposition}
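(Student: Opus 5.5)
By Theorem~\ref{thm:incidence-admissibility}, it suffices to show that the interval matrix $A_{\mathcal S}$ is nonsingular. I will describe its rows and then show that its kernel is trivial, in the same way as in the proof of Proposition~\ref{cor:seeded-sliding-window-admissible}.

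For $i=1,\ldots,N-1$ we have $s_i=[x_{i-1},x_{i+1}]=e_i\cup e_{i+1}$. Hence row $i$ has entries equal to $1$ exactly in columns $i$ and $i+1$. For the last interval, $s_N=[x_0,x_1]=e_1$, so row $N$ has a single entry equal to $1$, in column $1$. Thus $A_{\mathcal S}$ consists of an upper bidiagonal block of ones in its first $N-1$ rows, together with the row $[1,0,\ldots,0]$.

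Now let $\boldsymbol c\in\RR^N$ satisfy $A_{\mathcal S}\boldsymbol c=\boldsymbol 0$.
\begin{itemize}
\item The last row gives $c_1=0$.
\item The rows $i=1,\ldots,N-1$ give $c_i+c_{i+1}=0$.
\item Starting from $c_1=0$ and propagating forward, we get $c_2=0$, then $c_3=0$, and so on up to $c_N=0$.
\end{itemize}
Hence $\boldsymbol c=\boldsymbol 0$, so $A_{\mathcal S}$ is nonsingular and $\mathcal S$ is $\omega$-admissible.

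For the case $N=2$ this still works: $s_1=[x_0,x_2]$ and $s_2=[x_0,x_1]$, and the argument above applies unchanged.

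Alternatively, one can compute the determinant directly. Move the last row to the top, which requires $N-1$ adjacent row swaps. The resulting matrix is lower bidiagonal with unit diagonal, so $\det(A_{\mathcal S})=(-1)^{N-1}\neq0$.

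There is no real obstacle in this argument. The only care needed is to read off the row structure correctly, in particular that $s_N$ coincides with the single cell $e_1$.
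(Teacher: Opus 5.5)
Your proof is correct and follows the paper's argument essentially verbatim: you reduce to the nonsingularity of $A_{\mathcal S}$ via Theorem~\ref{thm:incidence-admissibility}, read off $c_1=0$ from the last row, and propagate $c_i+c_{i+1}=0$ forward. Your extra determinant computation, $\det(A_{\mathcal S})=(-1)^{N-1}$, is also correct; the paper does not include it, and it adds nothing essential.
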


\begin{proof}
By Theorem~\ref{thm:incidence-admissibility}, it is enough to prove that the interval 
matrix $A_{\mathcal S}$ defined in~\eqref{eq:interval-incidence-matrix} is
nonsingular. For $i=1,\ldots,N-1$, we have
\[
s_i=\left[x_{i-1},x_{i+1}\right]=e_i\cup e_{i+1},
\]
whereas
\[
s_N=\left[x_0,x_1\right]=e_1.
\]
Hence, by the definition of $A_{\mathcal S}$, we have
\[
A_{\mathcal S}
=
\left[
\begin{array}{cccccc}
1 & 1 & 0 & \cdots & 0 & 0\\
0 & 1 & 1 & \cdots & 0 & 0\\
0 & 0 & 1 & \ddots & 0 & 0\\
\vdots & \vdots & \vdots & \ddots & 1 & 0\\
0 & 0 & 0 & \cdots & 1 & 1\\
1 & 0 & 0 & \cdots & 0 & 0
\end{array}
\right].
\]
We now show that this matrix has trivial kernel. Let
\[
\boldsymbol c=\left[c_1,\ldots,c_N\right]^\top\in\RR^N
\]
satisfy
\[
A_{\mathcal S}\boldsymbol c=\boldsymbol 0.
\]
The last row gives $c_1=0$. On the other hand, the first $N-1$ rows give
\[
c_i+c_{i+1}=0,
\quad i=1,\ldots,N-1.
\]
Starting from $c_1=0$, these identities imply successively
\[
c_2=c_3=\cdots=c_N=0.
\]
Therefore $\boldsymbol c=\boldsymbol 0$, and $A_{\mathcal S}$ is
nonsingular. By Theorem~\ref{thm:incidence-admissibility}, the family
$\mathcal S$ is $\omega$-admissible.
\end{proof}

\begin{figure}[ht]
\centering
\includegraphics[width=0.45\textwidth]{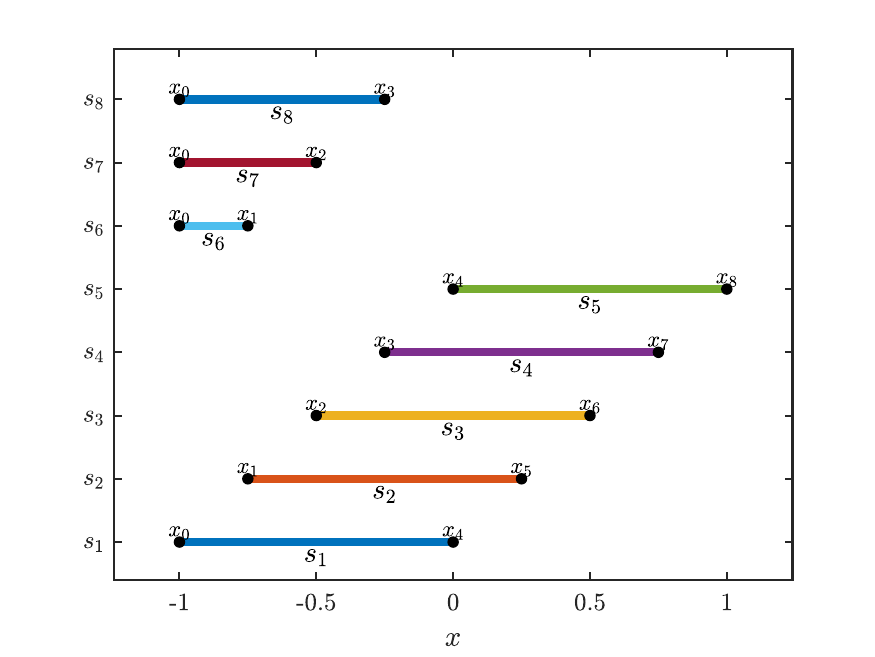}
\includegraphics[width=0.45\textwidth]{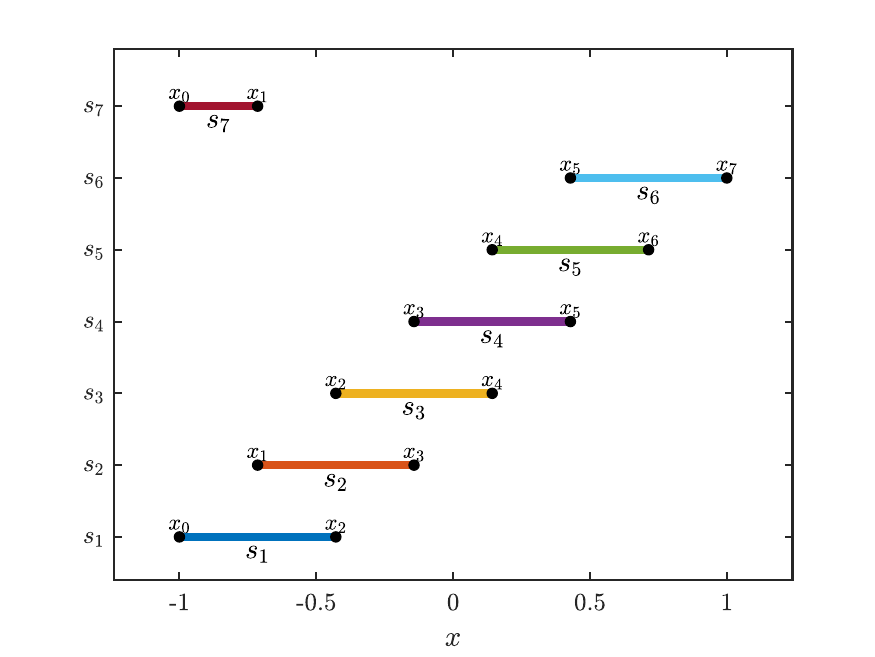}
\\[1ex]
\caption{Left: admissible family $\mathcal S_\ell$ obtained from intervals of fixed length, with $N=8$ and $\ell=4$. Right: overlapping admissible family with intervals $s_i=\left[x_{i-1},x_{i+1}\right]$, $i=1,\ldots,6$, and $s_7=[x_0,x_1]$.}
\label{fig:admissible-interval-families}
\end{figure}

In the next examples, we give two explicit configurations, for $N=2$ and
$N=3$, in which admissibility can be checked directly by computing the
associated interval matrix~\eqref{eq:interval-incidence-matrix}.

\begin{example}\label{ex:admissible-matrix-N2}
Let $N=2$, and consider the grid
\[
X_2=\{-1,0,1\}.
\]
Let
\[
e_1=[-1,0],
\quad
e_2=[0,1],
\]
and define
\[
s_1=[-1,1],
\quad
s_2=[-1,0].
\]
Then
\[
s_1=e_1\cup e_2,
\quad
s_2=e_1.
\]
Hence the associated matrix~\eqref{eq:interval-incidence-matrix} is
\[
A_{\mathcal S}
=
\begin{bmatrix}
1 & 1\\
1 & 0
\end{bmatrix}.
\]
A direct computation gives
\[
\det\left(A_{\mathcal S}\right)=-1\neq0.
\]
By Theorem~\ref{thm:incidence-admissibility}, the family
$\mathcal S=\left\{s_1,s_2\right\}$ is $\omega$-admissible for any weight
$\omega\in L^1(-1,1)$ satisfying~\eqref{eq:positive-weight}. 
\end{example}

\begin{example}\label{ex:admissible-matrix-N3}
Let $N=3$, and consider the grid
\[
X_3=\left\{-1,-\frac{1}{3},\frac{1}{3},1\right\}.
\]
Let
\[
e_1=\left[-1,-\frac{1}{3}\right],
\quad
e_2=\left[-\frac{1}{3},\frac{1}{3}\right],
\quad
e_3=\left[\frac{1}{3},1\right].
\]
We define
\[
s_1=\left[-1,\frac{1}{3}\right],
\quad
s_2=\left[-\frac{1}{3},1\right],
\quad
s_3=[-1,1].
\]
Equivalently,
\[
s_1=e_1\cup e_2,
\quad
s_2=e_2\cup e_3,
\quad
s_3=e_1\cup e_2\cup e_3.
\]
Hence the associated matrix~\eqref{eq:interval-incidence-matrix} is
\[
A_{\mathcal S}
=
\begin{bmatrix}
1 & 1 & 0\\
0 & 1 & 1\\
1 & 1 & 1
\end{bmatrix}.
\]
Thus
\[
\det\left(A_{\mathcal S}\right)=1\neq0.
\]
By Theorem~\ref{thm:incidence-admissibility}, the family $\mathcal S$ is
$\omega$-admissible for every weight $\omega\in L^1(-1,1)$
satisfying~\eqref{eq:positive-weight}.
\end{example}

The next result shows that, on consecutive cells and for the constant weight,
the derivative moment conditions are exactly equivalent to classical nodal
interpolation. In this particular setting, the weighted derivative construction
therefore reduces to the Lagrange interpolating polynomial.

\begin{proposition}\label{prop:lagrange-special-case}
Let
\[
X_N=\left\{x_0,\ldots,x_N\right\},
\quad
-1\leq x_0<x_1<\cdots<x_N\leq1,
\]
and set
\begin{equation*}
s_i=\left[x_{i-1},x_i\right],
\quad i=1,\ldots,N.
\end{equation*}
Assume that $\omega=1$, and define
\begin{equation*}
\mathcal M(p)=p\left(x_0\right),
\quad p\in\Pi_N.
\end{equation*}
Let $f\in C^1\left(\left[x_0,x_N\right]\right)$, and set
\begin{equation*}
\mathcal D_i^1(f)
=
\int_{x_{i-1}}^{x_i}f'(x)\dd x,
\quad i=1,\ldots,N.
\end{equation*}
Then a polynomial $p\in\Pi_N$ satisfies
\begin{equation}\label{eq:lagrange-equivalent-data}
\mathcal M(p)=f\left(x_0\right),
\quad
\mathcal D_i^1(p)=\mathcal D_i^1(f),
\quad i=1,\ldots,N,
\end{equation}
if and only if
\begin{equation}\label{eq:lagrange-data}
p\left(x_i\right)=f\left(x_i\right),
\quad i=0,\ldots,N.
\end{equation}
Consequently, the unique polynomial satisfying~\eqref{eq:lagrange-equivalent-data}
is the Lagrange interpolating polynomial of $f$ at the nodes
$x_0,\ldots,x_N$.
\end{proposition}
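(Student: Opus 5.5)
The plan is to use the fundamental theorem of calculus to turn each derivative moment into a difference of nodal values, and then to exploit the triangular structure of the resulting linear system in the nodal values.

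\textbf{Reduction to differences.} Since $\omega=1$ and both $p$ and $f$ are $C^1$ on $[x_0,x_N]$, the fundamental theorem of calculus gives
\[
\mathcal D_i^1(p)=p(x_i)-p(x_{i-1}),
\quad
\mathcal D_i^1(f)=f(x_i)-f(x_{i-1}),
\quad i=1,\ldots,N.
\]
Hence the conditions~\eqref{eq:lagrange-equivalent-data} read
\[
p(x_0)=f(x_0),
\quad
p(x_i)-p(x_{i-1})=f(x_i)-f(x_{i-1}),
\quad i=1,\ldots,N.
\]

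\textbf{The two implications.} For ``$\Leftarrow$'', if~\eqref{eq:lagrange-data} holds, then the first condition is immediate. Each difference condition also follows directly, since $p(x_i)=f(x_i)$ and $p(x_{i-1})=f(x_{i-1})$. For ``$\Rightarrow$'', I would argue by induction on $i$. The base case $p(x_0)=f(x_0)$ is given. If $p(x_{i-1})=f(x_{i-1})$, the $i$-th difference identity yields $p(x_i)=f(x_i)$. Equivalently, telescoping gives
\[
p(x_i)=p(x_0)+\sum_{j=1}^{i}\bigl(p(x_j)-p(x_{j-1})\bigr)=f(x_i).
\]

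\textbf{Consequence.} The conditions~\eqref{eq:lagrange-data} determine a unique element of $\Pi_N$, namely the Lagrange interpolant, since there are $N+1$ distinct nodes. Therefore the unique solution of~\eqref{eq:lagrange-equivalent-data} is the Lagrange interpolating polynomial. As a consistency check, uniqueness also follows from Theorem~\ref{thm:admissible-unisolvence}. The family of cells is admissible by Proposition~\ref{prop:elementary-intervals-admissible}, and $\mathcal M(1)=1\neq0$.

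There is no real obstacle here. The only point needing care is that $f$ is only defined on $[x_0,x_N]$, which may be a proper subset of $[-1,1]$. This causes no difficulty, since every cell $s_i$ lies inside $[x_0,x_N]$.
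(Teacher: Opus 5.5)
Your proposal is correct and follows the paper's own proof. The fundamental theorem of calculus turns each derivative moment into a nodal difference, the condition $p(x_0)=f(x_0)$ is carried successively across the cells for the forward implication, and the converse is immediate by substitution; your explicit argument for the final uniqueness claim (uniqueness of Lagrange interpolation at $N+1$ distinct nodes, cross-checked with Theorem~\ref{thm:admissible-unisolvence}) fills in a step the paper leaves implicit.
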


\begin{proof}
Since $\omega=1$, the fundamental theorem of calculus gives
\begin{equation}\label{eq:cell-endpoint-differences}
\mathcal D_i^1(p)
=
\int_{x_{i-1}}^{x_i}p'(x)\dd x
=
p\left(x_i\right)-p\left(x_{i-1}\right),
\quad i=1,\ldots,N.
\end{equation}
Similarly,
\begin{equation}\label{eq:cell-endpoint-differences-f}
\mathcal D_i^1(f)
=
\int_{x_{i-1}}^{x_i}f'(x)\dd x
=
f\left(x_i\right)-f\left(x_{i-1}\right),
\quad i=1,\ldots,N.
\end{equation}
Assume first that $p$ satisfies~\eqref{eq:lagrange-equivalent-data}. From
\eqref{eq:cell-endpoint-differences} and
\eqref{eq:cell-endpoint-differences-f}, the derivative moment conditions give
\begin{equation*}
p\left(x_i\right)-p\left(x_{i-1}\right)
=
f\left(x_i\right)-f\left(x_{i-1}\right),
\quad i=1,\ldots,N.
\end{equation*}
Moreover, the normalization in~\eqref{eq:lagrange-equivalent-data} gives
\begin{equation*}
p\left(x_0\right)=f\left(x_0\right).
\end{equation*}
Therefore
\begin{equation*}
p\left(x_1\right)
=
p\left(x_0\right)+f\left(x_1\right)-f\left(x_0\right)
=
f\left(x_1\right).
\end{equation*}
Repeating the same argument on $s_2,\ldots,s_N$, we obtain
\begin{equation*}
p\left(x_i\right)=f\left(x_i\right),
\quad i=0,\ldots,N.
\end{equation*}
Hence~\eqref{eq:lagrange-data} is satisfied.

Conversely, suppose that~\eqref{eq:lagrange-data} is satisfied. Then
\begin{equation*}
\mathcal M(p)=p\left(x_0\right)=f\left(x_0\right).
\end{equation*}
Moreover, for any  $i=1,\ldots,N$, equations~\eqref{eq:cell-endpoint-differences}
and~\eqref{eq:cell-endpoint-differences-f} yield
\begin{equation*}
\mathcal D_i^1(p)
=
p\left(x_i\right)-p\left(x_{i-1}\right)
=
f\left(x_i\right)-f\left(x_{i-1}\right)
=
\mathcal D_i^1(f).
\end{equation*}
Thus~\eqref{eq:lagrange-equivalent-data} follows.
\end{proof}

Assume that $\mathcal S$ is $\omega$-admissible and that
$\mathcal M(1)\neq0$. We now introduce a Lebesgue-type constant associated with the degrees of
freedom
\[
\Sigma_{\mathcal M,\mathcal S}^{\omega}
=
\left\{
\mathcal M,\mathcal D_1^\omega,\ldots,\mathcal D_N^\omega
\right\}.
\]
 By Theorem~\ref{thm:admissible-unisolvence}, there
exists a unique basis
\[
\left\{\psi_0,\psi_1,\ldots,\psi_N\right\}
\]
of $\Pi_N$ satisfying
\[
\mathcal M\left(\psi_0\right)=1,
\quad
\mathcal D_i^\omega\left(\psi_0\right)=0,
\quad i=1,\ldots,N,
\]
and, for $j=1,\ldots,N$,
\[
\mathcal M\left(\psi_j\right)=0,
\quad
\mathcal D_i^\omega\left(\psi_j\right)
=
\begin{cases}
1, & i=j,\\
0, & i\neq j,
\end{cases}
\quad i=1,\ldots,N.
\]
We define the associated Lebesgue-type constant by
\[
\Lambda_{\mathcal M,\mathcal S}^{\omega}
=
\max_{x\in[-1,1]}
\sum_{j=0}^{N}\left|\psi_j(x)\right|.
\]

\begin{proposition}\label{prop:lebesgue-error-estimate}
Let $f\in C^1([-1,1])$, and let
\[
K_{\mathcal M,\mathcal S}^{\omega}:f\in C^1([-1,1])\to K_{\mathcal M,\mathcal S}^{\omega}[f]\in \Pi_N,
\]
be defined by
\[
\mathcal M\left(K_{\mathcal M,\mathcal S}^{\omega}[f]\right)
=
\mathcal M(f),
\quad
\mathcal D_i^\omega\left(K_{\mathcal M,\mathcal S}^{\omega}[f]\right)
=
\mathcal D_i^\omega(f),
\quad i=1,\ldots,N.
\]
Let $q_N^\ast\in\Pi_N$ be such that
\[
\|f-q_N^\ast\|_\infty
=
\inf_{q\in\Pi_N}\|f-q\|_\infty.
\]
Then
\begin{equation}\label{eqsr}
    \left\|
f-K_{\mathcal M,\mathcal S}^{\omega}[f]
\right\|_{\infty}
\leq
\|f-q_N^\ast\|_{\infty}
+
\Lambda_{\mathcal M,\mathcal S}^{\omega}
\max\left\{
\left|\mathcal M\left(f-q_N^\ast\right)\right|,
\max_{1\leq i\leq N}
\left|\mathcal D_i^\omega\left(f-q_N^\ast\right)\right|
\right\}.
\end{equation}
\end{proposition}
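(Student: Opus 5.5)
The argument is the standard Lebesgue-constant estimate. It uses three facts: the operator $K_{\mathcal M,\mathcal S}^{\omega}$ is linear, it reproduces $\Pi_N$, and it has a cardinal representation in the basis $\{\psi_0,\ldots,\psi_N\}$.

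\textbf{Step 1: cardinal representation.} We have $\mathcal M(1)\neq0$, and $\mathcal S$ is $\omega$-admissible. Theorem~\ref{thm:admissible-unisolvence} therefore shows that $K_{\mathcal M,\mathcal S}^{\omega}[f]$ is well defined. It is given explicitly by
\[
K_{\mathcal M,\mathcal S}^{\omega}[f]
=
\mathcal M(f)\psi_0+\sum_{j=1}^{N}\mathcal D_j^\omega(f)\psi_j .
\]
To check this, apply $\mathcal M$ and each $\mathcal D_i^\omega$ to the right-hand side and use the duality relations that define the $\psi_j$. The right-hand side lies in $\Pi_N$ and satisfies the defining conditions, so unisolvence gives equality. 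As a consequence, $K_{\mathcal M,\mathcal S}^{\omega}$ is linear on $C^1([-1,1])$.

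\textbf{Step 2: reproduction of $\Pi_N$.} Take $q\in\Pi_N$. Then $q$ itself satisfies the defining conditions, so uniqueness gives $K_{\mathcal M,\mathcal S}^{\omega}[q]=q$. In particular, $K_{\mathcal M,\mathcal S}^{\omega}[q_N^\ast]=q_N^\ast$.

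\textbf{Step 3: splitting the error.} Write
\[
f-K_{\mathcal M,\mathcal S}^{\omega}[f]
=
(f-q_N^\ast)-K_{\mathcal M,\mathcal S}^{\omega}[f-q_N^\ast].
\]
This uses linearity together with Step 2, and we note that $f-q_N^\ast\in C^1([-1,1])$. The triangle inequality then bounds the left side by $\|f-q_N^\ast\|_\infty+\|K_{\mathcal M,\mathcal S}^{\omega}[g]\|_\infty$, where $g=f-q_N^\ast$.

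\textbf{Step 4: bounding the correction term.} Using Step 1, for each $x\in[-1,1]$ we get
\[
\left|K_{\mathcal M,\mathcal S}^{\omega}[g](x)\right|
\le
\max\Bigl\{|\mathcal M(g)|,\max_{i}|\mathcal D_i^\omega(g)|\Bigr\}
\sum_{j=0}^{N}|\psi_j(x)| .
\]
Taking the maximum over $x$ bounds the sum by $\Lambda_{\mathcal M,\mathcal S}^{\omega}$, and this yields~\eqref{eqsr}.

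No step is genuinely hard. The only point needing care is Step 1, namely that the cardinal expansion really is the operator and that the operator is linear. Both follow directly from uniqueness in Theorem~\ref{thm:admissible-unisolvence}. We also do not need $q_N^\ast$ to exist by any separate argument: it is assumed in the statement (and exists anyway by compactness in the finite-dimensional space $\Pi_N$).
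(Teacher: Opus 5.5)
Your proof is correct and follows the paper's argument. You use reproduction of $\Pi_N$ together with linearity to write $f-K_{\mathcal M,\mathcal S}^{\omega}[f]=(f-q)-K_{\mathcal M,\mathcal S}^{\omega}[f-q]$, then the triangle inequality, then the cardinal expansion in $\{\psi_0,\ldots,\psi_N\}$ to bound the correction term by $\Lambda_{\mathcal M,\mathcal S}^{\omega}$ times the maximum of the data. The differences are cosmetic: the paper proves the bound for an arbitrary $q\in\Pi_N$ before specializing to $q_N^\ast$ and takes linearity for granted, whereas you derive linearity explicitly from the cardinal representation.
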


\begin{proof}
Let $q\in\Pi_N$. By uniqueness, we have
\[
K_{\mathcal M,\mathcal S}^{\omega}[q]=q.
\]
Since $K_{\mathcal M,\mathcal S}^{\omega}$ is linear, we get
\[
f-K_{\mathcal M,\mathcal S}^{\omega}[f]
=
(f-q)-K_{\mathcal M,\mathcal S}^{\omega}[f-q].
\]
Hence
\begin{equation}\label{eq:first-lebesgue-estimate}
\left\|
f-K_{\mathcal M,\mathcal S}^{\omega}[f]
\right\|_\infty
\leq
\|f-q\|_\infty
+
\left\|
K_{\mathcal M,\mathcal S}^{\omega}[f-q]
\right\|_\infty .
\end{equation}
By the definition of the basis $\{\psi_0,\psi_1,\ldots,\psi_N\}$, we have
\[
K_{\mathcal M,\mathcal S}^{\omega}[f-q]
=
\mathcal M(f-q)\psi_0
+
\sum_{i=1}^{N}\mathcal D_i^\omega(f-q)\psi_i .
\]
Therefore,
\begin{equation}\label{eq:second-lebesgue-estimate}
\left\|
K_{\mathcal M,\mathcal S}^{\omega}[f-q]
\right\|_\infty
\leq
\Lambda_{\mathcal M,\mathcal S}^{\omega}
\max\left\{
|\mathcal M(f-q)|,
\max_{1\leq i\leq N}
|\mathcal D_i^\omega(f-q)|
\right\}.
\end{equation}
Combining~\eqref{eq:first-lebesgue-estimate} and
\eqref{eq:second-lebesgue-estimate}, we obtain
\begin{equation*}
\left\|
f-K_{\mathcal M,\mathcal S}^{\omega}[f]
\right\|_{\infty}
\leq
\|f-q\|_{\infty}+
\Lambda_{\mathcal M,\mathcal S}^{\omega}
\max\left\{
|\mathcal M(f-q)|,
\max_{1\leq i\leq N}
|\mathcal D_i^\omega(f-q)|
\right\}.
\end{equation*}
Since this estimate holds for any $q\in\Pi_N$, we obtain~\eqref{eqsr}.
\end{proof}

\begin{remark}
If, in addition, $\mathcal M$ is continuous with respect to the uniform norm,
then, for any $f\in C^1([-1,1])$, we have
\[
\left|\mathcal M\left(f-q_N^\ast\right)\right|
\leq
\|\mathcal M\|\left\|f-q_N^\ast\right\|_\infty .
\]
Moreover, since $\omega\in L^1(-1,1)$, the derivative moments satisfy
\[
\left|\mathcal D_i^\omega\left(f-q_N^\ast\right)\right|
\leq
\|\omega\|_{L^1\left(s_i\right)}
\left\|\left(f-q_N^\ast\right)'\right\|_\infty,
\quad i=1,\ldots,N.
\]
In conclusion, under this additional assumption, the bound~\eqref{eqsr} is
explicit in terms of $\Lambda_{\mathcal M,\mathcal S}^{\omega}$,
$\left\|f-q_N^\ast\right\|_\infty$, and
$\left\|\left(f-q_N^\ast\right)'\right\|_\infty$.
\end{remark}

\section{Jacobi-weighted derivative histopolation}
\label{sec3}

In this section, we study the weighted derivative histopolation problem for
Jacobi weights. Let $\alpha,\beta>-1$, and define
\begin{equation}\label{eq:jacobi-weight}
\omega_{\alpha,\beta}(x)
=
(1-x)^\alpha(1+x)^\beta,
\quad x\in(-1,1).
\end{equation}
Then $\omega_{\alpha,\beta}\in L^1(-1,1)$ and
$\omega_{\alpha,\beta}>0$ almost everywhere on $(-1,1)$. Let
$\mathcal S=\left\{s_1,\ldots,s_N\right\}$ be an
$\omega_{\alpha,\beta}$-admissible family of nondegenerate intervals. We denote by
\begin{equation*}
P_0^{(\alpha,\beta)},\ldots,P_N^{(\alpha,\beta)}
\end{equation*}
the Jacobi polynomials up to degree $N$ in the standard normalization, so that
$P_0^{(\alpha,\beta)}=1$. As a normalization, we define
\begin{equation}\label{eq:jacobi-mean}
\mathcal M_{\alpha,\beta}(f)
=
\int_{-1}^{1}
f(x)\omega_{\alpha,\beta}(x)\dd x,
\quad f\in C([-1,1]).
\end{equation}
On the constant polynomial, we have
\begin{equation*}
\mathcal M_{\alpha,\beta}(1)
=
\int_{-1}^{1}\omega_{\alpha,\beta}(x)\dd x
=
2^{\alpha+\beta+1}
\frac{\Gamma(\alpha+1)\Gamma(\beta+1)}
{\Gamma(\alpha+\beta+2)}
>0,
\end{equation*}
where $\Gamma(\cdot)$ denotes the Gamma function~\cite{Abramowitz:1948:HOM,Milovanovic:1997:OPS}. Hence, by Theorem~\ref{thm:admissible-unisolvence}, the degrees of freedom
\begin{equation*}
\left\{
\mathcal M_{\alpha,\beta},
\mathcal D_1^{\omega_{\alpha,\beta}},
\ldots,
\mathcal D_N^{\omega_{\alpha,\beta}}
\right\}
\end{equation*}
are unisolvent on $\Pi_N$. We denote by $D_N^{(\alpha,\beta)}$ the matrix of
$\mathcal D_{\mathcal S}^{\omega_{\alpha,\beta}}$, defined
in~\eqref{eq:admissible-derivative-map}, restricted to
\[
\operatorname{span}\left\{
P_1^{(\alpha,\beta)},\ldots,P_N^{(\alpha,\beta)}
\right\},
\]
with respect to the basis
$\left\{P_1^{(\alpha,\beta)},\ldots,P_N^{(\alpha,\beta)}\right\}$ and the
canonical basis of $\RR^N$. Thus
\begin{equation}\label{eq:jacobi-derivative-matrix}
\left[D_N^{(\alpha,\beta)}\right]_{ik}
=
\mathcal D_i^{\omega_{\alpha,\beta}}
\left(P_k^{(\alpha,\beta)}\right)
=
\int_{s_i}
\frac{\dd}{\dd x}P_k^{(\alpha,\beta)}(x)
\omega_{\alpha,\beta}(x)\dd x,
\quad i,k=1,\ldots,N.
\end{equation}
We also introduce the Jacobi data map
\begin{equation*}
\mathcal J_{\mathcal S}^{(\alpha,\beta)}:
p\in\Pi_N
\to
\left[
\mathcal M_{\alpha,\beta}(p),
\mathcal D_1^{\omega_{\alpha,\beta}}(p),
\ldots,
\mathcal D_N^{\omega_{\alpha,\beta}}(p)
\right]^\top
\in\RR^{N+1}.
\end{equation*}

\begin{proposition}\label{prop:jacobi-block-form}
Let
$\mathcal S=\left\{s_1,\ldots,s_N\right\}$ be an
$\omega_{\alpha,\beta}$-admissible family of nondegenerate intervals. Let
$H_N^{(\alpha,\beta)}$ be the matrix representation of the map
$\mathcal J_{\mathcal S}^{(\alpha,\beta)}$ with respect to the Jacobi basis
\[
\mathcal B_N^{(\alpha,\beta)}
=
\left\{
P_0^{(\alpha,\beta)},\ldots,
P_N^{(\alpha,\beta)}
\right\}
\]
of $\Pi_N$ and the canonical basis of $\RR^{N+1}$. Then
\begin{equation}\label{eq:jacobi-block-form}
H_N^{(\alpha,\beta)}
=
\begin{bmatrix}
h_0^{(\alpha,\beta)} & \boldsymbol 0^\top\\[2mm]
\boldsymbol 0 & D_N^{(\alpha,\beta)}
\end{bmatrix},
\end{equation}
where
\begin{equation*}
h_0^{(\alpha,\beta)}
=
\mathcal M_{\alpha,\beta}(1)
=
\int_{-1}^{1}\omega_{\alpha,\beta}(x)\dd x.
\end{equation*}
In particular, $H_N^{(\alpha,\beta)}$ and $D_N^{(\alpha,\beta)}$ are
nonsingular.
\end{proposition}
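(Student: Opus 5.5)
The plan is to compute the entries of $H_N^{(\alpha,\beta)}$ column by column, using the ordering of the data map $\mathcal J_{\mathcal S}^{(\alpha,\beta)}$: the first row comes from $\mathcal M_{\alpha,\beta}$, and rows $2,\ldots,N+1$ come from $\mathcal D_1^{\omega_{\alpha,\beta}},\ldots,\mathcal D_N^{\omega_{\alpha,\beta}}$. Columns are indexed by $P_0^{(\alpha,\beta)},\ldots,P_N^{(\alpha,\beta)}$.

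We begin with the first column. Since $P_0^{(\alpha,\beta)}=1$, its top entry is $\mathcal M_{\alpha,\beta}(1)=h_0^{(\alpha,\beta)}$. Its remaining entries are $\mathcal D_i^{\omega_{\alpha,\beta}}(1)=0$, because the derivative of a constant vanishes. For the first row with $k\ge1$, we use the continuous orthogonality of Jacobi polynomials with respect to $\omega_{\alpha,\beta}$. Since $P_0^{(\alpha,\beta)}=1$,
\[
\mathcal M_{\alpha,\beta}\left(P_k^{(\alpha,\beta)}\right)=\int_{-1}^1 P_k^{(\alpha,\beta)}(x)P_0^{(\alpha,\beta)}(x)\omega_{\alpha,\beta}(x)\dd x=0,\quad k=1,\ldots,N.
\]
The remaining $N\times N$ block has entries $\mathcal D_i^{\omega_{\alpha,\beta}}(P_k^{(\alpha,\beta)})$ with $i,k\ge1$. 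By \eqref{eq:jacobi-derivative-matrix} this block is exactly $D_N^{(\alpha,\beta)}$, which gives \eqref{eq:jacobi-block-form}.

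Nonsingularity then follows from earlier results. We have $h_0^{(\alpha,\beta)}>0$ by the Gamma-function formula. Because $\mathcal S$ is $\omega_{\alpha,\beta}$-admissible and $\mathcal M_{\alpha,\beta}(1)\neq0$, Theorem~\ref{thm:admissible-unisolvence} shows that $\mathcal J_{\mathcal S}^{(\alpha,\beta)}$ is injective on the $(N+1)$-dimensional space $\Pi_N$. Hence $H_N^{(\alpha,\beta)}$, its matrix in a basis, is nonsingular. The block-diagonal form gives
\[
\det H_N^{(\alpha,\beta)}=h_0^{(\alpha,\beta)}\det D_N^{(\alpha,\beta)},
\]
so $\det D_N^{(\alpha,\beta)}\neq0$.

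As an alternative route, one could argue directly from Proposition~\ref{prop:admissible-derivative-kernel}. The kernel of $\mathcal D_{\mathcal S}^{\omega_{\alpha,\beta}}$ on $\Pi_N$ is $\Pi_0$, and $\operatorname{span}\{P_1^{(\alpha,\beta)},\ldots,P_N^{(\alpha,\beta)}\}$ meets $\Pi_0$ only in $0$, since each element has zero $\omega_{\alpha,\beta}$-mean. So the restriction is injective between $N$-dimensional spaces. There is no real obstacle in this argument. The only point requiring care is the vanishing of the first row, which relies on the orthogonality of $P_k^{(\alpha,\beta)}$ to constants; that orthogonality holds for all $\alpha,\beta>-1$.
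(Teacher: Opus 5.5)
Your proposal is correct and follows essentially the same route as the paper. The paper also computes the first row via Jacobi orthogonality and the first column via $P_0^{(\alpha,\beta)}=1$, and then gets nonsingularity from Theorem~\ref{thm:admissible-unisolvence} together with $\det H_N^{(\alpha,\beta)}=h_0^{(\alpha,\beta)}\det D_N^{(\alpha,\beta)}$. Your alternative kernel argument via Proposition~\ref{prop:admissible-derivative-kernel} is also valid.
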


\begin{proof}
The first row of $H_N^{(\alpha,\beta)}$ is obtained by applying
$\mathcal M_{\alpha,\beta}$ to the Jacobi basis. Since
$P_0^{(\alpha,\beta)}=1$, its first entry is
\begin{equation*}
\mathcal M_{\alpha,\beta}
\left(P_0^{(\alpha,\beta)}\right)
=
\mathcal M_{\alpha,\beta}(1)
=
h_0^{(\alpha,\beta)}.
\end{equation*}
For $k=1,\ldots,N$, the orthogonality of the Jacobi polynomials gives
\begin{equation*}
\mathcal M_{\alpha,\beta}
\left(P_k^{(\alpha,\beta)}\right)
=
\int_{-1}^{1}
P_k^{(\alpha,\beta)}(x)\omega_{\alpha,\beta}(x)\dd x
=
0.
\end{equation*}
Thus the first row is
\begin{equation*}
\left[
h_0^{(\alpha,\beta)},\boldsymbol 0^\top
\right].
\end{equation*}
On the other hand, since $P_0^{(\alpha,\beta)}=1$, we have
\begin{equation*}
\frac{\dd}{\dd x}P_0^{(\alpha,\beta)}(x)=0.
\end{equation*}
Hence
\begin{equation*}
\mathcal D_i^{\omega_{\alpha,\beta}}
\left(P_0^{(\alpha,\beta)}\right)
=
0,
\quad i=1,\ldots,N.
\end{equation*}
Therefore the first column below $h_0^{(\alpha,\beta)}$ vanishes. Finally, for $i,k=1,\ldots,N$, the remaining entries are 
\begin{equation*}
\mathcal D_i^{\omega_{\alpha,\beta}}
\left(P_k^{(\alpha,\beta)}\right)
=
\left[D_N^{(\alpha,\beta)}\right]_{ik}.
\end{equation*}
This proves~\eqref{eq:jacobi-block-form}. By Theorem~\ref{thm:admissible-unisolvence}, the degrees of freedom defining
$\mathcal J_{\mathcal S}^{(\alpha,\beta)}$ are unisolvent on $\Pi_N$.
Therefore $H_N^{(\alpha,\beta)}$ is nonsingular. Since
$h_0^{(\alpha,\beta)}>0$, the block form yields
\begin{equation*}
0
\neq
\det\left(H_N^{(\alpha,\beta)}\right)
=
h_0^{(\alpha,\beta)}
\det\left(D_N^{(\alpha,\beta)}\right).
\end{equation*}
Thus $\det\left(D_N^{(\alpha,\beta)}\right)\neq0$, and
$D_N^{(\alpha,\beta)}$ is nonsingular.
\end{proof}

\begin{remark}\label{rem-mixed-jacobi-block-form}
The block structure in Proposition~\ref{prop:jacobi-block-form} only uses the
fact that the first basis function is constant and that the remaining basis
functions have zero mean with respect to the normalization functional~\eqref{eq:jacobi-mean}. This
observation will be used below in situations where the parameters defining the
orthogonal basis and the parameters defining the derivative weight are not the
same. More precisely, let $\alpha,\beta,\gamma,\delta>-1$, and assume that
$\mathcal S$ is $\omega_{\gamma,\delta}$-admissible. Define
\begin{equation}\label{Jacobi_Map}
    \mathcal J_{\mathcal S}^{(\alpha,\beta;\gamma,\delta)}
:p\in \Pi_N
\to
\left[
\mathcal M_{\alpha,\beta}(p),
\mathcal D_1^{\omega_{\gamma,\delta}}(p),
\ldots,
\mathcal D_N^{\omega_{\gamma,\delta}}(p)
\right]^\top\in \mathbb{R}^{N+1}.
\end{equation}
Let $D_N^{(\alpha,\beta;\gamma,\delta)}$ be the reduced derivative block
\[
\left[D_N^{(\alpha,\beta;\gamma,\delta)}\right]_{ik}
=
\mathcal D_i^{\omega_{\gamma,\delta}}
\left(P_k^{(\alpha,\beta)}\right)
=
\int_{s_i}
\frac{\dd}{\dd x}P_k^{(\alpha,\beta)}(x)
\omega_{\gamma,\delta}(x)\dd x,
\quad i,k=1,\ldots,N.
\]
Then the matrix representation of
$\mathcal J_{\mathcal S}^{(\alpha,\beta;\gamma,\delta)}$ with respect to the
Jacobi basis
\[
\left\{
P_0^{(\alpha,\beta)},\ldots,P_N^{(\alpha,\beta)}
\right\}
\]
and the canonical basis of $\RR^{N+1}$ is
\[
\begin{bmatrix}
h_0^{(\alpha,\beta)} & \boldsymbol 0^\top\\[2mm]
\boldsymbol 0 & D_N^{(\alpha,\beta;\gamma,\delta)}
\end{bmatrix}.
\]
Indeed, the first row follows from the orthogonality of
$P_1^{(\alpha,\beta)},\ldots,P_N^{(\alpha,\beta)}$ with respect to
$\omega_{\alpha,\beta}$, while the first column below
$h_0^{(\alpha,\beta)}$ vanishes because $P_0^{(\alpha,\beta)}=1$.
Since $\mathcal S$ is $\omega_{\gamma,\delta}$-admissible and
$h_0^{(\alpha,\beta)}=\mathcal M_{\alpha,\beta}(1)>0$, the same argument as in
Proposition~\ref{prop:jacobi-block-form} shows that
$D_N^{(\alpha,\beta;\gamma,\delta)}$ is nonsingular.
\end{remark}

We next derive a useful decomposition of the reduced derivative block. We use
the Jacobi differentiation formula
\begin{equation}\label{eq:jacobi-derivative-formula}
\frac{\dd}{\dd x}P_k^{(\alpha,\beta)}(x)
=
\frac{k+\alpha+\beta+1}{2}
P_{k-1}^{(\alpha+1,\beta+1)}(x),
\quad k\geq1,
\end{equation}
where the Jacobi polynomials are taken with the standard normalization. For
$i,k=1,\ldots,N$, define the two shifted moment matrices
\begin{eqnarray*}
\left[A_N^{(\alpha,\beta)}\right]_{ik}
&=&
\int_{s_i}
P_{k-1}^{(\alpha+1,\beta+1)}(x)
\omega_{\alpha+1,\beta}(x)\dd x, 
\\
\left[B_N^{(\alpha,\beta)}\right]_{ik}
&=&
\int_{s_i}
P_{k-1}^{(\alpha+1,\beta+1)}(x)
\omega_{\alpha,\beta+1}(x)\dd x.
\end{eqnarray*}
Finally, set
\begin{equation*}
C_N^{(\alpha,\beta)}
=
\operatorname{diag}
\left(
\frac{k+\alpha+\beta+1}{2}
\right)_{k=1}^{N}.
\end{equation*}

\begin{theorem}
\label{thm:endpoint-shifted-decomposition}
The reduced Jacobi derivative block satisfies
\begin{equation}\label{eq:endpoint-shifted-decomposition}
D_N^{(\alpha,\beta)}
=
\frac{1}{2}
\left(A_N^{(\alpha,\beta)}+B_N^{(\alpha,\beta)}\right)
C_N^{(\alpha,\beta)}.
\end{equation}
\end{theorem}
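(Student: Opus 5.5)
The proof is an entrywise computation that combines the Jacobi differentiation formula \eqref{eq:jacobi-derivative-formula} with an elementary splitting of the weight $\omega_{\alpha,\beta}$. I will first identify the $(i,k)$ entry of the right-hand side of \eqref{eq:endpoint-shifted-decomposition}. Because $C_N^{(\alpha,\beta)}$ is diagonal, right multiplication by it scales the $k$-th column by $\frac{k+\alpha+\beta+1}{2}$. Hence
\[
\left[\tfrac12\left(A_N^{(\alpha,\beta)}+B_N^{(\alpha,\beta)}\right)C_N^{(\alpha,\beta)}\right]_{ik}
=
\frac{k+\alpha+\beta+1}{2}\cdot\frac12\int_{s_i}P_{k-1}^{(\alpha+1,\beta+1)}(x)\left(\omega_{\alpha+1,\beta}(x)+\omega_{\alpha,\beta+1}(x)\right)\dd x ,
\]
where the two integrals have been combined by linearity.

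The key step is the pointwise identity
\[
\omega_{\alpha+1,\beta}(x)+\omega_{\alpha,\beta+1}(x)=(1-x)^\alpha(1+x)^\beta\left[(1-x)+(1+x)\right]=2\,\omega_{\alpha,\beta}(x),
\quad x\in(-1,1).
\]
With this identity, the factor $\frac12$ cancels the $2$, and the entry becomes $\frac{k+\alpha+\beta+1}{2}\int_{s_i}P_{k-1}^{(\alpha+1,\beta+1)}(x)\,\omega_{\alpha,\beta}(x)\dd x$.

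Next I compute the $(i,k)$ entry of $D_N^{(\alpha,\beta)}$ from \eqref{eq:jacobi-derivative-matrix}. Substituting the differentiation formula \eqref{eq:jacobi-derivative-formula} for $k\ge1$ gives exactly the same expression. Since the entries agree for all $i,k=1,\ldots,N$, the factorization follows.

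There is no real obstacle, but I would note that every integral involved converges. The integrands are polynomials times $\omega_{\alpha+1,\beta}$, $\omega_{\alpha,\beta+1}$ or $\omega_{\alpha,\beta}$, and each of these weights lies in $L^1(-1,1)$ because all exponents exceed $-1$. This justifies splitting the integral into the $A$ and $B$ parts. I would also remark that the standard normalization is essential: the constant in \eqref{eq:jacobi-derivative-formula} is precisely what populates $C_N^{(\alpha,\beta)}$.
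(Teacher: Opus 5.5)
Your proposal is correct and follows the paper's proof: you substitute the Jacobi differentiation formula into the entries of $D_N^{(\alpha,\beta)}$ and use the weight splitting $\omega_{\alpha,\beta}=\frac12\left(\omega_{\alpha+1,\beta}+\omega_{\alpha,\beta+1}\right)$ to compare the two sides entrywise. The only difference is that you work from the right-hand side to the left rather than the other way round, which does not change the argument.
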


\begin{proof}
Substituting~\eqref{eq:jacobi-derivative-formula} into
\eqref{eq:jacobi-derivative-matrix}, we obtain
\begin{equation}\label{eq:jacobi-D-first-step}
\left[D_N^{(\alpha,\beta)}\right]_{ik}
=
\frac{k+\alpha+\beta+1}{2}
\int_{s_i}
P_{k-1}^{(\alpha+1,\beta+1)}(x)
\omega_{\alpha,\beta}(x)\dd x, \quad i,k=1,\ldots,N.
\end{equation}
Moreover, for $x\in(-1,1)$, we have
\[
\omega_{\alpha+1,\beta}(x)
=
(1-x)\omega_{\alpha,\beta}(x),
\quad
\omega_{\alpha,\beta+1}(x)
=
(1+x)\omega_{\alpha,\beta}(x).
\]
Hence
\begin{equation}\label{eq:jacobi-weight-average}
\omega_{\alpha,\beta}(x)
=
\frac{1}{2}
\left(
\omega_{\alpha+1,\beta}(x)
+
\omega_{\alpha,\beta+1}(x)
\right).
\end{equation}
Using~\eqref{eq:jacobi-weight-average} in
\eqref{eq:jacobi-D-first-step}, we get
\begin{eqnarray*}
\left[D_N^{(\alpha,\beta)}\right]_{ik}
&=&
\frac{k+\alpha+\beta+1}{4}
\int_{s_i}
P_{k-1}^{(\alpha+1,\beta+1)}(x)
\omega_{\alpha+1,\beta}(x)\dd x
\\
&+&
\frac{k+\alpha+\beta+1}{4}
\int_{s_i}
P_{k-1}^{(\alpha+1,\beta+1)}(x)
\omega_{\alpha,\beta+1}(x)\dd x
\\
&=&
\frac{1}{2}
\left(
\left[A_N^{(\alpha,\beta)}\right]_{ik}
+
\left[B_N^{(\alpha,\beta)}\right]_{ik}
\right)
\left[C_N^{(\alpha,\beta)}\right]_{kk}.
\end{eqnarray*}
Then, the  factorization~\eqref{eq:endpoint-shifted-decomposition} follows.
\end{proof}

\begin{remark}\label{remarkimps}
The entries of the matrices
$A_N^{(\alpha,\beta)}$ and $B_N^{(\alpha,\beta)}$ can be written in closed
form in terms of incomplete beta functions evaluated at the endpoints of the
intervals. Let
\[
s_i=\left[a_i,b_i\right],
\quad
u_i^-=\frac{1+a_i}{2},
\quad
u_i^+=\frac{1+b_i}{2},
\]
and define
\[
\mathcal B_z(\lambda,\mu)
=
\int_0^z t^{\lambda-1}(1-t)^{\mu-1}\dd t,
\quad
\lambda,\mu>0.
\]
We shall use the classical expansion of the Jacobi polynomials
\[
P_n^{(a,b)}(x)
=
\sum_{m=0}^{n}
\binom{n+a}{n-m}
\binom{n+b}{m}
\left(\frac{x+1}{2}\right)^{n-m}
\left(\frac{x-1}{2}\right)^m ,
\]
where the binomial coefficients are understood in the generalized sense~\cite{Mastroianni:2008:IPS}. In
particular, taking $a=\alpha+1$, $b=\beta+1$ and $x=2u-1$, we obtain
\[
P_n^{(\alpha+1,\beta+1)}(2u-1)
=
\sum_{m=0}^{n}
(-1)^m
\binom{n+\alpha+1}{n-m}
\binom{n+\beta+1}{m}
u^{n-m}(1-u)^m .
\]
Hence, for $n=k-1$, we have
\begin{eqnarray*}
\left[A_N^{(\alpha,\beta)}\right]_{ik}
&=&
2^{\alpha+\beta+2}
\sum_{m=0}^{n}
(-1)^m
\binom{n+\alpha+1}{n-m}
\binom{n+\beta+1}{m}  \\
&&\times
\left[
\mathcal B_{u_i^+}
\left(n-m+\beta+1,m+\alpha+2\right)
-
\mathcal B_{u_i^-}
\left(n-m+\beta+1,m+\alpha+2\right)
\right],
\end{eqnarray*}
and
\begin{eqnarray*}
\left[B_N^{(\alpha,\beta)}\right]_{ik}
&=&
2^{\alpha+\beta+2}
\sum_{m=0}^{n}
(-1)^m
\binom{n+\alpha+1}{n-m}
\binom{n+\beta+1}{m}  \\
&&\times
\left[
\mathcal B_{u_i^+}
\left(n-m+\beta+2,m+\alpha+1\right)
-
\mathcal B_{u_i^-}
\left(n-m+\beta+2,m+\alpha+1\right)
\right].
\end{eqnarray*}
Therefore the decomposition~\eqref{eq:endpoint-shifted-decomposition} can be
evaluated using only endpoint quantities. In particular, for the four
Chebyshev choices
\[
(\alpha,\beta)\in
\left\{
\left(-\frac{1}{2},-\frac{1}{2}\right),
\left(\frac{1}{2},\frac{1}{2}\right),
\left(-\frac{1}{2},\frac{1}{2}\right),
\left(\frac{1}{2},-\frac{1}{2}\right)
\right\},
\]
all the parameters which occur in the incomplete beta functions are integers
or half-integers. Hence the above expressions reduce to elementary endpoint
formulae.
\end{remark}

\subsection{A Chebyshev realization with exact diagonalization}
\label{sec:chebyshev}

We present a configuration in which the reduced derivative matrix admits an
explicit spectral factorization. We assume that the derivative moments are
unweighted, namely
\[
\omega(x)=1.
\]
The trial space is represented in the first-kind Chebyshev basis
\[
\left\{
T_k(\cos\theta)=\cos(k\theta)\,:\, k=0,\ldots,N
\right\}.
\]
As a scalar normalization, we consider
\begin{equation}\label{eq:chebyshev-normalization}
\mathcal M_{\mathrm C}(f)
=
\frac{1}{\pi}
\int_{-1}^{1}
\frac{f(x)}{\sqrt{1-x^2}}\dd x,
\quad f\in C([-1,1]).
\end{equation}
Then, by the orthogonality of the first-kind Chebyshev polynomials, we have
\begin{equation}\label{supimp}
\mathcal M_{\mathrm C}\left(T_0\right)=1,
\quad
\mathcal M_{\mathrm C}\left(T_k\right)=0,
\quad k=1,\ldots,N.
\end{equation}
In particular,
\begin{equation*}
\mathcal M_{\mathrm C}(1)=1.
\end{equation*}
Let
\begin{equation}\label{eq:chebyshev-angles}
\theta_j
=
\frac{(2j+1)\pi}{2(N+1)},
\quad j=0,\ldots,N,
\end{equation}
and set
\begin{equation}\label{eq:chebyshev-nodes}
x_j=\cos\theta_j,
\quad j=0,\ldots,N.
\end{equation}
These are the $N+1$ zeros of $T_{N+1}$, ordered as
\[
x_0>x_1>\cdots>x_N.
\]
We define the $N$ cells
\begin{equation}\label{eq:chebyshev-cells}
s_i=\left[x_i,x_{i-1}\right],
\quad i=1,\ldots,N.
\end{equation}
Hence, up to the ordering of the nodes, Proposition~\ref{prop:elementary-intervals-admissible} shows that the family
\[
\mathcal S_{\mathrm C}
=
\left\{s_1,\ldots,s_N\right\}
\]
is $1$-admissible. Since $\mathcal M_{\mathrm C}(1)=1$, Theorem~\ref{thm:admissible-unisolvence}
implies that the set of degrees of freedom
\[
\left\{
\mathcal M_{\mathrm C},
\mathcal D_1^{1},\ldots,\mathcal D_N^{1}
\right\}
\]
is unisolvent on $\Pi_N$. In this case, the corresponding reduced derivative matrix
$D_N\in\RR^{N\times N}$ is defined by
\begin{equation*}
\left[D_N\right]_{ik}
=
\mathcal D_i^{1}\left(T_k\right)
=
\int_{s_i}T_k'(x)\dd x,
\quad i,k=1,\ldots,N.
\end{equation*}
The following theorem shows that, for the cells defined in~\eqref{eq:chebyshev-cells},
the reduced derivative matrix admits an exact factorization through the
discrete sine transform.

\begin{theorem}
\label{thm:chebyshev-factorization}
Let $S_N\in\RR^{N\times N}$ be defined by
\begin{equation*}
\left[S_N\right]_{ik}
=
\sin\left(\frac{ik\pi}{N+1}\right),
\quad i,k=1,\ldots,N,
\end{equation*}
and let
\begin{equation*}
\Sigma_N
=
\operatorname{diag}
\left(
2\sin\left(\frac{k\pi}{2(N+1)}\right)
\right)_{k=1}^{N}.
\end{equation*}
Then
\begin{equation}\label{eq:chebyshev-factorization}
D_N=S_N\Sigma_N.
\end{equation}
Consequently,
\begin{equation}\label{eq:chebyshev-gram-matrix}
D_N^\top D_N
=
\frac{N+1}{2}\Sigma_N^2.
\end{equation}
\end{theorem}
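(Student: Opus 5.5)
The proof is a direct entrywise computation followed by the classical discrete orthogonality of the sine matrix. First I will evaluate the entries of $D_N$. Since $\omega=1$, the fundamental theorem of calculus gives, as in the proof of Proposition~\ref{prop:lagrange-special-case},
\[
\left[D_N\right]_{ik}=\int_{x_i}^{x_{i-1}}T_k'(x)\dd x=T_k\left(x_{i-1}\right)-T_k\left(x_i\right)=\cos\left(k\theta_{i-1}\right)-\cos\left(k\theta_i\right).
\]
Using \eqref{eq:chebyshev-angles}, we have $\theta_{i-1}=\frac{(2i-1)\pi}{2(N+1)}$ and $\theta_i=\frac{(2i+1)\pi}{2(N+1)}$. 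Hence
\[
\frac{\theta_{i-1}+\theta_i}{2}=\frac{i\pi}{N+1},
\quad
\frac{\theta_i-\theta_{i-1}}{2}=\frac{\pi}{2(N+1)}.
\]
The identity $\cos A-\cos B=2\sin\frac{A+B}{2}\sin\frac{B-A}{2}$, applied with $A=k\theta_{i-1}$ and $B=k\theta_i$, then gives
\[
\left[D_N\right]_{ik}=2\sin\left(\frac{ik\pi}{N+1}\right)\sin\left(\frac{k\pi}{2(N+1)}\right)=\left[S_N\right]_{ik}\left[\Sigma_N\right]_{kk}.
\]
This is exactly \eqref{eq:chebyshev-factorization}, since right multiplication by a diagonal matrix scales the columns.

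For \eqref{eq:chebyshev-gram-matrix}, I will write $D_N^\top D_N=\Sigma_N S_N^\top S_N\Sigma_N$ and reduce everything to the identity $S_N^\top S_N=\frac{N+1}{2}I_N$. This identity holds because $S_N$ is symmetric, so $S_N^\top S_N=S_N^2$, and each entry can be computed directly. Take $k,l\in\{1,\ldots,N\}$. The product-to-sum formula gives
\[
\sum_{i=1}^{N}\sin\left(\frac{ik\pi}{N+1}\right)\sin\left(\frac{il\pi}{N+1}\right)=\frac12\sum_{i=1}^{N}\left[\cos\left(\frac{i(k-l)\pi}{N+1}\right)-\cos\left(\frac{i(k+l)\pi}{N+1}\right)\right].
\]
So the computation reduces to evaluating $\sigma(m)=\sum_{i=1}^{N}\cos\left(\frac{im\pi}{N+1}\right)$ for integers $0\le |m|\le 2N$.

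The main technical step is this cosine sum. I will extend the sum to $i=0,\ldots,2N+1$, where for $m\not\equiv0 \pmod{2(N+1)}$ the geometric series of $e^{\mathrm{i}im\pi/(N+1)}$ vanishes. The terms with indices $i$ and $2(N+1)-i$ are equal, and the term at $i=N+1$ equals $(-1)^m$. This yields $1+2\sigma(m)+(-1)^m=0$, that is,
\[
\sigma(m)=-\frac{1+(-1)^m}{2}\quad (m\neq0),
\qquad
\sigma(0)=N.
\]
Since $k-l$ and $k+l$ have the same parity, and $k+l\in[2,2N]$ is never zero, the two sums cancel when $k\neq l$. When $k=l$, the diagonal entry is $\frac12\left(N-\sigma(2k)\right)=\frac12(N+1)$. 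Therefore $S_N^\top S_N=\frac{N+1}{2}I_N$.

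Inserting this into $D_N^\top D_N=\Sigma_N S_N^\top S_N\Sigma_N$ gives $D_N^\top D_N=\frac{N+1}{2}\Sigma_N^2$, which is \eqref{eq:chebyshev-gram-matrix}.
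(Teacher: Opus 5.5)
Your proposal is correct and follows the same route as the paper: the fundamental theorem of calculus and the $\cos A-\cos B$ identity give $D_N=S_N\Sigma_N$ entrywise, and the Gram identity follows from $S_N^\top S_N=\frac{N+1}{2}I_N$. The only difference is that you prove this discrete sine orthogonality yourself, via the product-to-sum formula and the geometric-series evaluation of the cosine sums (including the parity argument for $k\neq l$), whereas the paper simply cites it as a standard relation.
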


\begin{proof}
For any $i,k=1,\ldots,N$, by the fundamental theorem of calculus and by
the definition of the cells in~\eqref{eq:chebyshev-cells}, we have
\begin{equation*}
\left[D_N\right]_{ik}
=
\int_{s_i}T_k'(x)\dd x
=
T_k\left(x_{i-1}\right)-T_k\left(x_i\right).
\end{equation*}
Using~\eqref{eq:chebyshev-angles} and~\eqref{eq:chebyshev-nodes}, we obtain
\begin{eqnarray*}
\left[D_N\right]_{ik}
&=&
T_k\left(\cos \theta_{i-1}\right)-T_k\left(\cos \theta_i\right)
\\
&=&
\cos\left(k\theta_{i-1}\right)-\cos\left(k\theta_i\right)
\\
&=&
2
\sin\left(\frac{ik\pi}{N+1}\right)
\sin\left(\frac{k\pi}{2(N+1)}\right)
\\
&=&
\left[S_N\right]_{ik}
\left[\Sigma_N\right]_{kk}.
\end{eqnarray*}
Since this identity holds for any $i,k=1,\ldots,N$,
\eqref{eq:chebyshev-factorization} follows.

By the discrete sine orthogonality relation~\cite[Chapter 4]{Mason:2002:CP}
\begin{equation*}
S_N^\top S_N
=
\frac{N+1}{2}I_N,
\end{equation*}
we get
\begin{equation*}
D_N^\top D_N
=
\left(S_N\Sigma_N\right)^\top S_N\Sigma_N=
\Sigma_N S_N^\top S_N\Sigma_N=
\frac{N+1}{2}\Sigma_N^2.
\end{equation*}
\end{proof}

\begin{corollary}\label{cor:chebyshev-spectrum-condition-number}
The singular values of $D_N$ are
\begin{equation}\label{eq:chebyshev-singular-values}
\sigma_k\left(D_N\right)
=
\sqrt{2(N+1)}
\sin\left(\frac{k\pi}{2(N+1)}\right),
\quad k=1,\ldots,N.
\end{equation}
Consequently, the spectral condition number of $D_N$ is
\begin{equation}\label{eq:chebyshev-condition-number}
\kappa_2\left(D_N\right)
=
\cot\left(\frac{\pi}{2(N+1)}\right).
\end{equation}
\end{corollary}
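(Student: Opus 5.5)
The plan is to read the singular values directly off the Gram identity \eqref{eq:chebyshev-gram-matrix} of Theorem~\ref{thm:chebyshev-factorization}. The squared singular values of $D_N$ are the eigenvalues of $D_N^\top D_N$. By \eqref{eq:chebyshev-gram-matrix},
\[
D_N^\top D_N=\frac{N+1}{2}\Sigma_N^2,
\]
and this matrix is diagonal. Its eigenvalues are therefore its diagonal entries
\[
\frac{N+1}{2}\cdot 4\sin^2\left(\frac{k\pi}{2(N+1)}\right)=2(N+1)\sin^2\left(\frac{k\pi}{2(N+1)}\right),\quad k=1,\ldots,N.
\]
For $k=1,\ldots,N$ the angle $k\pi/(2(N+1))$ lies in $(0,\pi/2)$, so the sine is positive. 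Taking nonnegative square roots then gives \eqref{eq:chebyshev-singular-values}.

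An equivalent route goes through the factorization \eqref{eq:chebyshev-factorization}. Set $Q=\sqrt{2/(N+1)}\,S_N$, which is orthogonal by the discrete sine relation $S_N^\top S_N=\frac{N+1}{2}I_N$ quoted in the proof of the theorem. Then
\[
D_N=Q\left(\sqrt{\tfrac{N+1}{2}}\,\Sigma_N\right)I_N
\]
is an explicit SVD, and it yields the same values.

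For the condition number, I would note that $x\mapsto\sin x$ is strictly increasing on $(0,\pi/2)$. Hence $\sigma_{\max}$ is attained at $k=N$ and $\sigma_{\min}$ at $k=1$, and both are positive, which confirms that $D_N$ is nonsingular. Thus
\[
\kappa_2(D_N)=\frac{\sin\left(\frac{N\pi}{2(N+1)}\right)}{\sin\left(\frac{\pi}{2(N+1)}\right)}.
\]
Next I rewrite the numerator as $\frac{N\pi}{2(N+1)}=\frac{\pi}{2}-\frac{\pi}{2(N+1)}$. This gives $\sin\left(\frac{N\pi}{2(N+1)}\right)=\cos\left(\frac{\pi}{2(N+1)}\right)$, and the ratio becomes the cotangent in \eqref{eq:chebyshev-condition-number}.

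There is no real obstacle here. The only points needing care are checking that all the arguments stay inside $(0,\pi/2)$, so that the square roots need no absolute values and the ordering of the singular values is clear, and using the complementary-angle identity in the last step.
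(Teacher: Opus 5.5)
Your proposal is correct and follows essentially the same argument as the paper: you read the eigenvalues off the diagonal Gram matrix $\frac{N+1}{2}\Sigma_N^2$, take square roots, locate $\sigma_{\min}$ and $\sigma_{\max}$ by monotonicity of $\sin$ on $(0,\pi/2)$, and finish with the complementary-angle identity. Your alternative route, writing $D_N=Q\bigl(\sqrt{\tfrac{N+1}{2}}\,\Sigma_N\bigr)I_N$ with $Q=\sqrt{2/(N+1)}\,S_N$ orthogonal, is a valid explicit SVD and yields the same values.
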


\begin{proof}
By~\eqref{eq:chebyshev-gram-matrix}, the matrix $D_N^\top D_N$ is diagonal,
and its $k$-th diagonal entry is
\begin{equation*}
\frac{N+1}{2}
\left[
2\sin\left(\frac{k\pi}{2(N+1)}\right)
\right]^2
=
2(N+1)
\sin^2\left(\frac{k\pi}{2(N+1)}\right).
\end{equation*}
Thus the eigenvalues of $D_N^\top D_N$ are
\[
2(N+1)
\sin^2\left(\frac{k\pi}{2(N+1)}\right),
\quad k=1,\ldots,N.
\]
Taking square roots gives~\eqref{eq:chebyshev-singular-values}.

Since the sine function is strictly increasing on $(0,\pi/2)$, the smallest
and greatest singular values are respectively
\[
\sigma_{\min}\left(D_N\right)
=
\sqrt{2(N+1)}
\sin\left(\frac{\pi}{2(N+1)}\right)
\]
and
\[
\sigma_{\max}\left(D_N\right)
=
\sqrt{2(N+1)}
\sin\left(\frac{N\pi}{2(N+1)}\right).
\]
Therefore,
\begin{equation*}
\kappa_2\left(D_N\right)
=
\frac{\sigma_{\max}\left(D_N\right)}{\sigma_{\min}\left(D_N\right)}
=
\frac{
\sin\left(\dfrac{N\pi}{2(N+1)}\right)
}{
\sin\left(\dfrac{\pi}{2(N+1)}\right)
}.
\end{equation*}
Since
\[
\frac{N\pi}{2(N+1)}
=
\frac{\pi}{2}
-
\frac{\pi}{2(N+1)},
\]
we have
\[
\sin\left(\frac{N\pi}{2(N+1)}\right)
=
\cos\left(\frac{\pi}{2(N+1)}\right).
\]
Hence
\[
\kappa_2\left(D_N\right)
=
\frac{
\cos\left(\dfrac{\pi}{2(N+1)}\right)
}{
\sin\left(\dfrac{\pi}{2(N+1)}\right)
}
=
\cot\left(\frac{\pi}{2(N+1)}\right),
\]
which proves~\eqref{eq:chebyshev-condition-number}.
\end{proof}

The factorization above relies on the elementary identity
\[
\int_u^v p'(x)\dd x
=
p(v)-p(u),
\quad [u,v]\subset(-1,1).
\]
The next proposition shows that, within the Jacobi family, this property characterizes the unweighted case.

\begin{proposition}
\label{prop:telescoping-jacobi-weight}
Let $\alpha,\beta>-1$. Assume that there exists a constant $c\neq0$ such that
\begin{equation}\label{eq:telescoping-property}
\int_u^v
p'(x)\omega_{\alpha,\beta}(x)\dd x
=
c\left(p(v)-p(u)\right), \quad [u,v]\subset(-1,1),
\end{equation}
for any polynomial $p$. Then
\[
\alpha=\beta=0
\quad\text{and}\quad
c=1.
\]
\end{proposition}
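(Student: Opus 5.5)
Apply the identity \eqref{eq:telescoping-property} to the simplest nonconstant polynomial, $p(x)=x$, so that $p'=1$. For every $[u,v]\subset(-1,1)$ this gives
\[
\int_u^v \omega_{\alpha,\beta}(x)\dd x = c\,(v-u).
\]
Fix $u$, say $u=0$, and view both sides as functions of $v\in(-1,1)$. The left-hand side is differentiable in $v$, because $\omega_{\alpha,\beta}$ is continuous on the open interval $(-1,1)$. Differentiating with respect to $v$ (fundamental theorem of calculus) yields
\[
\omega_{\alpha,\beta}(v)=c \quad\text{for all } v\in(-1,1).
\]
So the Jacobi weight must be constant on $(-1,1)$.

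Next we extract $\alpha$, $\beta$ and $c$ from this constancy. Evaluating at $v=0$ gives $\omega_{\alpha,\beta}(0)=1$, hence $c=1$. Then $(1-v)^\alpha(1+v)^\beta\equiv1$ on $(-1,1)$. Taking logarithms gives
\[
\alpha\log(1-v)+\beta\log(1+v)=0 .
\]
Differentiating in $v$ gives
\[
-\frac{\alpha}{1-v}+\frac{\beta}{1+v}=0 \quad\text{for all } v.
\]
At $v=0$ this gives $\beta=\alpha$. Substituting back gives $\alpha\,\frac{-2v}{1-v^2}=0$ for all $v$, so at, say, $v=1/2$ we get $\alpha=0$. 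Hence $\alpha=\beta=0$.

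There is no real obstacle here; the only point needing care is justifying the differentiation step. Continuity of $\omega_{\alpha,\beta}$ on the open interval makes this legitimate. Alternatively, one can avoid derivatives by observing that a continuous function whose integral over every subinterval equals $c$ times the interval's length must equal $c$ pointwise. The hypothesis $c\neq0$ is not even needed, since $c$ is forced to equal $\omega_{\alpha,\beta}(0)=1$. Only the test polynomial $p(x)=x$ is used, so the stated hypothesis (for all polynomials) is more than enough.
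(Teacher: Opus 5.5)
Your proof is correct and follows essentially the same route as the paper. Both arguments test with $p(x)=x$, deduce that $\omega_{\alpha,\beta}\equiv c$ on $(-1,1)$, and use the logarithmic derivative to force $\alpha=\beta=0$. The only differences are cosmetic: you get pointwise constancy by differentiating in $v$ instead of arguing almost everywhere and then using continuity, and you read off $c=1$ from $\omega_{\alpha,\beta}(0)=1$ before, rather than after, identifying $\alpha$ and $\beta$.
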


\begin{proof}
Choosing $p(x)=x$ in~\eqref{eq:telescoping-property}, we obtain
\begin{equation}\label{eq:weight-interval-average}
\int_u^v\omega_{\alpha,\beta}(x)\dd x
=
c(v-u),
\quad [u,v]\subset(-1,1).
\end{equation}
Equivalently,
\[
\int_u^v\left(\omega_{\alpha,\beta}(x)-c\right)\dd x
=
0
\]
for any interval $[u,v]\subset(-1,1)$. Hence
\[
\omega_{\alpha,\beta}(x)=c
\quad\text{for almost every }x\in(-1,1).
\]
Since $\omega_{\alpha,\beta}$ is continuous on $(-1,1)$, it follows that
\[
\omega_{\alpha,\beta}(x)=c,
\quad x\in(-1,1).
\]
Then $\omega_{\alpha,\beta}'=0$. Therefore, by~\eqref{eq:jacobi-weight},
\[
\frac{\omega_{\alpha,\beta}'(x)}
{\omega_{\alpha,\beta}(x)}
=
-\frac{\alpha}{1-x}
+
\frac{\beta}{1+x}
=
\frac{-(\alpha+\beta)x+\beta-\alpha}{(1-x)(1+x)}
\]
vanishes identically on $(-1,1)$. Thus
\[
\alpha+\beta=0,
\quad
\beta-\alpha=0,
\]
and consequently
\[
\alpha=\beta=0.
\]
Since $\omega_{0,0}(x)=1$, equation~\eqref{eq:weight-interval-average} gives
$c=1$.
\end{proof}

\begin{remark}
In Proposition~\ref{prop:telescoping-jacobi-weight}, it is not necessary to
verify the identity for all polynomials. It is enough to test the single
linear polynomial $p(x)=x$. Indeed, this gives
\[
\int_u^v \omega_{\alpha,\beta}(x)\dd x=c(v-u)
\]
for every subinterval $[u,v]\subset(-1,1)$, and therefore
$\omega_{\alpha,\beta}$ must be constant almost everywhere. Hence, in the
Jacobi family, weighted derivative moments can reduce to endpoint differences
only for the constant weight $\omega_{0,0}=1$. This is the reason why the
Chebyshev factorizations obtained above are tied to the unweighted case.
\end{remark}

\subsection{Derivative histopolation and Chebyshev--Gauss interpolation}
\label{subsec:chebyshev-gauss-interpolation}

Let the nodes $x_j$ be defined by~\eqref{eq:chebyshev-nodes}. We denote by
\[
I_N^{\mathrm G}:C([-1,1])\to\Pi_N
\]
the Chebyshev--Gauss interpolation operator associated with
$x_0,\ldots,x_N$. Thus, for any $f\in C([-1,1])$, we have
\begin{equation}\label{eq:cheb-gauss-interpolation-data}
I_N^{\mathrm G}[f]\left(x_j\right)=f\left(x_j\right),
\quad j=0,\ldots,N.
\end{equation}
We also introduce the discrete Chebyshev--Gauss mean
\begin{equation*}
Q_N^{\mathrm C}(f)
=
\frac{1}{N+1}
\sum_{j=0}^{N}f\left(x_j\right).
\end{equation*}
Let
\[
K_N^{\mathrm C}:C^1([-1,1])\to\Pi_N
\]
be the derivative histopolation operator associated with the Chebyshev mean
$\mathcal M_{\mathrm C}$ defined in~\eqref{eq:chebyshev-normalization}. Thus
$K_N^{\mathrm C}[f]$ is the unique polynomial in $\Pi_N$ satisfying
\begin{equation}\label{eq:cheb-histopolant-mean}
\mathcal M_{\mathrm C}\left(K_N^{\mathrm C}[f]\right)
=
\mathcal M_{\mathrm C}(f)
\end{equation}
and
\begin{equation}\label{eq:cheb-histopolant-derivative-moments}
\int_{s_i}
\left(K_N^{\mathrm C}[f]\right)'(x)\dd x
=
\int_{s_i}f'(x)\dd x,
\quad i=1,\ldots,N.
\end{equation}
The following result shows that, in this Chebyshev configuration, derivative
histopolation differs from Chebyshev--Gauss interpolation only by an additive
constant.

\begin{proposition}\label{prop:cheb-gauss-interpolation-formula}
For any $f\in C^1([-1,1])$, we have
\begin{equation}\label{eq:cheb-histopolation-gauss-interpolation}
K_N^{\mathrm C}[f]
=
I_N^{\mathrm G}[f]
+
\left(
\mathcal M_{\mathrm C}(f)-Q_N^{\mathrm C}(f)
\right).
\end{equation}
\end{proposition}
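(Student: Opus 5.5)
The approach is to use the unisolvence of $\{\mathcal M_{\mathrm C},\mathcal D_1^1,\ldots,\mathcal D_N^1\}$ on $\Pi_N$ (Theorem~\ref{thm:admissible-unisolvence}, applicable since $\mathcal S_{\mathrm C}$ is $1$-admissible and $\mathcal M_{\mathrm C}(1)=1$). I will check that the right-hand side
\[
p=I_N^{\mathrm G}[f]+\left(\mathcal M_{\mathrm C}(f)-Q_N^{\mathrm C}(f)\right)
\]
is a polynomial in $\Pi_N$ satisfying \eqref{eq:cheb-histopolant-mean} and \eqref{eq:cheb-histopolant-derivative-moments}. Uniqueness then forces $p=K_N^{\mathrm C}[f]$. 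Clearly $p\in\Pi_N$, since it is an interpolant plus a constant.

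\emph{Derivative moments.} Since $\omega=1$ and $s_i=[x_i,x_{i-1}]$, the fundamental theorem of calculus gives
\[
\int_{s_i}p'(x)\dd x=p(x_{i-1})-p(x_i)=I_N^{\mathrm G}[f](x_{i-1})-I_N^{\mathrm G}[f](x_i).
\]
The additive constant cancels. By \eqref{eq:cheb-gauss-interpolation-data} this equals $f(x_{i-1})-f(x_i)=\int_{s_i}f'(x)\dd x$, exactly as in the proof of Proposition~\ref{prop:lagrange-special-case}.

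\emph{Normalization.} By linearity and $\mathcal M_{\mathrm C}(1)=1$,
\[
\mathcal M_{\mathrm C}(p)=\mathcal M_{\mathrm C}\left(I_N^{\mathrm G}[f]\right)+\mathcal M_{\mathrm C}(f)-Q_N^{\mathrm C}(f).
\]
It therefore suffices to show $\mathcal M_{\mathrm C}(I_N^{\mathrm G}[f])=Q_N^{\mathrm C}(f)$. This is the one nontrivial step. The plan is to invoke exactness of the $(N+1)$-point Chebyshev--Gauss quadrature: for every $q\in\Pi_{2N+1}$,
\[
\frac1\pi\int_{-1}^1\frac{q(x)}{\sqrt{1-x^2}}\dd x=\frac{1}{N+1}\sum_{j=0}^N q(x_j).
\]
Applying it to $q=I_N^{\mathrm G}[f]\in\Pi_N$ gives
\[
\mathcal M_{\mathrm C}(I_N^{\mathrm G}[f])=\frac1{N+1}\sum_j I_N^{\mathrm G}[f](x_j)=Q_N^{\mathrm C}(f).
\]

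To keep the argument self-contained, only exactness on $\Pi_N$ is needed, and it can be checked on the basis $T_k$. For $k=0$ both sides equal $1$, by \eqref{supimp}. For $1\le k\le N$, the left side vanishes by \eqref{supimp}. The right side is
\[
\frac1{N+1}\sum_{j=0}^N\cos\left(\frac{k(2j+1)\pi}{2(N+1)}\right)
=\frac1{N+1}\,\mathrm{Re}\left(e^{ik\pi/(2(N+1))}\sum_{j=0}^N e^{ijk\pi/(N+1)}\right).
\]
The geometric sum equals $(1-(-1)^k)/(1-e^{ik\pi/(N+1)})$, and $e^{ik\pi/(N+1)}\neq1$ because $0<k<N+1$. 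For even $k$ it is zero. For odd $k$ the whole expression is $2/(e^{-ik\pi/(2(N+1))}-e^{ik\pi/(2(N+1))})$, which is purely imaginary, so its real part vanishes. This discrete-orthogonality computation is the only place where the specific Gauss nodes \eqref{eq:chebyshev-angles} are used.

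With both conditions verified, uniqueness yields \eqref{eq:cheb-histopolation-gauss-interpolation}.
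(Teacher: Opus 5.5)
Your proof is correct and rests on the same key fact as the paper's: $\mathcal M_{\mathrm C}$ and $Q_N^{\mathrm C}$ agree on $\Pi_N$ because of the discrete orthogonality $\sum_{j=0}^{N}T_k(x_j)=0$ for $1\le k\le N$. The differences are cosmetic. You verify that the candidate satisfies the defining conditions and then invoke unisolvence, whereas the paper starts from $K_N^{\mathrm C}[f]$, shows that $K_N^{\mathrm C}[f]-I_N^{\mathrm G}[f]$ is a constant $c$ and then identifies $c$; you also prove the orthogonality with a complex geometric sum rather than the telescoping sine identity.
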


\begin{proof}
By~\eqref{eq:cheb-histopolant-derivative-moments} and by the fundamental
theorem of calculus, for $i=1,\ldots,N$, we have
\begin{eqnarray*}
K_N^{\mathrm C}[f]\left(x_{i-1}\right)
-
K_N^{\mathrm C}[f]\left(x_i\right)
&=&
\int_{s_i}
\left(K_N^{\mathrm C}[f]\right)'(x)\dd x
\\
&=&
\int_{s_i}f'(x)\dd x
=
f\left(x_{i-1}\right)-f\left(x_i\right).
\end{eqnarray*}
Hence
\[
K_N^{\mathrm C}[f]\left(x_{i-1}\right)-f\left(x_{i-1}\right)
=
K_N^{\mathrm C}[f]\left(x_i\right)-f\left(x_i\right),
\quad i=1,\ldots,N.
\]
Therefore, there exists $c\in\RR$ such that
\[
K_N^{\mathrm C}[f]\left(x_j\right)-f\left(x_j\right)=c,
\quad j=0,\ldots,N.
\]
Since
$I_N^{\mathrm G}[f]\left(x_j\right)=f\left(x_j\right)$, it follows that
\[
K_N^{\mathrm C}[f]\left(x_j\right)-I_N^{\mathrm G}[f]\left(x_j\right)=c,
\quad j=0,\ldots,N.
\]
The polynomial
\[
K_N^{\mathrm C}[f]-I_N^{\mathrm G}[f]-c\in \Pi_N
\]
vanishes at the $N+1$ distinct nodes $x_0,\ldots,x_N$. Hence
\begin{equation}\label{eq:cheb-histopolant-interpolant-constant}
K_N^{\mathrm C}[f]
=
I_N^{\mathrm G}[f]+c.
\end{equation}
It remains to identify $c$. For any $q\in\Pi_N$, writing
\begin{equation}\label{eq:cheb-expansion-q}
q=\sum_{k=0}^{N}a_kT_k,
\end{equation}
we obtain, by~\eqref{supimp},
\begin{equation}\label{unascc}
    \mathcal M_{\mathrm C}(q)=a_0.
\end{equation}
On the other hand, the Chebyshev--Gauss nodes satisfy the discrete
orthogonality relation
\begin{equation}\label{eq:cheb-gauss-discrete-orthogonality}
\frac{1}{N+1}
\sum_{j=0}^{N}T_k\left(x_j\right)
=
\begin{cases}
1, & k=0,\\
0, & k=1,\ldots,N.
\end{cases}
\end{equation}
Indeed, writing
\[
h=\frac{\pi}{N+1},
\quad
\theta_j=\left(j+\frac{1}{2}\right)h,
\]
we have $x_j=\cos\theta_j$ and hence
\[
T_k\left(x_j\right)=\cos\left(k\theta_j\right).
\]
For $k=1,\ldots,N$, the elementary identity
\[
2\sin\left(\frac{kh}{2}\right)\cos\left(k\theta_j\right)
=
\sin(k(j+1)h)-\sin(kjh)
\]
gives, after summing over $j=0,\ldots,N$,
\begin{eqnarray*}
2\sin\left(\frac{kh}{2}\right)
\sum_{j=0}^{N}\cos\left(k\theta_j\right)
&=&
\sum_{j=0}^{N}
\left[
\sin(k(j+1)h)-\sin(kjh)
\right]
\\
&=&
\sin(k(N+1)h)-\sin(0)
\\
&=&
\sin(k\pi)
=
0.
\end{eqnarray*}
Since $\sin(kh/2)\neq0$ for $k=1,\ldots,N$, it follows that
\[
\sum_{j=0}^{N}\cos\left(k\theta_j\right)=0,
\quad k=1,\ldots,N.
\]
This proves~\eqref{eq:cheb-gauss-discrete-orthogonality}, the case $k=0$
being immediate from $T_0=1$.

Hence, by~\eqref{eq:cheb-expansion-q} and
\eqref{eq:cheb-gauss-discrete-orthogonality}, we have
\begin{eqnarray}
Q_N^{\mathrm C}(q)
&=&
\frac{1}{N+1}
\sum_{j=0}^{N}q\left(x_j\right)\notag
\\
&=&
\frac{1}{N+1}
\sum_{j=0}^{N}
\left(
\sum_{k=0}^{N}a_kT_k\left(x_j\right)
\right)\notag
\\
&=&
\sum_{k=0}^{N}a_k
\left(
\frac{1}{N+1}
\sum_{j=0}^{N}T_k\left(x_j\right)
\right)
=
a_0.\label{unascc1}
\end{eqnarray}
Thus, combining~\eqref{unascc} and~\eqref{unascc1}, we get
\begin{equation}\label{eq:cheb-mean-exactness}
\mathcal M_{\mathrm C}(q)=Q_N^{\mathrm C}(q),
\quad q\in\Pi_N.
\end{equation}
Applying $\mathcal M_{\mathrm C}$ to~\eqref{eq:cheb-histopolant-interpolant-constant}
and using~\eqref{eq:cheb-histopolant-mean} gives
\begin{equation}\label{consa}
    \mathcal M_{\mathrm C}(f)
=
\mathcal M_{\mathrm C}\left(I_N^{\mathrm G}[f]\right)+c.
\end{equation}
Since $I_N^{\mathrm G}[f]\in\Pi_N$, by~\eqref{eq:cheb-mean-exactness} and by
the interpolation conditions~\eqref{eq:cheb-gauss-interpolation-data}, we obtain
\begin{eqnarray*}
\mathcal M_{\mathrm C}\left(I_N^{\mathrm G}[f]\right)
&=&
Q_N^{\mathrm C}\left(I_N^{\mathrm G}[f]\right)
\\
&=&
\frac{1}{N+1}
\sum_{j=0}^{N}I_N^{\mathrm G}[f]\left(x_j\right)
\\
&=&
\frac{1}{N+1}
\sum_{j=0}^{N}f\left(x_j\right)
=
Q_N^{\mathrm C}(f).
\end{eqnarray*}
Therefore, by~\eqref{consa}, we have
\[
c=\mathcal M_{\mathrm C}(f)-Q_N^{\mathrm C}(f),
\]
which proves~\eqref{eq:cheb-histopolation-gauss-interpolation}.
\end{proof}

As a consequence of Proposition~\ref{prop:cheb-gauss-interpolation-formula},
the Chebyshev expansion of the derivative histopolant can be written in closed
form.

\begin{corollary}
\label{cor:cheb-gauss-coefficients}
Let $f\in C^1([-1,1])$. Then
\[
K_N^{\mathrm C}[f](x)
=
c_0^{(N)}
+
\sum_{k=1}^{N}c_k^{(N)}T_k(x),
\]
where
\begin{equation}\label{eq:cheb-histopolant-coefficients}
c_0^{(N)}
=
\mathcal M_{\mathrm C}(f),
\quad
c_k^{(N)}
=
\frac{2}{N+1}
\sum_{j=0}^{N}f\left(x_j\right)T_k\left(x_j\right),
\quad k=1,\ldots,N.
\end{equation}
\end{corollary}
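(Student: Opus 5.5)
The plan is to read off the coefficients from the representation in Proposition~\ref{prop:cheb-gauss-interpolation-formula}. We have
\[
K_N^{\mathrm C}[f]=I_N^{\mathrm G}[f]+\left(\mathcal M_{\mathrm C}(f)-Q_N^{\mathrm C}(f)\right).
\]
The second term is a constant, so it only affects the coefficient of $T_0$. Hence the Chebyshev coefficients of $K_N^{\mathrm C}[f]$ for $k\geq1$ coincide with those of the Chebyshev--Gauss interpolant $I_N^{\mathrm G}[f]$. The coefficient of $T_0$ is obtained directly: applying $\mathcal M_{\mathrm C}$ and using \eqref{supimp} together with \eqref{eq:cheb-histopolant-mean} gives $c_0^{(N)}=\mathcal M_{\mathrm C}(K_N^{\mathrm C}[f])=\mathcal M_{\mathrm C}(f)$. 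This avoids computing the $T_0$ coefficient of $I_N^{\mathrm G}[f]$, which we know equals $Q_N^{\mathrm C}(f)$ by \eqref{eq:cheb-mean-exactness}.

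For $k\geq1$, write $I_N^{\mathrm G}[f]=\sum_{m=0}^N a_mT_m$. The key tool is the discrete orthogonality of $T_0,\ldots,T_N$ at the Gauss nodes:
\[
\sum_{j=0}^{N}T_k(x_j)T_m(x_j)=
\begin{cases}
N+1, & k=m=0,\\
\frac{N+1}{2}, & k=m\geq1,\\
0, & k\neq m,
\end{cases}
\qquad 0\leq k,m\leq N.
\]
To prove it, use $T_k(x_j)T_m(x_j)=\tfrac12\left[\cos((k+m)\theta_j)+\cos((k-m)\theta_j)\right]$ and reduce to sums $\sum_{j=0}^N\cos(\ell\theta_j)$ with $|\ell|\leq 2N$. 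The telescoping argument in the proof of Proposition~\ref{prop:cheb-gauss-interpolation-formula} gives
\[
2\sin\left(\tfrac{\ell h}{2}\right)\sum_{j}\cos(\ell\theta_j)=\sin(\ell\pi)=0,
\]
and $\sin(\ell h/2)\neq0$ whenever $1\leq|\ell|\leq 2N<2(N+1)$. So these sums vanish for $\ell\neq0$ and equal $N+1$ for $\ell=0$. The case $k=m\geq1$ involves $\ell=k+m=2k\leq 2N$, which stays in the admissible range and yields $(N+1)/2$.

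With this relation, multiply the interpolation identity $f(x_j)=\sum_m a_mT_m(x_j)$ by $T_k(x_j)$ and sum over $j$. For $k\geq1$ this gives $\sum_j f(x_j)T_k(x_j)=\frac{N+1}{2}a_k$, that is, $a_k=\frac{2}{N+1}\sum_j f(x_j)T_k(x_j)$, which is \eqref{eq:cheb-histopolant-coefficients}. The only delicate point is checking that the index range $|\ell|\leq 2N$ never hits a multiple of $2(N+1)$, so the sine factor is nonzero. Everything else is bookkeeping.
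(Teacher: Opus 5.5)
Your proposal is correct and follows the paper's proof. You obtain $c_0^{(N)}$ from \eqref{supimp} and the normalization \eqref{eq:cheb-histopolant-mean}, identify $c_k^{(N)}$ for $k\geq1$ with the coefficients $a_k^{(N)}$ of $I_N^{\mathrm G}[f]$ via Proposition~\ref{prop:cheb-gauss-interpolation-formula}, and extract $a_k^{(N)}$ by discrete orthogonality at the Gauss nodes. The paper only asserts the product orthogonality relation, whereas you verify it via product-to-sum and the telescoping identity; your check that $1\leq|\ell|\leq 2N$ keeps $\sin(\ell h/2)\neq0$ is correct.
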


\begin{proof}
Write
\[
K_N^{\mathrm C}[f](x)
=
c_0^{(N)}
+
\sum_{k=1}^{N}c_k^{(N)}T_k(x).
\]
By~\eqref{supimp} and by the normalization condition
\eqref{eq:cheb-histopolant-mean}, we obtain
\[
c_0^{(N)}
=
\mathcal M_{\mathrm C}\left(K_N^{\mathrm C}[f]\right)
=
\mathcal M_{\mathrm C}(f).
\]
It remains to identify the nonconstant coefficients. Let
\[
I_N^{\mathrm G}[f](x)
=
a_0^{(N)}
+
\sum_{\ell=1}^{N}a_\ell^{(N)}T_\ell(x).
\]
Evaluating this identity at the Chebyshev--Gauss nodes and multiplying by
$T_k\left(x_j\right)$, we get
\[
I_N^{\mathrm G}[f]\left(x_j\right)T_k\left(x_j\right)
=
a_0^{(N)}T_k\left(x_j\right)
+
\sum_{\ell=1}^{N}
a_\ell^{(N)}T_\ell\left(x_j\right)T_k\left(x_j\right).
\]
Summing over $j=0,\ldots,N$ and using the discrete orthogonality relations
\[
\sum_{j=0}^{N}T_k\left(x_j\right)=0,
\quad k=1,\ldots,N,
\]
and
\[
\frac{2}{N+1}
\sum_{j=0}^{N}T_\ell\left(x_j\right)T_k\left(x_j\right)
=
\begin{cases}
1, & \ell=k,\\
0, & \ell\neq k,
\end{cases}
\quad k,\ell=1,\ldots,N.
\]
we obtain
\[
a_k^{(N)}
=
\frac{2}{N+1}
\sum_{j=0}^{N}
I_N^{\mathrm G}[f]\left(x_j\right)T_k\left(x_j\right),
\quad k=1,\ldots,N.
\]
Since $I_N^{\mathrm G}[f]\left(x_j\right)=f\left(x_j\right)$ for $j=0,\ldots,N$, it follows that
\[
a_k^{(N)}
=
\frac{2}{N+1}
\sum_{j=0}^{N}
f\left(x_j\right)T_k\left(x_j\right),
\quad k=1,\ldots,N.
\]
Finally, Proposition~\ref{prop:cheb-gauss-interpolation-formula} gives
\[
K_N^{\mathrm C}[f]
=
I_N^{\mathrm G}[f]
+
\left(
\mathcal M_{\mathrm C}(f)-Q_N^{\mathrm C}(f)
\right).
\]
Thus $K_N^{\mathrm C}[f]$ and $I_N^{\mathrm G}[f]$ differ only by an additive
constant. Therefore their coefficients of $T_1,\ldots,T_N$ coincide, namely
\[
c_k^{(N)}=a_k^{(N)},
\quad k=1,\ldots,N.
\]
This proves~\eqref{eq:cheb-histopolant-coefficients}.
\end{proof}

\section{Weighted Jacobi primitives and Chebyshev grids}
\label{sec4}

In this section, we use weighted primitives to relate Jacobi derivative
histopolation to interpolation. Let $\alpha,\beta>-1$, and let
\[
1\ge x_0>x_1>\cdots>x_N\ge -1.
\]
Set
\[
s_i=\left[x_i,x_{i-1}\right],
\quad i=1,\ldots,N.
\]
Since the intervals $s_i$ are consecutive cells, Proposition~\ref{prop:elementary-intervals-admissible} implies
that the family
\[
\mathcal S=\left\{s_1,\ldots,s_N\right\}
\]
is $\omega_{\alpha,\beta}$-admissible. Moreover,
\[
\mathcal M_{\alpha,\beta}(1)
=
\int_{-1}^{1}\omega_{\alpha,\beta}(x)\dd x
>0.
\]
Therefore, by Theorem~\ref{thm:admissible-unisolvence}, the corresponding
weighted derivative histopolation problem is unisolvent on $\Pi_N$. 
For any
$u\in C^1([-1,1])$, define 
\begin{equation}\label{eq:jacobi-weighted-primitive}
W_{\alpha,\beta}[u](x)
=
\int_{x_0}^{x}u'(t)\omega_{\alpha,\beta}(t)\dd t,
\quad x\in[-1,1].
\end{equation}
Let $K_N^{(\alpha,\beta)}[f]\in\Pi_N$ be the weighted derivative
histopolant of $f\in C^1([-1,1])$, namely the unique polynomial satisfying
\begin{equation}\label{eq:jacobi-weighted-histopolant-mean}
\mathcal M_{\alpha,\beta}
\left(K_N^{(\alpha,\beta)}[f]\right)
=
\mathcal M_{\alpha,\beta}(f),
\end{equation}
where $\mathcal M_{\alpha,\beta}$ is defined in~\eqref{eq:jacobi-mean}, and
\begin{equation}\label{eq:jacobi-weighted-histopolant-moments}
\int_{s_i}
\left(K_N^{(\alpha,\beta)}[f]\right)'(x)
\omega_{\alpha,\beta}(x)\dd x
=
\int_{s_i}
f'(x)\omega_{\alpha,\beta}(x)\dd x,
\quad i=1,\ldots,N.
\end{equation}

\begin{proposition}\label{prop:weighted-primitive-interpolation}
For any $f\in C^1([-1,1])$, we have
\begin{equation}\label{eq:weighted-primitive-nodal-agreement}
W_{\alpha,\beta}
\left[
K_N^{(\alpha,\beta)}[f]
\right]\left(x_j\right)
=
W_{\alpha,\beta}[f]\left(x_j\right),
\quad j=0,\ldots,N.
\end{equation}
Hence, if $I_N$ denotes the interpolation operator at the nodes
$x_0,\ldots,x_N$, then
\begin{equation}\label{eq:weighted-primitive-interpolation}
I_N
\left[
W_{\alpha,\beta}
\left[
K_N^{(\alpha,\beta)}[f]
\right]
\right]
=
I_N
\left[
W_{\alpha,\beta}[f]
\right].
\end{equation}
\end{proposition}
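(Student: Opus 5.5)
The argument is a telescoping one, mirroring the proof of Proposition~\ref{prop:cheb-gauss-interpolation-formula} but at the level of the weighted primitive. Write $K=K_N^{(\alpha,\beta)}[f]$ for brevity. By the definition~\eqref{eq:jacobi-weighted-primitive}, both $W_{\alpha,\beta}[K]$ and $W_{\alpha,\beta}[f]$ vanish at $x_0$, since the integral is taken from $x_0$ to $x_0$. This gives~\eqref{eq:weighted-primitive-nodal-agreement} for $j=0$. The only point to note is that $W_{\alpha,\beta}[u]$ is well defined and continuous on $[-1,1]$, because $u'\omega_{\alpha,\beta}\in L^1(-1,1)$ for $u\in C^1([-1,1])$ and $\alpha,\beta>-1$.

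For $j\ge1$, we use the additivity of the integral over consecutive cells. Since $x_j<\cdots<x_0$ and $s_i=[x_i,x_{i-1}]$,
\[
W_{\alpha,\beta}[u](x_j)=\int_{x_0}^{x_j}u'(t)\omega_{\alpha,\beta}(t)\dd t
=-\sum_{i=1}^{j}\int_{s_i}u'(t)\omega_{\alpha,\beta}(t)\dd t
=-\sum_{i=1}^{j}\mathcal D_i^{\omega_{\alpha,\beta}}(u).
\]
Applying this with $u=K$ and with $u=f$, the moment conditions~\eqref{eq:jacobi-weighted-histopolant-moments} say that each summand agrees term by term. Hence the two sums coincide, which proves~\eqref{eq:weighted-primitive-nodal-agreement} for every $j=0,\ldots,N$. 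The normalization~\eqref{eq:jacobi-weighted-histopolant-mean} is not needed here: it only fixes the additive constant of $K$, which $W_{\alpha,\beta}$ annihilates.

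For~\eqref{eq:weighted-primitive-interpolation}, recall that $I_N$ maps a continuous function $g$ to the unique polynomial in $\Pi_N$ agreeing with $g$ at the $N+1$ distinct nodes, so $I_N[g]$ depends only on the values $g(x_0),\ldots,g(x_N)$. Both $W_{\alpha,\beta}[K]$ and $W_{\alpha,\beta}[f]$ are continuous on $[-1,1]$ and, by the first part, take the same values at all nodes. Therefore their interpolants coincide.

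There is no genuine obstacle. The only points needing care are the orientation and sign conventions, since the nodes are decreasing and the cells are $[x_i,x_{i-1}]$, and the integrability of the primitive at the endpoints $\pm1$ when the nodes reach them.
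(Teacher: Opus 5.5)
Your proof is correct and follows the paper's argument. Both primitives vanish at $x_0$, and the moment conditions on the consecutive cells $s_i=[x_i,x_{i-1}]$ propagate the agreement to every node; the paper does this step by step, while you use the summed identity $W_{\alpha,\beta}[u](x_j)=-\sum_{i=1}^{j}\mathcal D_i^{\omega_{\alpha,\beta}}(u)$, and the interpolation identity then follows because $I_N$ depends only on nodal values.
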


\begin{proof}
By definition~\eqref{eq:jacobi-weighted-primitive}, we have
\[
W_{\alpha,\beta}
\left[
K_N^{(\alpha,\beta)}[f]
\right]\left(x_0\right)
=
W_{\alpha,\beta}[f]\left(x_0\right)
=
0.
\]
Moreover, by~\eqref{eq:jacobi-weighted-histopolant-moments}, for
$i=1,\ldots,N$ we have
\begin{eqnarray*}
&&
W_{\alpha,\beta}
\left[
K_N^{(\alpha,\beta)}[f]
\right]\left(x_{i-1}\right)
-
W_{\alpha,\beta}
\left[
K_N^{(\alpha,\beta)}[f]
\right]\left(x_i\right)
\\
&=&
\int_{x_i}^{x_{i-1}}
\left(K_N^{(\alpha,\beta)}[f]\right)'(x)
\omega_{\alpha,\beta}(x)\dd x
\\
&=&
\int_{x_i}^{x_{i-1}}
f'(x)\omega_{\alpha,\beta}(x)\dd x
\\
&=&
W_{\alpha,\beta}[f]\left(x_{i-1}\right)
-
W_{\alpha,\beta}[f]\left(x_i\right).
\end{eqnarray*}
Since the two weighted primitives coincide at $x_0$, the previous identity gives
successively their agreement at $x_1,\ldots,x_N$. This proves
\eqref{eq:weighted-primitive-nodal-agreement}. The identity for the interpolants follows at once.
\end{proof}

 Let
\begin{equation}\label{eq:chebyshev-lobatto-angles}
\theta_j=\frac{j\pi}{N},
\quad
x_j=\cos\left(\theta_j\right),
\quad j=0,\ldots,N.
\end{equation}
Then
\[
1=x_0>x_1>\cdots>x_N=-1.
\]
Set
\[
s_i=\left[x_i,x_{i-1}\right],
\quad i=1,\ldots,N,
\]
and define the reduced Chebyshev derivative matrix by
\begin{equation}\label{eq:chebyshev-lobatto-derivative-matrix}
\left[\widetilde D_N\right]_{ik}
=
\int_{s_i}T_k'(x)\dd x,
\quad i,k=1,\ldots,N.
\end{equation}

\begin{proposition}\label{prop:chebyshev-lobatto-factorization}
Let
\begin{equation}\label{eq:chebyshev-lobatto-sine-matrix}
\left[\widetilde S_N\right]_{ik}
=
\sin\left(\frac{\left(2i-1\right)k\pi}{2N}\right),
\quad i,k=1,\ldots,N,
\end{equation}
and
\begin{equation}\label{eq:chebyshev-lobatto-diagonal-factor}
\widetilde\Sigma_N
=
\operatorname{diag}
\left(
2\sin\left(\frac{k\pi}{2N}\right)
\right)_{k=1}^{N}.
\end{equation}
Then
\begin{equation}\label{eq:chebyshev-lobatto-factorization}
\widetilde D_N
=
\widetilde S_N\widetilde\Sigma_N.
\end{equation}
Moreover,
\begin{equation}\label{eq:chebyshev-lobatto-sine-orthogonality}
\widetilde S_N^\top\widetilde S_N
=
\operatorname{diag}
\left(
\frac N2,\ldots,\frac N2,N
\right).
\end{equation}
\end{proposition}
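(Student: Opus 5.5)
Both claims come down to elementary trigonometric identities, in the same way as Theorem~\ref{thm:chebyshev-factorization}. For the factorization, I start from the fundamental theorem of calculus:
\[
[\widetilde D_N]_{ik}=T_k(x_{i-1})-T_k(x_i)=\cos(k\theta_{i-1})-\cos(k\theta_i).
\]
The sum-to-product formula $\cos A-\cos B=2\sin\frac{A+B}{2}\sin\frac{B-A}{2}$ is then applied with $A=k(i-1)\pi/N$ and $B=ki\pi/N$. This gives $\frac{A+B}{2}=\frac{(2i-1)k\pi}{2N}$ and $\frac{B-A}{2}=\frac{k\pi}{2N}$. Hence $[\widetilde D_N]_{ik}=[\widetilde S_N]_{ik}[\widetilde\Sigma_N]_{kk}$, which is \eqref{eq:chebyshev-lobatto-factorization}.

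For the Gram matrix I compute
\[
[\widetilde S_N^\top\widetilde S_N]_{kl}=\sum_{i=1}^N\sin(k\phi_i)\sin(l\phi_i),\qquad \phi_i=\frac{(2i-1)\pi}{2N},
\]
and rewrite each product as $\tfrac12[\cos((k-l)\phi_i)-\cos((k+l)\phi_i)]$. Everything then reduces to the midpoint cosine sums $C(m)=\sum_{i=1}^N\cos(m\phi_i)$ for integers $0\le m\le 2N$. These are evaluated by the telescoping trick from the proof of Proposition~\ref{prop:cheb-gauss-interpolation-formula}:
\[
2\sin\left(\frac{m\pi}{2N}\right)\cos(m\phi_i)=\sin\left(\frac{mi\pi}{N}\right)-\sin\left(\frac{m(i-1)\pi}{N}\right).
\]
Summing over $i$ gives $2\sin(m\pi/(2N))\,C(m)=\sin(m\pi)=0$. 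Therefore $C(m)=0$ whenever $\sin(m\pi/(2N))\ne0$, which holds for $1\le m\le 2N-1$. In the remaining cases $C(0)=N$ and $C(2N)=\sum_i\cos((2i-1)\pi)=-N$.

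The bookkeeping of cases is the only real obstacle, and it comes from the endpoint $k=l=N$.
\begin{itemize}
\item For $k\ne l$, both $|k-l|$ and $k+l$ lie in $[1,2N-1]$, so the entry vanishes.
\item For $k=l<N$, the entry is $\tfrac12(C(0)-C(2k))=\tfrac12(N-0)=N/2$.
\item For $k=l=N$, the entry is $\tfrac12(C(0)-C(2N))=\tfrac12(N+N)=N$. Alternatively, $\sin(\tfrac{(2i-1)\pi}{2})=\pm1$ directly, so the diagonal entry is $N$.
\end{itemize}
This gives \eqref{eq:chebyshev-lobatto-sine-orthogonality}.
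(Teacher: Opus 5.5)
Your proof is correct and follows the paper's route: the fundamental theorem of calculus and the sum-to-product identity give the factorization entrywise, and the Gram identity is the discrete sine orthogonality on the midpoint grid $\phi_i=(2i-1)\pi/(2N)$. The only difference is that the paper cites this orthogonality relation as a standard fact, whereas you derive it from the telescoping midpoint cosine sums, correctly isolating the exceptional entry $k=l=N$ (where $C(2N)=-N$), so your argument is self-contained.
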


\begin{proof}
By the fundamental theorem of calculus,
\[
\left[\widetilde D_N\right]_{ik}
=
T_k\left(x_{i-1}\right)-T_k\left(x_i\right).
\]
Using~\eqref{eq:chebyshev-lobatto-angles}, we obtain
\begin{eqnarray*}
\left[\widetilde D_N\right]_{ik}
&=&
\cos\left(\frac{\left(i-1\right)k\pi}{N}\right)
-
\cos\left(\frac{ik\pi}{N}\right)
\\
&=&
2
\sin\left(\frac{\left(2i-1\right)k\pi}{2N}\right)
\sin\left(\frac{k\pi}{2N}\right)\\
&=&\left[\widetilde S_N\right]_{ik} \left[\widetilde\Sigma_N \right]_{kk}.
\end{eqnarray*}
This gives~\eqref{eq:chebyshev-lobatto-factorization}. Moreover, the standard
discrete sine orthogonality relation~\cite[Chapter~4]{Mason:2002:CP} gives
\begin{eqnarray*}
\left[\widetilde S_N^\top\widetilde S_N\right]_{km}
&=&
\sum_{i=1}^{N}
\sin\left(\frac{\left(2i-1\right)k\pi}{2N}\right)
\sin\left(\frac{\left(2i-1\right)m\pi}{2N}\right)
\\
&=&
\begin{cases}
0, & k\neq m,\\[0.4ex]
\dfrac N2, & k=m,\quad k=1,\ldots,N-1,\\[1ex]
N, & k=m=N.
\end{cases}
\end{eqnarray*}
Hence~\eqref{eq:chebyshev-lobatto-sine-orthogonality} follows.
\end{proof}

\begin{remark}
The two Chebyshev grids considered above correspond to the two standard sine
structures associated with the interior and half-shifted angular meshes. Such
sine and cosine based matrix spaces, and their role in optimal matrix algebra
approximations, are discussed in~\cite{Benedetto:2000:OMM}.
\end{remark}

\begin{corollary}\label{cor:chebyshev-lobatto-spectrum}
The matrix $\widetilde D_N^\top\widetilde D_N$ is diagonal. Its diagonal
entries are
\begin{equation}\label{eq:chebyshev-lobatto-gram-entries}
\left[\widetilde D_N^\top\widetilde D_N\right]_{kk}
=
2N\sin^2\left(\frac{k\pi}{2N}\right),
\quad k=1,\ldots,N-1,
\end{equation}
and
\begin{equation}\label{eq:chebyshev-lobatto-last-entry}
\left[\widetilde D_N^\top\widetilde D_N\right]_{NN}
=
4N.
\end{equation}
Consequently,
\[
\sigma_k\left(\widetilde D_N\right)
=
\sqrt{2N}
\sin\left(\frac{k\pi}{2N}\right),
\quad k=1,\ldots,N-1, \quad \sigma_N\left(\widetilde D_N\right)
=
2\sqrt N.
\]
\end{corollary}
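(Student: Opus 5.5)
The plan is to combine the two identities of Proposition~\ref{prop:chebyshev-lobatto-factorization}. From $\widetilde D_N=\widetilde S_N\widetilde\Sigma_N$ and the symmetry of the diagonal factor $\widetilde\Sigma_N$, we have
\[
\widetilde D_N^\top\widetilde D_N=\widetilde\Sigma_N\,\widetilde S_N^\top\widetilde S_N\,\widetilde\Sigma_N .
\]
By~\eqref{eq:chebyshev-lobatto-sine-orthogonality}, the middle factor is diagonal, so the whole product is a product of three diagonal matrices and is therefore diagonal. Its $k$-th diagonal entry is $[\widetilde S_N^\top\widetilde S_N]_{kk}\,[\widetilde\Sigma_N]_{kk}^2$.

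For $k=1,\ldots,N-1$ this entry is
\[
\frac N2\cdot 4\sin^2\left(\frac{k\pi}{2N}\right)=2N\sin^2\left(\frac{k\pi}{2N}\right),
\]
which is~\eqref{eq:chebyshev-lobatto-gram-entries}. For $k=N$ we have $\sin(N\pi/(2N))=\sin(\pi/2)=1$, so $[\widetilde\Sigma_N]_{NN}=2$ and the entry equals $N\cdot4=4N$, which is~\eqref{eq:chebyshev-lobatto-last-entry}.

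The singular values of $\widetilde D_N$ are the square roots of the eigenvalues of $\widetilde D_N^\top\widetilde D_N$. Since this Gram matrix is diagonal, its eigenvalues are exactly the diagonal entries computed above. Taking square roots gives $\sqrt{2N}\sin(k\pi/(2N))$ for $k=1,\ldots,N-1$ and $2\sqrt N$ for $k=N$. No sign issue arises, because $\sin(k\pi/(2N))>0$ for $1\le k\le N$.

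The only delicate point is a labeling convention rather than a computation. Even after the normalization $\sigma_N=2\sqrt N$ is fixed, the values are listed by column index $k$, not in decreasing order as singular values usually are. The values $\sqrt{2N}\sin(k\pi/(2N))$ increase with $k$ and stay below $\sqrt{2N}<2\sqrt N$, so $\sigma_N=2\sqrt N$ is the largest. I would therefore add a sentence stating that $\sigma_k$ denotes the singular value associated with the $k$-th right singular vector, namely the $k$-th canonical vector, since the Gram matrix is diagonal.
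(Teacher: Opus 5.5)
Your proof is correct and follows the paper's argument exactly: you write $\widetilde D_N^\top\widetilde D_N=\widetilde\Sigma_N\widetilde S_N^\top\widetilde S_N\widetilde\Sigma_N$, use the diagonal form of $\widetilde S_N^\top\widetilde S_N$ from Proposition~\ref{prop:chebyshev-lobatto-factorization} to read off the diagonal entries, and then take square roots. Your remark that the singular values are indexed by column rather than listed in decreasing order is a sensible clarification that the paper leaves implicit.
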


\begin{proof}
By Proposition~\ref{prop:chebyshev-lobatto-factorization}, we have
\[
\widetilde D_N^\top\widetilde D_N
=\left(\widetilde S_N
\widetilde\Sigma_N \right)^{\top} \left(\widetilde S_N
\widetilde\Sigma_N\right)=
\widetilde\Sigma_N
\widetilde S_N^\top
\widetilde S_N
\widetilde\Sigma_N.
\]
Together with~\eqref{eq:chebyshev-lobatto-sine-orthogonality}, this proves
that $\widetilde D_N^\top\widetilde D_N$ is diagonal. For
$k=1,\ldots,N-1$, the $k$-th diagonal entry is
\[
\frac N2
\left[
2\sin\left(\frac{k\pi}{2N}\right)
\right]^2
=
2N\sin^2\left(\frac{k\pi}{2N}\right).
\]
For $k=N$, it is
\[
N
\left[
2\sin\left(\frac{\pi}{2}\right)
\right]^2
=
4N.
\]
Taking square roots gives the stated singular values.
\end{proof}

\begin{remark}\label{rem:chebyshev-lobatto-condition-number} 
For $N\ge 2$, the singular values in Corollary~\ref{cor:chebyshev-lobatto-spectrum} give
directly the spectral condition number of $\widetilde D_N$. Since
\[
\sin\left(\frac{k\pi}{2N}\right)
\]
is increasing for $k=1,\ldots,N-1$, the smallest singular value is
\[
\sigma_{\min}\left(\widetilde D_N\right)
=
\sqrt{2N}
\sin\left(\frac{\pi}{2N}\right),
\]
whereas the greatest one is
\[
\sigma_{\max}\left(\widetilde D_N\right)
=
2\sqrt N.
\]
Therefore,
\begin{equation*}
\kappa_2\left(\widetilde D_N\right)
=
\frac{\sigma_{\max}\left(\widetilde D_N\right)}
{\sigma_{\min}\left(\widetilde D_N\right)}
=
\frac{\sqrt{2}}
{\sin\left(\frac{\pi}{2N}\right)}.
\end{equation*}
In particular,
\[
\kappa_2\left(\widetilde D_N\right)
\sim
\frac{2\sqrt{2}}{\pi}N,
\quad N\to\infty.
\]
Thus the Chebyshev--Lobatto reduced derivative matrix has a condition number
which grows linearly with the polynomial degree.
\end{remark}

The behaviour of the two closed form expressions for
$\kappa_2\left(D_N\right)$ and
$\kappa_2\left(\widetilde D_N\right)$ is shown in
Figure~\ref{fig:cheb-conditioning}.
\begin{figure}[h]
\centering
\includegraphics[width=0.49\textwidth]{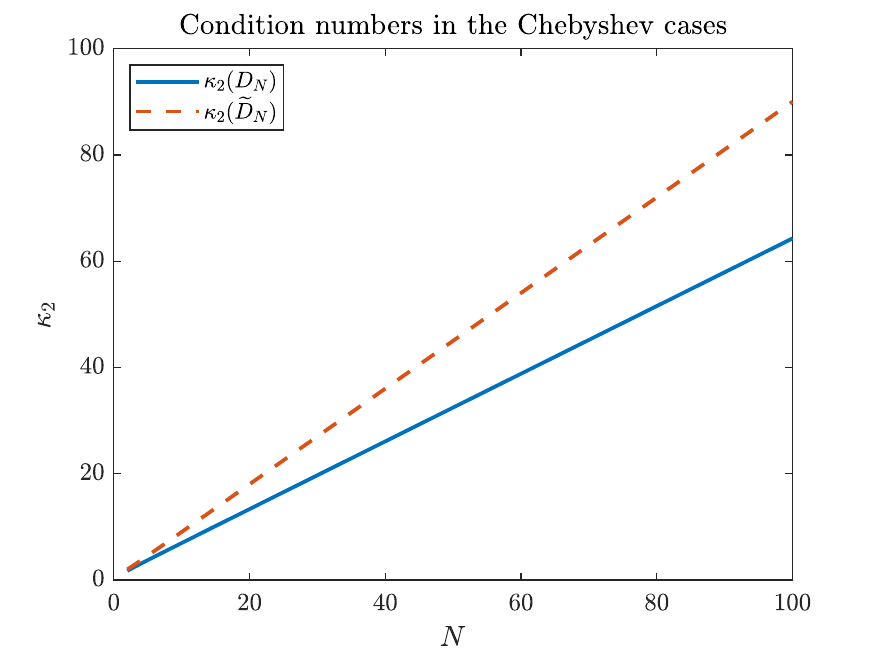}
\caption{Spectral condition numbers of the reduced derivative
matrices in the Chebyshev cases.}
\label{fig:cheb-conditioning}
\end{figure}

\begin{remark}
The grid introduced in~\eqref{eq:chebyshev-lobatto-angles} is the
Chebyshev--Gauss--Lobatto grid. This grid is standard in Chebyshev spectral
collocation methods~\cite{Serra:2000:CTF,Capizzano:2003:AOP}. In that
setting, the unknown function is usually represented by its values at the
nodes, and derivatives are approximated by differentiation matrices. In the
present construction the same nodes are used in a different way. They define
the cells on which the derivative moments are imposed. For this particular
mesh, the resulting derivative moment matrix has an exact discrete sine
transform structure, as proved in
Proposition~\ref{prop:chebyshev-lobatto-factorization}. This shows that the
Chebyshev--Gauss--Lobatto grid also has a natural role in derivative
histopolation, where it leads to the same type of trigonometric structure
that appears in classical Chebyshev spectral discretizations.
\end{remark}

We next show that the diagonal structure of the reduced Gram matrix is not
determined only by the weight and by the interval family. Indeed, if the
polynomial basis is not prescribed, a diagonal form can always be obtained by
a suitable choice of basis.

\begin{theorem}\label{thm:decomp}
Let $\omega\in L^1(-1,1)$ satisfy~\eqref{eq:positive-weight}, and let
\[
\mathcal S=\left\{s_1,\ldots,s_N\right\}
\]
be an $\omega$-admissible family of nondegenerate intervals. Let $V$ be an
algebraic complement of $\Pi_0$ in $\Pi_N$, namely
\[
\Pi_N=\Pi_0\oplus V.
\]
Then there exists a basis $\left\{\phi_1,\ldots,\phi_N\right\}$ of $V$ such that the reduced derivative matrix
\[
D_N=\left[
\int_{s_i}\phi_k'(x)\omega(x)\dd x
\right]_{i,k=1}^{N}
\]
satisfies
\[
D_N^\top D_N=I_N.
\]
\end{theorem}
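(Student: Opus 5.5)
The plan is to show that the restriction of $\mathcal D_{\mathcal S}^{\omega}$ to $V$ is a linear isomorphism $V\to\RR^N$. Once this is known, we simply pull back the canonical basis of $\RR^N$ (or any orthonormal basis). With $\phi_k$ chosen as the preimage of the $k$-th canonical vector, the reduced derivative matrix is the identity, and hence $D_N^\top D_N=I_N$.

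\textbf{Injectivity on $V$.} By Proposition~\ref{prop:admissible-derivative-kernel}, since $\mathcal S$ is $\omega$-admissible, the restriction $\mathcal D^{\omega}_{{\mathcal S}_{|{\Pi_N}}}$ is surjective onto $\RR^N$ and its kernel is $\Pi_0$. Let $v\in V$ with $\mathcal D_{\mathcal S}^{\omega}(v)=\boldsymbol 0$. Then $v\in\Pi_0\cap V=\{0\}$, because the sum $\Pi_0\oplus V$ is direct. So the restriction to $V$ is injective.

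\textbf{Bijectivity and construction of the basis.} Since $\dim V=\dim\Pi_N-\dim\Pi_0=N$, an injective linear map $V\to\RR^N$ is bijective. (Alternatively, surjectivity can be checked directly: for $\boldsymbol v\in\RR^N$, take $p\in\Pi_N$ with $\mathcal D_{\mathcal S}^{\omega}(p)=\boldsymbol v$ and write $p=c+v$ with $c\in\Pi_0$, $v\in V$; then $\mathcal D_{\mathcal S}^{\omega}(v)=\boldsymbol v$.) Now define $\phi_k=\bigl(\mathcal D^{\omega}_{{\mathcal S}_{|V}}\bigr)^{-1}(\boldsymbol e_k)$ for $k=1,\ldots,N$. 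These form a basis of $V$, being images of a basis under an isomorphism. By construction,
\[
\int_{s_i}\phi_k'(x)\omega(x)\dd x=\mathcal D_i^\omega(\phi_k)=\delta_{ik},
\]
so $D_N=I_N$ and in particular $D_N^\top D_N=I_N$.

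\textbf{Main obstacle.} There is no real obstacle. The only point needing care is that the kernel on $\Pi_N$ is exactly $\Pi_0$, which is supplied by Proposition~\ref{prop:admissible-derivative-kernel}, and that it meets $V$ trivially. As a more general remark, any basis $\{\chi_k\}$ of $V$ with matrix $D$ can be corrected by $\phi=\chi D^{-1}Q$ for an arbitrary orthogonal $Q$, which gives all bases with $D_N^\top D_N=I_N$.
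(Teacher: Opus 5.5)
Your proof is correct. It rests on the same key fact as the paper's proof: $\mathcal D_{\mathcal S}^{\omega}$ restricted to $V$ has trivial kernel, because its kernel on $\Pi_N$ is $\Pi_0$ (Proposition~\ref{prop:admissible-derivative-kernel}) and $V\cap\Pi_0=\{0\}$. The difference is in how you finish.

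The paper uses this fact to show that $\langle p,q\rangle_{\mathcal S,\omega}=\sum_{i}\mathcal D_i^\omega(p)\,\mathcal D_i^\omega(q)$ is an inner product on $V$. It then takes an orthonormal basis for that inner product. You instead invert the isomorphism $V\to\RR^N$ and pull back the canonical basis.

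These are two views of the same construction. The paper's inner product is the pullback of the Euclidean one, and its orthonormal bases are exactly your $\chi D^{-1}Q$ with $Q$ orthogonal. Your version buys the sharper conclusion $D_N=I_N$ and an explicit parametrization of all bases with $D_N^\top D_N=I_N$. The paper's inner-product framing fits more naturally with its later renormalizations. These are the Cholesky change of basis $D_N=\widetilde D_N R^{-1}$ and the Chebyshev examples, where one only rescales columns that are already orthogonal, so that $D_N^\top D_N=I_N$ holds but $D_N\neq I_N$.
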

\begin{proof}
For $p,q\in V$, set
\[
\langle p,q\rangle_{\mathcal S,\omega}
=
\sum_{i=1}^{N}
\left(
\int_{s_i}p'(x)\omega(x)\dd x
\right)
\left(
\int_{s_i}q'(x)\omega(x)\dd x
\right).
\]
This is a symmetric bilinear form on $V$. We show that it is positive
definite. Let $p\in V$ and assume that
\[
\langle p,p\rangle_{\mathcal S,\omega}=\sum_{i=1}^{N}
\left(
\int_{s_i}p'(x)\omega(x)\dd x
\right)^2=0.
\]
Then each term in the sum is nonnegative, and therefore
\[
\mathcal D_i^\omega(p)=\int_{s_i}p'(x)\omega(x)\dd x=0,
\quad i=1,\ldots,N.
\]
Since $\mathcal S$ is $\omega$-admissible, Proposition~\ref{prop:admissible-derivative-kernel}
implies that $p$ is constant. 
Hence $p\in V\cap\Pi_0$. Since
\[
\Pi_N=\Pi_0\oplus V,
\]
we have $V\cap\Pi_0=\{0\}$. Thus $p=0$. Consequently,
$\langle\cdot,\cdot\rangle_{\mathcal S,\omega}$ is an inner product on $V$.

We may therefore choose an orthonormal basis
\[
\left\{\phi_1,\ldots,\phi_N\right\}
\]
of $V$ with respect to this inner product. For this basis, and for any $j,k=1,\ldots,N$, we have
\begin{eqnarray*}
\left[D_N^\top D_N\right]_{jk}
&=&
\sum_{i=1}^{N}
\left(
\int_{s_i}\phi_j'(x)\omega(x)\dd x
\right)
\left(
\int_{s_i}\phi_k'(x)\omega(x)\dd x
\right)\\
&=&
\left\langle\phi_j,\phi_k\right\rangle_{\mathcal S,\omega}=
\begin{cases}
1, & j=k,\\
0, & j\neq k.
\end{cases}
\end{eqnarray*}
Therefore
\[
D_N^\top D_N=I_N.
\]
\end{proof}

Let $\alpha,\beta,\gamma,\delta>-1$, and let
\[
\mathcal S=\left\{s_1,\ldots,s_N\right\}
\]
be an $\omega_{\gamma,\delta}$-admissible family of intervals. We denote by
$\mathcal M_{\alpha,\beta}$ the linear functional defined in
\eqref{eq:jacobi-mean}, namely
\[
\mathcal M_{\alpha,\beta}(f)
=
\int_{-1}^{1}f(x)\omega_{\alpha,\beta}(x)\dd x.
\]
We define
\[
V_{\alpha,\beta}
=
\operatorname{span}
\left\{
P_1^{(\alpha,\beta)},\ldots,P_N^{(\alpha,\beta)}
\right\}.
\]
By the orthogonality of the Jacobi polynomials with respect to
$\omega_{\alpha,\beta}$, we have
\[
\mathcal M_{\alpha,\beta}
\left(P_k^{(\alpha,\beta)}\right)
=
\int_{-1}^{1}
P_k^{(\alpha,\beta)}(x)\omega_{\alpha,\beta}(x)\dd x
=
0,
\quad k=1,\ldots,N.
\]
Hence, by linearity,
\[
\mathcal M_{\alpha,\beta}(p)=0,
\quad p\in V_{\alpha,\beta}.
\]
Since
\[
\mathcal M_{\alpha,\beta}
\left(P_0^{(\alpha,\beta)}\right)
=
\mathcal M_{\alpha,\beta}(1)
>0,
\]
the restriction of $\mathcal M_{\alpha,\beta}$ to $\Pi_N$ is a nonzero
linear functional. Therefore its kernel in $\Pi_N$ has dimension $N$. Since
$V_{\alpha,\beta}$ has also dimension $N$, we obtain
\begin{equation}\label{Valphabeta}
V_{\alpha,\beta}
=
\left\{
p\in\Pi_N:
\mathcal M_{\alpha,\beta}(p)=0
\right\}.
\end{equation}
In particular,
\[
V_{\alpha,\beta}\cap\Pi_0=\{0\},
\]
and consequently
\[
\Pi_N=\Pi_0\oplus V_{\alpha,\beta}.
\]

\begin{corollary}\label{cor:mixed-jacobi-diagonal-gram}
There exists a basis
\[
\Phi=\left\{\phi_1,\ldots,\phi_N\right\}
\]
of $V_{\alpha,\beta}$ such that the matrix representation of the map
$\mathcal J_{\mathcal S}^{(\alpha,\beta;\gamma,\delta)}$, defined
in~\eqref{Jacobi_Map}, with respect to the basis
\[
\left\{1,\phi_1,\ldots,\phi_N\right\}
\]
has the block form
\begin{equation}\label{blockform}
\begin{bmatrix}
h_0^{(\alpha,\beta)} & \boldsymbol 0^\top\\[2mm]
\boldsymbol 0 & D_{\Phi,N}^{(\alpha,\beta;\gamma,\delta)}
\end{bmatrix},
\end{equation}
where
\[
\left(
D_{\Phi,N}^{(\alpha,\beta;\gamma,\delta)}
\right)^\top
D_{\Phi,N}^{(\alpha,\beta;\gamma,\delta)}
=
I_N.
\]
\end{corollary}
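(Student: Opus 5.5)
The plan is to combine Theorem~\ref{thm:decomp} with the decomposition $\Pi_N=\Pi_0\oplus V_{\alpha,\beta}$ established just before the statement, and then to recompute the block structure of Remark~\ref{rem-mixed-jacobi-block-form} for the new basis. Since $\mathcal S$ is $\omega_{\gamma,\delta}$-admissible, I will apply Theorem~\ref{thm:decomp} with $\omega=\omega_{\gamma,\delta}$, which satisfies~\eqref{eq:positive-weight}, and with the complement $V=V_{\alpha,\beta}$. This gives a basis $\Phi=\{\phi_1,\ldots,\phi_N\}$ of $V_{\alpha,\beta}$ whose reduced derivative matrix
\[
D_{\Phi,N}^{(\alpha,\beta;\gamma,\delta)}=\left[\int_{s_i}\phi_k'(x)\omega_{\gamma,\delta}(x)\dd x\right]_{i,k=1}^N
\]
satisfies $\left(D_{\Phi,N}^{(\alpha,\beta;\gamma,\delta)}\right)^\top D_{\Phi,N}^{(\alpha,\beta;\gamma,\delta)}=I_N$.

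Next I will check that $\{1,\phi_1,\ldots,\phi_N\}$ is a basis of $\Pi_N$. This holds because $\Pi_N=\Pi_0\oplus V_{\alpha,\beta}$ and $\Phi$ is a basis of $V_{\alpha,\beta}$.

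I will then compute the columns of the matrix of $\mathcal J_{\mathcal S}^{(\alpha,\beta;\gamma,\delta)}$ in this basis.
\begin{itemize}
\item The first column is the image of $1$. Its first entry is $\mathcal M_{\alpha,\beta}(1)=h_0^{(\alpha,\beta)}$, and its remaining entries are $\mathcal D_i^{\omega_{\gamma,\delta}}(1)=0$, since the derivative of a constant vanishes.
\item For $k\ge1$, the first entry is $\mathcal M_{\alpha,\beta}(\phi_k)=0$ by the characterization~\eqref{Valphabeta}, since $\phi_k\in V_{\alpha,\beta}$. The remaining entries are exactly the $k$-th column of $D_{\Phi,N}^{(\alpha,\beta;\gamma,\delta)}$.
\end{itemize}
Together these give the block form~\eqref{blockform}.

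No step is genuinely hard here. The only point needing care is that the zero first row relies on~\eqref{Valphabeta}, that is, on $V_{\alpha,\beta}$ being exactly the kernel of $\mathcal M_{\alpha,\beta}$ in $\Pi_N$, and not merely on $V_{\alpha,\beta}$ being some complement of $\Pi_0$. An orthonormalized basis of an arbitrary complement need not preserve the zero first row, whereas here any basis of $V_{\alpha,\beta}$ does.
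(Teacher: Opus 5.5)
Your proposal is correct and follows essentially the same route as the paper: apply Theorem~\ref{thm:decomp} with $\omega=\omega_{\gamma,\delta}$ and complement $V_{\alpha,\beta}$, then read off the block form column by column using $\mathcal D_i^{\omega_{\gamma,\delta}}(1)=0$ and the kernel characterization~\eqref{Valphabeta}. The only difference is that the paper re-proves $\Pi_N=\Pi_0\oplus V_{\alpha,\beta}$ inside the proof, whereas you cite it from the preceding discussion, which is equally valid.
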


\begin{proof}
We first observe that $V_{\alpha,\beta}$ is an algebraic complement of
$\Pi_0$ in $\Pi_N$. Indeed,
\[
\mathcal M_{\alpha,\beta}(1)
=
\int_{-1}^{1}\omega_{\alpha,\beta}(x)\dd x
=
h_0^{(\alpha,\beta)}
>0.
\]
Hence, for any $p\in\Pi_N$, we can write
\[
p=
c+
\left(p-c\right),
\quad
c=
\frac{\mathcal M_{\alpha,\beta}(p)}
     {\mathcal M_{\alpha,\beta}(1)}.
\]
Then
\[
\mathcal M_{\alpha,\beta}(p-c)
=
\mathcal M_{\alpha,\beta}(p)
-
c\mathcal M_{\alpha,\beta}(1)
=
0,
\]
and therefore
\[
p-c\in V_{\alpha,\beta}.
\]
It remains to show that
\[
\Pi_0\cap V_{\alpha,\beta}=\{0\}.
\]
Let
\[
p\in \Pi_0\cap V_{\alpha,\beta}.
\]
Then $p=c$ for some $c\in\RR$. Since $p\in V_{\alpha,\beta}$, by~\eqref{Valphabeta}, we have
\[
0=
\mathcal M_{\alpha,\beta}(p)
=
c\mathcal M_{\alpha,\beta}(1).
\]
Since $\mathcal M_{\alpha,\beta}(1)>0$, it follows that $c=0$. Consequently,
\[
\Pi_N=\Pi_0\oplus V_{\alpha,\beta}.
\]
Then, by Theorem~\ref{thm:decomp} there exists a basis
\[
\Phi=\left\{\phi_1,\ldots,\phi_N\right\}
\]
of $V_{\alpha,\beta}$ such that the corresponding reduced derivative matrix
satisfies
\[
\left(
D_{\Phi,N}^{(\alpha,\beta;\gamma,\delta)}
\right)^\top
D_{\Phi,N}^{(\alpha,\beta;\gamma,\delta)}
=
I_N,
\]
where
\[
D_{\Phi,N}^{(\alpha,\beta;\gamma,\delta)}
=
\left[
\int_{s_i}\phi_k'(x)\omega_{\gamma,\delta}(x)\dd x
\right]_{i,k=1}^{N}.
\]

It remains to identify the matrix representation of
$\mathcal J_{\mathcal S}^{(\alpha,\beta;\gamma,\delta)}$ in the basis
\[
\left\{1,\phi_1,\ldots,\phi_N\right\}.
\]
For the first basis function, namely the constant polynomial $1$, we have
\[
\mathcal M_{\alpha,\beta}(1)=h_0^{(\alpha,\beta)}
\]
and
\[
\mathcal D_i^{\omega_{\gamma,\delta}}(1)=0,
\quad i=1,\ldots,N.
\]
Hence the first column of the matrix is
\[
\left[
h_0^{(\alpha,\beta)},0,\ldots,0
\right]^\top .
\]
For the remaining basis functions, since
\[
\phi_k\in V_{\alpha,\beta},
\quad k=1,\ldots,N,
\]
by~\eqref{Valphabeta}, we have
\[
\mathcal M_{\alpha,\beta}\left(\phi_k\right)=0,
\quad k=1,\ldots,N.
\]
Hence the first row vanishes outside the first entry. Therefore the matrix has
the block form
\[
\begin{bmatrix}
h_0^{(\alpha,\beta)} & \boldsymbol 0^\top\\[2mm]
\boldsymbol 0 & D_{\Phi,N}^{(\alpha,\beta;\gamma,\delta)}
\end{bmatrix}.
\]
This proves~\eqref{blockform} and completes the proof.
\end{proof}

\begin{example}\label{ex:chebyshev-weight-diagonal-gram}
We present an explicit Chebyshev case in which a basis of
$V_{-\frac{1}{2},-\frac{1}{2}}$ with diagonal reduced Gram matrix can be written in
closed form. Let
\[
\alpha=\beta=\gamma=\delta=-\frac{1}{2}.
\]
Then
\[
\omega_{\gamma,\delta}(x)
=
\omega_{-\frac{1}{2},-\frac{1}{2}}(x)
=
\frac{1}{\sqrt{1-x^2}},
\quad x\in(-1,1).
\]
Let
\[
x_i=\cos\left(\frac{i\pi}{N}\right),
\quad i=0,\ldots,N,
\]
and consider the intervals
\[
s_i=\left[x_i,x_{i-1}\right],
\quad i=1,\ldots,N.
\]
By
Proposition~\ref{prop:elementary-intervals-admissible}, the set
\[
\mathcal S=\left\{s_1,\ldots,s_N\right\}
\]
is $\omega_{-\frac{1}{2},-\frac{1}{2}}$-admissible. For
$k=1,\ldots,N$, let $\phi_k\in\Pi_N$ be the unique polynomial satisfying
\begin{equation}\label{eq:cheb-weight-basis}
\phi_k'(x)=T_{k-1}(x),
\quad
\mathcal M_{-\frac{1}{2},-\frac{1}{2}}\left(\phi_k\right)=0.
\end{equation}
The second condition fixes the additive constant. In particular, by the
characterization of $V_{\alpha,\beta}$ in~\eqref{Valphabeta}, we have
\[
\phi_k\in V_{-\frac{1}{2},-\frac{1}{2}},
\quad k=1,\ldots,N.
\]
Moreover, if
\[
\sum_{k=1}^{N}c_k\phi_k=0,
\]
then, after differentiating, we obtain
\[
\sum_{k=1}^{N}c_kT_{k-1}=0.
\]
Since
\[
T_0,T_1,\ldots,T_{N-1}
\]
are linearly independent, it follows that
\[
c_1=\cdots=c_N=0.
\]
Thus
\[
\Phi=\left\{\phi_1,\ldots,\phi_N\right\}
\]
is a basis of $V_{-\frac{1}{2},-\frac{1}{2}}$. We consider the derivative matrix associated with this basis, defined by
\[
\left[
D_{\Phi,N}^{\left(-\frac{1}{2},-\frac{1}{2};-\frac{1}{2},-\frac{1}{2}\right)}
\right]_{ik}
=
\int_{s_i}
\phi_k'(x)\frac{\dd x}{\sqrt{1-x^2}},
\quad i,k=1,\ldots,N.
\]
Using~\eqref{eq:cheb-weight-basis}, this becomes
\begin{equation}\label{eq:cheb-weight-matrix-entry}
\left[
D_{\Phi,N}^{\left(-\frac{1}{2},-\frac{1}{2};-\frac{1}{2},-\frac{1}{2}\right)}
\right]_{ik}
=
\int_{x_i}^{x_{i-1}}
T_{k-1}(x)\frac{\dd x}{\sqrt{1-x^2}}.
\end{equation}
We compute this integral explicitly. Set
\[
x=\cos\theta,
\quad
\dd x=-\sin\theta\dd\theta,
\]
and
\[
\theta_i=\frac{i\pi}{N},
\quad i=0,\ldots,N.
\]
Since
\[
T_{k-1}(\cos\theta)
=
\cos\left((k-1)\theta\right)
\]
and
\[
\frac{\dd x}{\sqrt{1-x^2}}
=
-\dd\theta,
\]
formula~\eqref{eq:cheb-weight-matrix-entry} gives
\[
\left[
D_{\Phi,N}^{\left(-\frac{1}{2},-\frac{1}{2};-\frac{1}{2},-\frac{1}{2}\right)}
\right]_{ik}
=
\int_{\theta_{i-1}}^{\theta_i}
\cos\left((k-1)\theta\right)\dd\theta.
\]
Therefore, for $k=1$, we have
\[
\left[
D_{\Phi,N}^{\left(-\frac{1}{2},-\frac{1}{2};-\frac{1}{2},-\frac{1}{2}\right)}
\right]_{i1}
=
\frac{\pi}{N},
\quad i=1,\ldots,N,
\]
while, for $k=2,\ldots,N$,
\begin{eqnarray*}
\left[
D_{\Phi,N}^{\left(-\frac{1}{2},-\frac{1}{2};-\frac{1}{2},-\frac{1}{2}\right)}
\right]_{ik}
&=&
\frac{
\sin\left((k-1)\theta_i\right)
-
\sin\left((k-1)\theta_{i-1}\right)
}{k-1}
\\[1mm]
&=&
\frac{2}{k-1}
\sin\left(\frac{(k-1)\pi}{2N}\right)
\cos\left(\frac{(k-1)(2i-1)\pi}{2N}\right).
\end{eqnarray*}
We now use the discrete cosine orthogonality on the midpoint grid. Let
\begin{equation}\label{thetastar}
\theta_i^\ast
=
\frac{(2i-1)\pi}{2N},
\quad i=1,\ldots,N.
\end{equation}
For $m,n=0,\ldots,N-1$, we have
\[
\sum_{i=1}^{N}
\cos\left(m\theta_i^\ast\right)
\cos\left(n\theta_i^\ast\right)
=
\begin{cases}
N, & m=n=0,\\[1mm]
\frac{N}{2}, & m=n\geq 1,\\[1mm]
0, & m\neq n.
\end{cases}
\]
Then, for $k,\ell=2,\ldots,N$, with $k\neq \ell$, using~\eqref{thetastar},
we obtain
\begin{eqnarray*}
&&\left[
\left(
D_{\Phi,N}^{\left(-\frac{1}{2},-\frac{1}{2};-\frac{1}{2},-\frac{1}{2}\right)}
\right)^\top
D_{\Phi,N}^{\left(-\frac{1}{2},-\frac{1}{2};-\frac{1}{2},-\frac{1}{2}\right)}
\right]_{k\ell} \\
&=&
\sum_{i=1}^{N}
\left[
D_{\Phi,N}^{\left(-\frac{1}{2},-\frac{1}{2};-\frac{1}{2},-\frac{1}{2}\right)}
\right]_{ik}
\left[
D_{\Phi,N}^{\left(-\frac{1}{2},-\frac{1}{2};-\frac{1}{2},-\frac{1}{2}\right)}
\right]_{i\ell}
\\
&=&
\frac{4}{(k-1)(\ell-1)}
\sin\left(\frac{(k-1)\pi}{2N}\right)
\sin\left(\frac{(\ell-1)\pi}{2N}\right)
\sum_{i=1}^{N}
\cos\left((k-1)\theta_i^\ast\right)
\cos\left((\ell-1)\theta_i^\ast\right)
\\
&=&
0.
\end{eqnarray*}
Moreover, if $k=1$ and $\ell=2,\ldots,N$, then
\begin{eqnarray*}
&&\left[
\left(
D_{\Phi,N}^{\left(-\frac{1}{2},-\frac{1}{2};-\frac{1}{2},-\frac{1}{2}\right)}
\right)^\top
D_{\Phi,N}^{\left(-\frac{1}{2},-\frac{1}{2};-\frac{1}{2},-\frac{1}{2}\right)}
\right]_{1\ell}\\
&=&
\sum_{i=1}^{N}
\left[
D_{\Phi,N}^{\left(-\frac{1}{2},-\frac{1}{2};-\frac{1}{2},-\frac{1}{2}\right)}
\right]_{i1}
\left[
D_{\Phi,N}^{\left(-\frac{1}{2},-\frac{1}{2};-\frac{1}{2},-\frac{1}{2}\right)}
\right]_{i\ell}
\\
&=&
\frac{\pi}{N}
\frac{2}{\ell-1}
\sin\left(\frac{(\ell-1)\pi}{2N}\right)
\sum_{i=1}^{N}
\cos\left((\ell-1)\theta_i^\ast\right)=
0.
\end{eqnarray*}
Hence the columns of $D_{\Phi,N}^{\left(-\frac{1}{2},-\frac{1}{2};-\frac{1}{2},-\frac{1}{2}\right)}$ 
are pairwise orthogonal. Therefore
\[
\left(
D_{\Phi,N}^{\left(-\frac{1}{2},-\frac{1}{2};-\frac{1}{2},-\frac{1}{2}\right)}
\right)^\top
D_{\Phi,N}^{\left(-\frac{1}{2},-\frac{1}{2};-\frac{1}{2},-\frac{1}{2}\right)}
=
\operatorname{diag}
\left(
\lambda_1,\ldots,\lambda_N
\right),
\]
where
\[
\lambda_1
=
\frac{\pi^2}{N}, \quad \lambda_k
=
\frac{2N}{(k-1)^2}
\sin^2\left(\frac{(k-1)\pi}{2N}\right), \quad k=2,\ldots,N.
\]
If $N=1$, the matrix has only one singular value and therefore
\[
\kappa_2\left(
D_{\Phi,N}^{\left(-\frac{1}{2},-\frac{1}{2};-\frac{1}{2},-\frac{1}{2}\right)}
\right)=1.
\]
Assume now that $N\geq2$. The singular values of $D_{\Phi,N}^{\left(-\frac{1}{2},-\frac{1}{2};-\frac{1}{2},-\frac{1}{2}\right)}$ are
\[
\sigma_k=\sqrt{\lambda_k},
\quad k=1,\ldots,N.
\]
We identify the greatest and the smallest diagonal entries. For
$k=2,\ldots,N$, the estimate $\sin x\leq x$ gives
\[
\lambda_k
\leq
\frac{\pi^2}{2N}
<
\frac{\pi^2}{N}
=
\lambda_1.
\]
Moreover, since the function
\[
t\mapsto \frac{\sin t}{t}
\]
is decreasing on $(0,\pi/2]$, the sequence
\[
\lambda_2,\ldots,\lambda_N
\]
is decreasing. Hence the smallest diagonal entry is
\[
\lambda_N
=
\frac{2N}{(N-1)^2}
\sin^2\left(\frac{(N-1)\pi}{2N}\right)
=
\frac{2N}{(N-1)^2}
\cos^2\left(\frac{\pi}{2N}\right).
\]
It follows that
\[
\kappa_2\left(
D_{\Phi,N}^{\left(-\frac{1}{2},-\frac{1}{2};-\frac{1}{2},-\frac{1}{2}\right)}
\right)
=
\sqrt{\frac{\lambda_1}{\lambda_N}}
=
\frac{\pi (N-1)}
{\sqrt{2}N\cos\left(\frac{\pi}{2N}\right)}.
\]
In particular,
\[
\lim_{N\to\infty}
\kappa_2\left(
D_{\Phi,N}^{\left(-\frac{1}{2},-\frac{1}{2};-\frac{1}{2},-\frac{1}{2}\right)}
\right)
=
\frac{\pi}{\sqrt{2}}.
\]
Thus, in this explicit primitive basis, the reduced derivative matrix is
uniformly well conditioned with respect to $N$. Finally, by setting
\[
\widehat\phi_k
=
\frac{\phi_k}{\sqrt{\lambda_k}},
\quad k=1,\ldots,N,
\]
and
\[
\widehat\Phi
=
\left\{\widehat\phi_1,\ldots,\widehat\phi_N\right\},
\]
then
\[
\left(
D_{\widehat\Phi,N}^{\left(-\frac{1}{2},-\frac{1}{2};-\frac{1}{2},-\frac{1}{2}\right)}
\right)^\top
D_{\widehat\Phi,N}^{\left(-\frac{1}{2},-\frac{1}{2};-\frac{1}{2},-\frac{1}{2}\right)}
=
I_N.
\]
Therefore, after this normalization, the above construction gives the
orthonormal basis of $V_{-\frac{1}{2},-\frac{1}{2}}$, with respect to the derivative
moment inner product, whose existence was proved in
Corollary~\ref{cor:mixed-jacobi-diagonal-gram}.
\end{example}

\begin{remark}
The previous example shows that, for the Chebyshev weight of the first kind,
the natural primitive basis already gives a diagonal reduced Gram matrix. We
now point out that the same diagonal structure can be obtained for all four
Chebyshev weights, provided that the primitive basis is chosen in a suitable
way.

We consider the Chebyshev--Lobatto grid
\begin{equation*}
x_i=\cos\left(\frac{i\pi}{N}\right),
\quad i=0,\ldots,N,
\end{equation*}
and the interval family
\begin{equation*}
s_i=\left[x_i,x_{i-1}\right],
\quad i=1,\ldots,N.
\end{equation*}
Equivalently, after the change of variables $x=\cos\theta$, we set
\begin{equation*}
\theta_i=\frac{i\pi}{N},
\quad
\theta_i^\ast=\frac{(2i-1)\pi}{2N},
\quad i=1,\ldots,N.
\end{equation*}
For $m\geq0$, define the vector $\boldsymbol b_m\in\RR^N$ by
\begin{equation*}
\left[\boldsymbol b_m\right]_i
=
\int_{\theta_{i-1}}^{\theta_i}\cos(m\theta)\dd\theta,
\quad i=1,\ldots,N.
\end{equation*}
Then
\begin{equation*}
\left[\boldsymbol b_0\right]_i
=
\frac{\pi}{N},
\quad i=1,\ldots,N,
\end{equation*}
and, for $m\geq1$,
\begin{equation*}
\left[\boldsymbol b_m\right]_i
=
\frac{2}{m}
\sin\left(\frac{m\pi}{2N}\right)
\cos\left(m\theta_i^\ast\right),
\quad i=1,\ldots,N.
\end{equation*}
By the discrete cosine orthogonality on the midpoint grid, the vectors
\begin{equation*}
\boldsymbol b_0,\boldsymbol b_1,\ldots,\boldsymbol b_{N-1}
\end{equation*}
are pairwise orthogonal. Moreover,
\begin{equation*}
\boldsymbol b_N=\boldsymbol 0.
\end{equation*}
Indeed, for $i=1,\ldots,N$, we have
\begin{eqnarray*}
\left[\boldsymbol b_N\right]_i
&=&
\int_{\theta_{i-1}}^{\theta_i}\cos(N\theta)\dd\theta
\\
&=&
\frac{1}{N}
\left(
\sin(N\theta_i)-\sin(N\theta_{i-1})
\right)
\\
&=&
\frac{1}{N}
\left(
\sin(i\pi)-\sin((i-1)\pi)
\right)
=
0.
\end{eqnarray*}
Consequently,
\begin{equation*}
\boldsymbol b_m^\top \boldsymbol b_n=0,
\quad m\neq n,
\quad 0\leq m,n\leq N-1.
\end{equation*}
Their squared norms are
\begin{equation*}
\left\|\boldsymbol b_0\right\|_2^2
=
\frac{\pi^2}{N},
\end{equation*}
and, for $m=1,\ldots,N-1$,
\begin{equation*}
\left\|\boldsymbol b_m\right\|_2^2
=
\frac{2N}{m^2}
\sin^2\left(\frac{m\pi}{2N}\right).
\end{equation*}
Thus, if the columns of the reduced derivative matrix are 
\begin{equation*}
\boldsymbol b_0,\boldsymbol b_1,\ldots,\boldsymbol b_{N-1},
\end{equation*}
then the corresponding reduced Gram matrix is diagonal. We now describe, for the four Chebyshev weights, explicit primitive bases
which have this property. We denote by
\begin{equation*}
T_m,\quad U_m,\quad V_m,\quad W_m
\end{equation*}
the Chebyshev polynomials of the first, second, third and fourth kind,
respectively. The associated Chebyshev weights are
\begin{equation*}
\omega_T(x)=\frac{1}{\sqrt{1-x^2}},
\quad
\omega_U(x)=\sqrt{1-x^2},
\end{equation*}
and
\begin{equation*}
\omega_V(x)=\sqrt{\frac{1+x}{1-x}},
\quad
\omega_W(x)=\sqrt{\frac{1-x}{1+x}}.
\end{equation*}
In each case, the additive constant in the primitive basis is fixed by the
corresponding weighted mean condition. More precisely, if $\omega_Q$ is one
of the above weights, we impose
\begin{equation*}
\mathcal M_Q(\psi)
=
\int_{-1}^{1}\psi(x)\omega_Q(x)\dd x
=
0.
\end{equation*}
The four choices of derivative basis are summarized in the following table.
Here $j=0,\ldots,N-1$, and the polynomial $\psi_{j+1}^{Q}$ is always
defined by prescribing its derivative and by imposing
\begin{equation*}
\mathcal M_Q\left(\psi_{j+1}^{Q}\right)=0.
\end{equation*}

\begin{center}
\footnotesize
\setlength{\tabcolsep}{2pt}
\renewcommand{\arraystretch}{1.45}
\begin{tabular}{c|c|p{0.58\textwidth}}
\toprule
$Q$ &
$\omega_Q$ &
Derivative of the primitive basis
\\
\midrule
$T$ &
$\displaystyle \frac{1}{\sqrt{1-x^2}}$
&
$\displaystyle
\left(\psi_{j+1}^{T}\right)'(x)=T_j(x)$
\\[2mm]
\midrule
$U$ &
$\displaystyle \sqrt{1-x^2}$
&
If $j\equiv N-2 \pmod 2$, then
\begin{equation*}
\left(\psi_{j+1}^{U}\right)'(x)
=
2\sum_{q=0}^{\frac{N-2-j}{2}} U_{j+2q}(x).
\end{equation*}
\\[-2mm]
\cline{3-3}
$U$ &
$\displaystyle \sqrt{1-x^2}$
&
If $j\equiv N-1 \pmod 2$, then
\begin{equation*}
\left(\psi_{j+1}^{U}\right)'(x)
=
2\sum_{\substack{q\geq0\\ j+2q\leq N-3}} U_{j+2q}(x)
+
\frac{N+1}{N}U_{N-1}(x).
\end{equation*}
\\[-2mm]
\midrule
$V$ &
$\displaystyle \sqrt{\frac{1+x}{1-x}}$
&
$\displaystyle
\left(\psi_{j+1}^{V}\right)'(x)
=
\sum_{m=j}^{N-1}(-1)^{m-j}V_m(x)$
\\[3mm]
\midrule
$W$ &
$\displaystyle \sqrt{\frac{1-x}{1+x}}$
&
$\displaystyle
\left(\psi_{j+1}^{W}\right)'(x)
=
\sum_{m=j}^{N-1}W_m(x)$
\\[3mm]
\bottomrule
\end{tabular}
\end{center}
In all four cases, the corresponding derivative moment column is
$\boldsymbol b_j$. Therefore, if $D_{\Psi^Q,N}$ denotes the reduced
derivative matrix associated with the primitive basis $\Psi^Q$, then
\begin{equation*}
D_{\Psi^Q,N}^\top D_{\Psi^Q,N}
=
\operatorname{diag}
\left(
\lambda_1,\ldots,\lambda_N
\right),
\quad
Q\in\{T,U,V,W\},
\end{equation*}
where
\begin{equation*}
\lambda_1=\frac{\pi^2}{N},
\quad
\lambda_k
=
\frac{2N}{(k-1)^2}
\sin^2\left(\frac{(k-1)\pi}{2N}\right),
\quad k=2,\ldots,N.
\end{equation*}
Thus all four Chebyshev weights admit explicit primitive bases which are
orthogonal with respect to the derivative moment inner product. 
\end{remark}

\begin{remark}
A similar construction can be obtained in the Legendre case. Let
$\omega=1$, and consider again the Chebyshev--Lobatto grid. For $k=1,\ldots,N$, let $\psi_k\in\Pi_N$ be defined by
\begin{equation*}
\psi_k'(x)=U_{k-1}(x),
\quad
\int_{-1}^{1}\psi_k(x)\dd x=0.
\end{equation*}
Then, using the change of variables $x=\cos\theta$, the entries of the corresponding reduced derivative matrix are
\begin{eqnarray*}
\left[D_N^{\rm L}\right]_{ik}
&=&
\int_{x_i}^{x_{i-1}}U_{k-1}(x)\dd x
\\
&=&
\int_{\theta_{i-1}}^{\theta_i}\sin(k\theta)\dd\theta
\\
&=&
\frac{2}{k}
\sin\left(\frac{k\pi}{2N}\right)
\sin\left(k\theta_i^\ast\right),
\end{eqnarray*}
where $\theta_i^\ast$ is defined in~\eqref{thetastar}. Hence, the columns of $D_N^{\rm L}$ are
proportional to the discrete sine vectors on the midpoint grid. By the discrete sine orthogonality relation, the reduced Gram matrix
\[
\left(D_N^{\rm L}\right)^\top D_N^{\rm L}
\]
is diagonal.
\end{remark}

\begin{remark}\label{rem-choice-grid}
The diagonal identities proved in this section depend on the simultaneous
choice of the weight, the basis, and the grid. In the Chebyshev--Lobatto
case this compatibility is explicit. After the change of variables
$x=\cos\theta$, the intervals
\[
s_i=\left[x_i,x_{i-1}\right],
\quad i=1,\ldots,N,
\]
correspond to intervals of equal length in the variable $\theta$. Moreover,
the entries of the derivative moment matrix are expressed in terms of sine
and cosine functions on this discrete angular grid. Hence the diagonal form
of the reduced Gram matrix follows from the discrete trigonometric
orthogonality used above. For general Jacobi parameters, this argument does not apply if the
Chebyshev--Lobatto grid is kept fixed. Indeed, if
\[
\left[D_N\right]_{ik}
=
\int_{s_i}\psi_k'(x)\omega_{\alpha,\beta}(x)\dd x,
\quad i,k=1,\ldots,N,
\]
then
\[
\left[D_N^\top D_N\right]_{k\ell}
=
\sum_{i=1}^{N}
\left(
\int_{s_i}\psi_k'(x)\omega_{\alpha,\beta}(x)\dd x
\right)
\left(
\int_{s_i}\psi_\ell'(x)\omega_{\alpha,\beta}(x)\dd x
\right).
\]
The orthogonality of the Jacobi polynomials gives integral identities on the
whole interval $[-1,1]$ with respect to the weight
$\omega_{\alpha,\beta}$. It does not give, in general, the vanishing of the
finite sum of products of cell moments above. Thus the discrete orthogonality used in the Chebyshev cases is not available,
in general, for Jacobi weights on the same Chebyshev--Lobatto cells. For a fixed pair $(\alpha,\beta)$, a different possibility is to use the
Jacobi--Gauss--Lobatto grid associated with the shifted parameters $(\alpha+1, \beta+1)$. Its interior
nodes are the zeros of $P_{N-1}^{(\alpha+1,\beta+1)}$. Together with a primitive basis satisfying
\[
\widetilde\psi_k'(x)
=
P_{k-1}^{(\alpha+1,\beta+1)}(x),
\quad k=1,\ldots,N,
\]
this choice makes the grid, the weight, and the derivative basis depend on
the same Jacobi parameters. However, the interval family then changes when
$(\alpha,\beta)$ changes. The choice made here is different. We keep the Chebyshev--Lobatto grid fixed.
Therefore the cells $s_i$ do not depend on $(\alpha,\beta)$, and different
Jacobi weights are compared on the same interval family. With this fixed
geometry, the reduced Gram matrix in the original primitive basis is not
diagonal in general. This is exactly the situation treated in the numerical
experiments. Finally, the diagonalization result of
Corollary~\ref{cor:mixed-jacobi-diagonal-gram} remains valid for every
admissible interval family. If $\widetilde D_N$ is any nonsingular reduced
derivative matrix and
\[
\widetilde D_N^\top\widetilde D_N=R^\top R
\]
is its Cholesky factorization, then the change of basis
\[
\psi_k
=
\sum_{j=1}^{N}
\left[R^{-1}\right]_{jk}\widetilde\psi_j,
\quad k=1,\ldots,N,
\]
gives the new reduced matrix
\[
D_N=\widetilde D_N R^{-1}.
\]
Consequently,
\[
D_N^\top D_N
=
R^{-\top}
\widetilde D_N^\top\widetilde D_N
R^{-1}
=
I_N.
\]
Thus the identity Gram matrix is always obtained after a suitable change of
basis, while the fixed Chebyshev--Lobatto grid keeps the interval family
independent of the Jacobi parameters.
\end{remark}

\section{Numerical tests}
\label{sec5}
In this section, we present a series of numerical experiments aimed at
evaluating the performance of the proposed method. We consider the three test
functions
\[
f_1(x)=e^x\cos(3x),
\quad
f_2(x)=\frac{1}{1+25x^2},
\quad
f_3(x)=\sin(4\pi x),
\quad x\in[-1,1],
\]
and we set the Jacobi weight $\omega_{\alpha,\beta}$ with
\[
(\alpha,\beta)=\left(\frac{1}{2},\frac{1}{2}\right).
\]
For any fixed $N\in\mathbb N$, two sets of nodes are used. The first one is the
equispaced grid
\[
X_N^{\mathrm{eq}}
=
\left\{
x_0^{\mathrm{eq}},\ldots,x_N^{\mathrm{eq}}
\right\},
\quad
x_j^{\mathrm{eq}}
=
-1+\frac{2j}{N},
\quad
j=0,\ldots,N,
\]
and the second one is the Chebyshev--Lobatto grid
\[
X_N^{\mathrm{CL}}
=
\left\{
x_0^{\mathrm{CL}},\ldots,x_N^{\mathrm{CL}}
\right\},
\quad
x_j^{\mathrm{CL}}
=
-\cos\left(\frac{j\pi}{N}\right),
\quad
j=0,\ldots,N.
\]
On each grid we consider the four families of intervals
\begin{itemize}
    \item[-] $\mathcal S_L=\left\{\left[x_0,x_i\right]\,:\, i=1,\ldots,N\right\};$

    \item[-] $\mathcal S_R=\left\{\left[x_{i-1},x_N\right]\,:\, i=1,\ldots,N\right\};$

    \item[-] $\mathcal E
    =
    \left\{
    \left[x_{i-1},x_i\right]\,:\, i=1,\ldots,N
    \right\};$

    \item[-] $\mathcal S_{\rm ov}
    =
    \left\{
    s_i^{\rm ov}\,:\, i=1,\ldots,N
    \right\}$, where
    \[
    s_i^{\rm ov}
    =
    \left[x_{i-1},x_{i+1}\right],
    \quad
    i=1,\ldots,N-1,
    \quad
    s_N^{\rm ov}
    =
    \left[x_0,x_1\right].
    \]
\end{itemize}
For a fixed function $f$ and a fixed family
$\mathcal S=\left\{s_1,\ldots,s_N\right\}$, we compute the polynomial
$K_N[f]\in\Pi_N$ such that
\[
\mathcal M_{\alpha,\beta}\left(K_N[f]\right)
=
\mathcal M_{\alpha,\beta}(f)
\]
and
\[
\int_{s_i}
\left(K_N[f]\right)'(x)\omega_{\alpha,\beta}(x)\dd x
=
\int_{s_i}
f'(x)\omega_{\alpha,\beta}(x)\dd x,
\quad i=1,\ldots,N.
\]
In the implementation, the polynomial $K_N[f]$ is represented in the Jacobi
basis
\[
K_N[f](x)
=
\sum_{k=0}^{N}c_k P_k^{(\alpha,\beta)}(x).
\]
By the orthogonality of the Jacobi polynomials, the normalization equation
gives the first row of the linear system. The remaining $N$ rows are obtained
from the derivative moment matrix
\[
D_N^{(\alpha,\beta)}
=
\frac{1}{2}
\left(
A_N^{(\alpha,\beta)}+B_N^{(\alpha,\beta)}
\right)
C_N^{(\alpha,\beta)}.
\]
The matrices $A_N^{(\alpha,\beta)}$ and $B_N^{(\alpha,\beta)}$ are assembled
by using the endpoint formulae given in Remark~\ref{remarkimps}. Thus the
matrix of the linear system is computed from explicit endpoint quantities,
without applying numerical quadrature to its entries. The only quantities evaluated by quadrature are the data depending on the
function $f$, namely
\[
\mathcal M_{\alpha,\beta}(f)
=
\int_{-1}^{1}f(x)\omega_{\alpha,\beta}(x)\dd x
\]
and
\[
\int_{s_i}
f'(x)\omega_{\alpha,\beta}(x)\dd x,
\quad i=1,\ldots,N.
\]
These integrals are computed by Gaussian quadrature. The error is 
measured by the maximum norm
\[
E_N
=
\left\|f-K_N[f]\right\|_\infty .
\]
In Figs.~\ref{fig:error-f1-uniform}, \ref{fig:error-f2-uniform}
and~\ref{fig:error-f3-uniform}, we report the error behaviour on the
equispaced and Chebyshev--Lobatto grids for the different families of
intervals, with
\[
N=5,10,15,20,25,30.
\]

\begin{figure}[ht]
\centering
\includegraphics[width=0.49\textwidth]{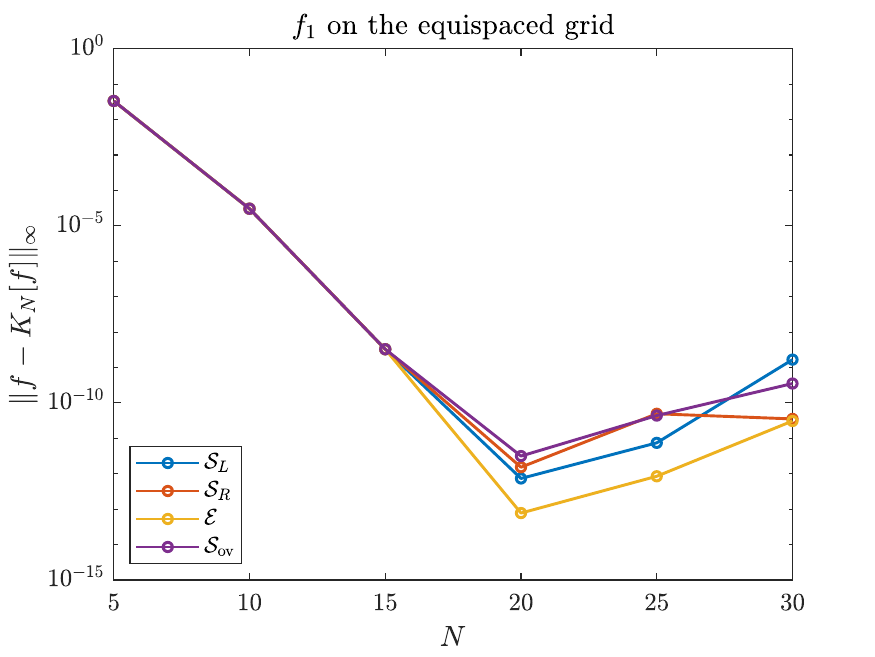}
\includegraphics[width=0.49\textwidth]{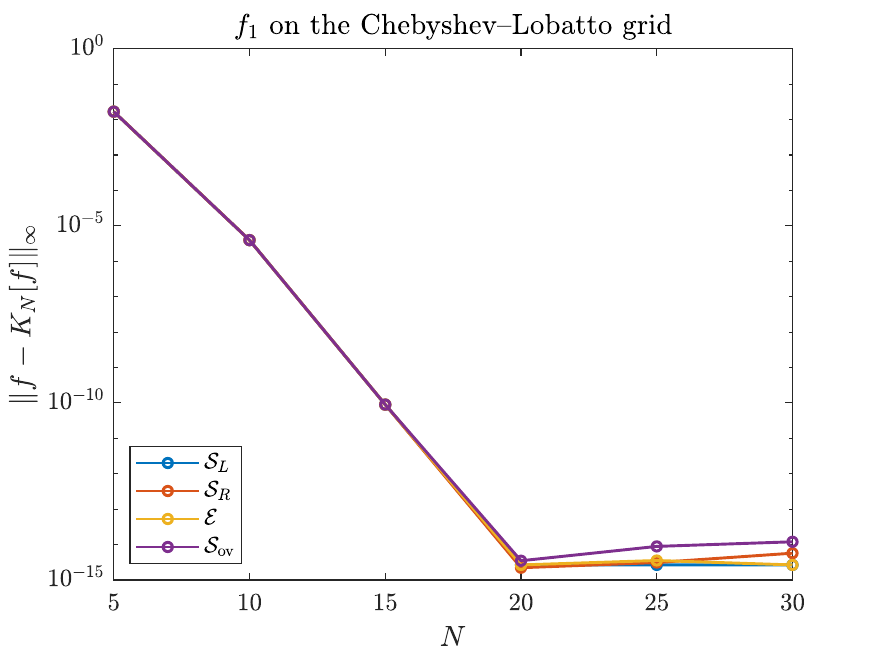}
\caption{Maximum error for $f_1$ on the equispaced grid (left) and on the
Chebyshev--Lobatto grid (right).}
\label{fig:error-f1-uniform}
\end{figure}

\begin{figure}[ht]
\centering
\includegraphics[width=0.49\textwidth]{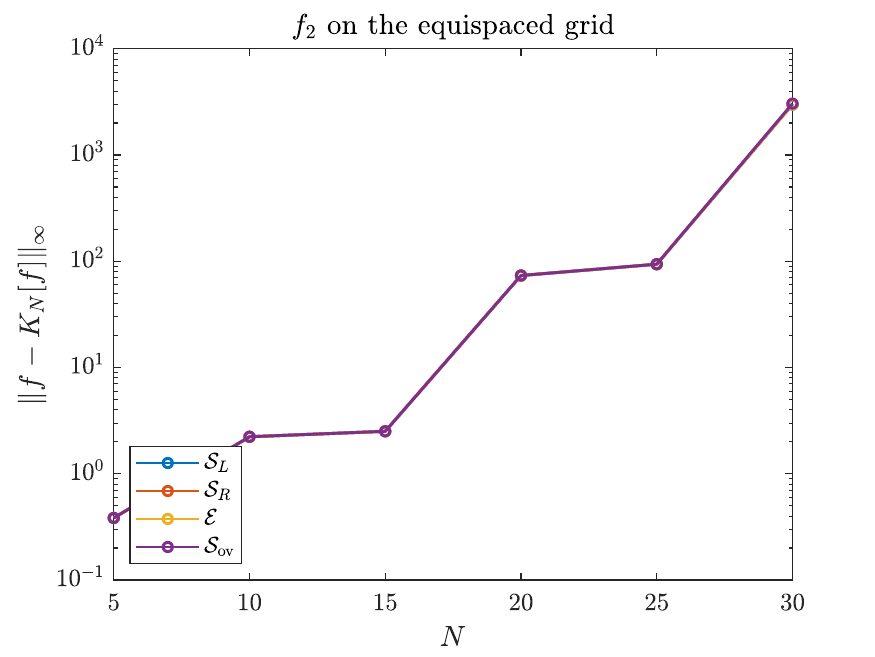}
\includegraphics[width=0.49\textwidth]{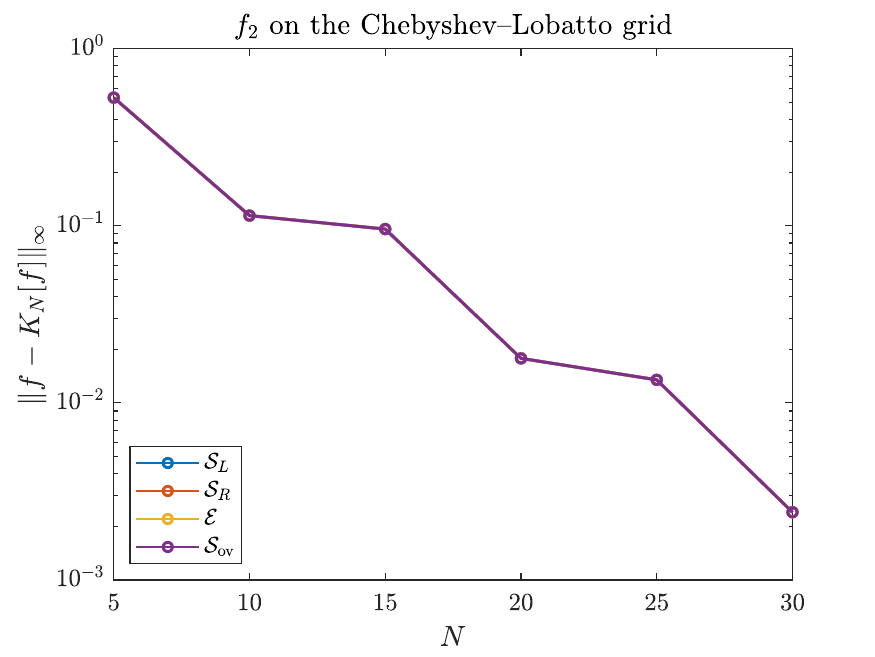}
\caption{Maximum error for $f_2$ on the equispaced grid (left) and on the
Chebyshev--Lobatto grid (right).}
\label{fig:error-f2-uniform}
\end{figure}

\begin{figure}[ht]
\centering
\includegraphics[width=0.49\textwidth]{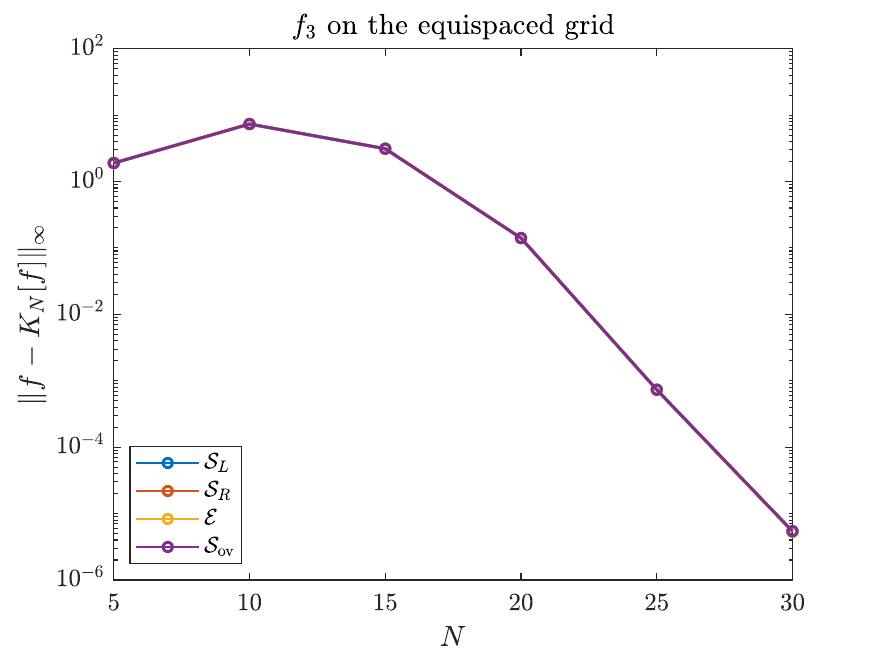}
\includegraphics[width=0.49\textwidth]{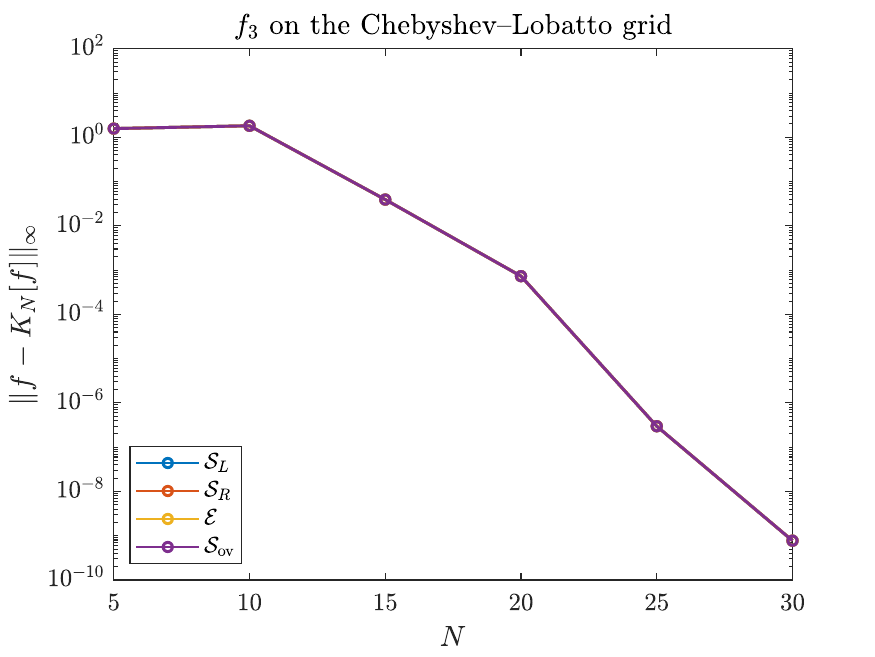}
\caption{Maximum error for $f_3$ on the equispaced grid (left) and on the
Chebyshev--Lobatto grid (right).}
\label{fig:error-f3-uniform}
\end{figure}

The results show that the use of equispaced nodes leads, in general, to a
less accurate approximation than the one obtained with Chebyshev--Lobatto
nodes, as expected. This behaviour is particularly evident for the function
$f_2$, which is the classical Runge-type test function. In this case, for all
the interval families considered, the error associated with the equispaced
grid increases as $N$ grows.

In Fig.~\ref{fig:new}, we show the test functions $f_i$, $i=1,2,3$, and the
corresponding approximations obtained with the Chebyshev--Lobatto
grid for $N=10$ and $N=20$, using the elementary family of intervals
\begin{equation}\label{Ep}
    \mathcal E
=
\left\{
\left[x_{i-1},x_i\right]\,:\, i=1,\ldots,N
\right\}.
\end{equation}
This comparison is included to illustrate the qualitative behaviour of the
approximants for the three test functions.

\begin{figure}[ht]
\centering
\includegraphics[width=0.32\textwidth]{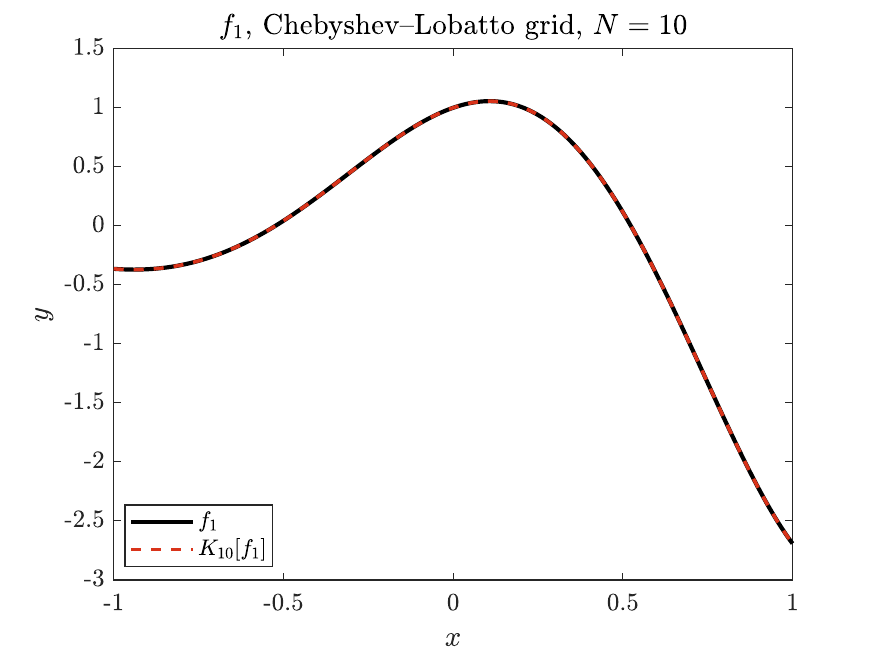}
\includegraphics[width=0.32\textwidth]{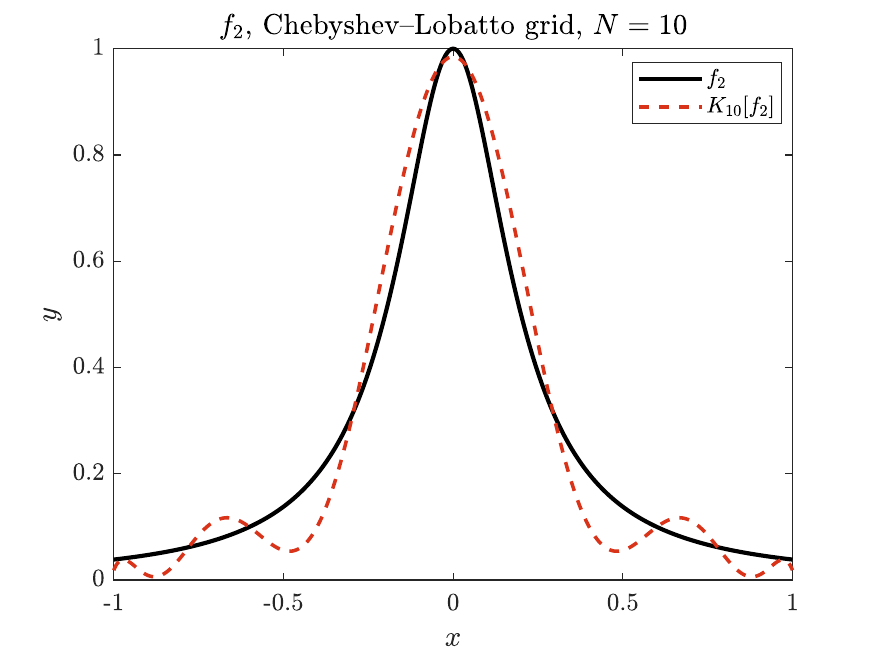}
\includegraphics[width=0.32\textwidth]{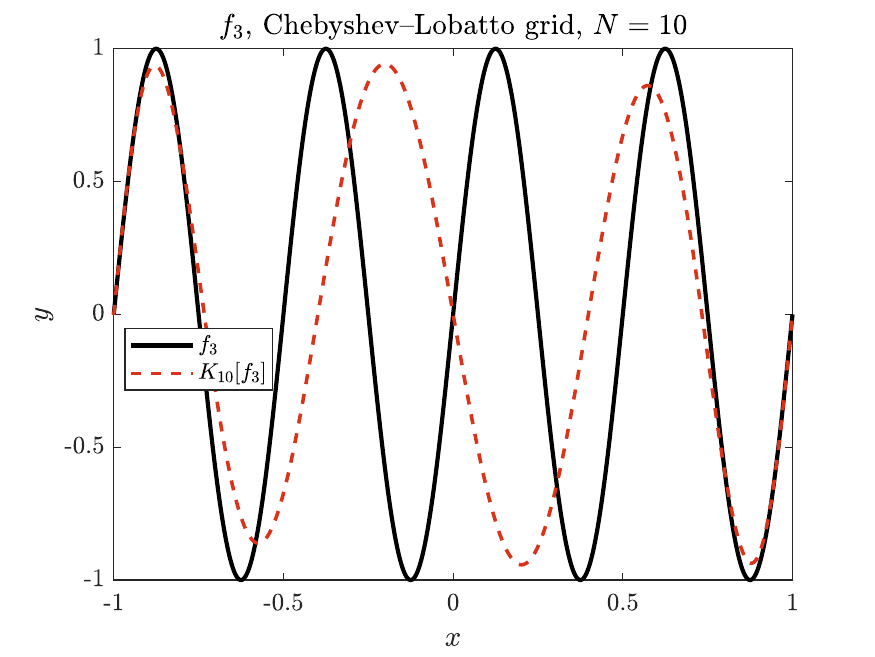}
\includegraphics[width=0.32\textwidth]{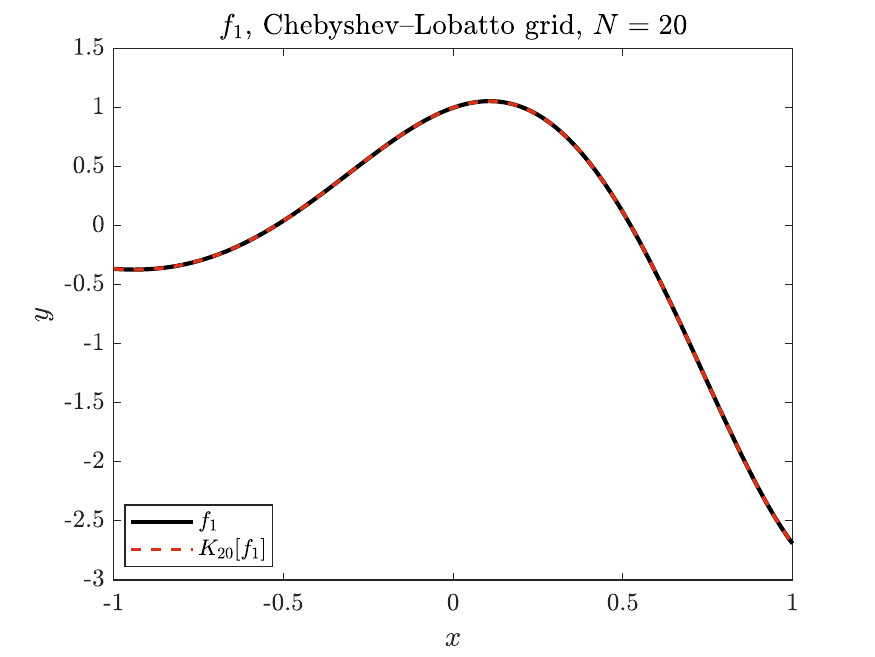}
\includegraphics[width=0.32\textwidth]{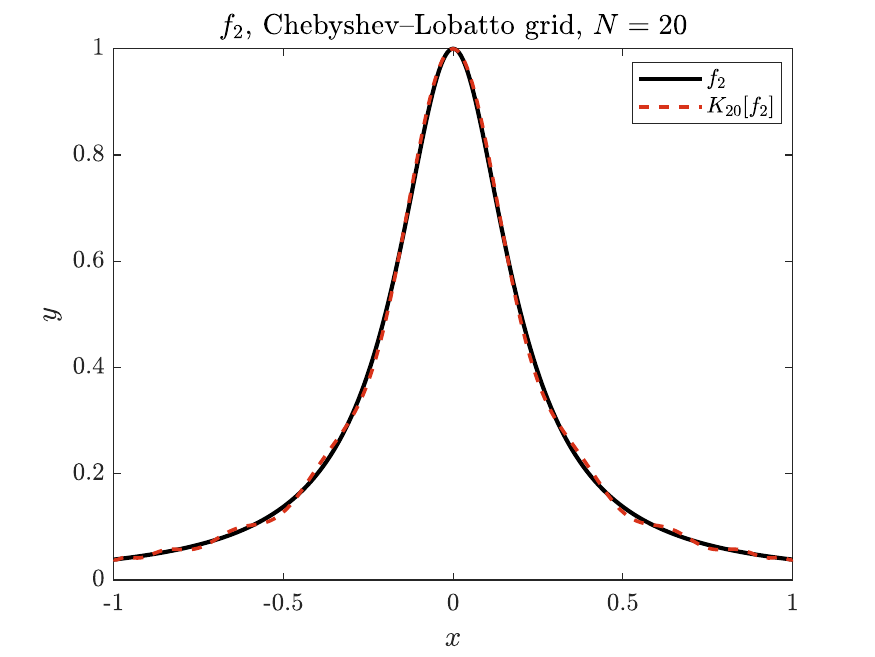}
\includegraphics[width=0.32\textwidth]{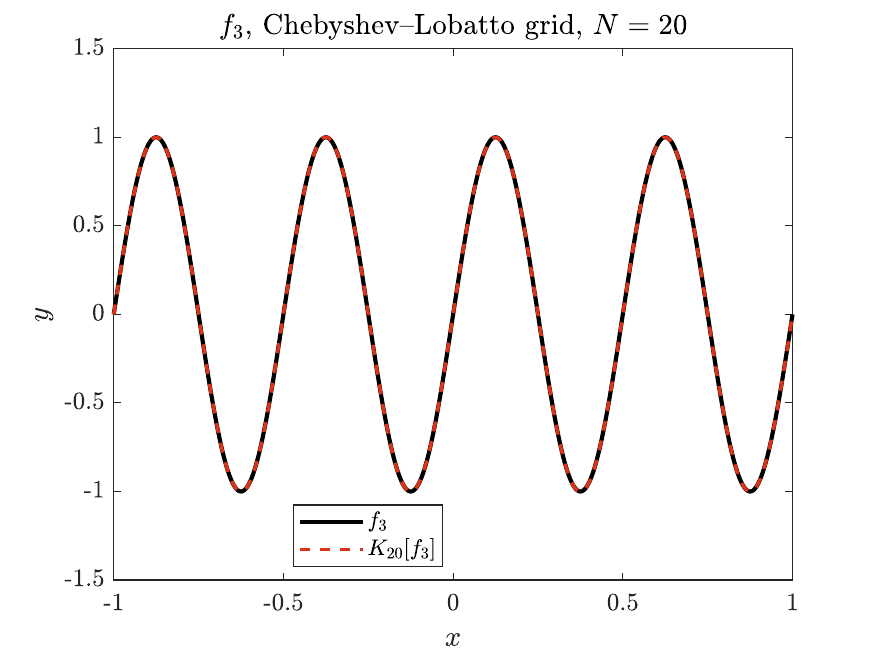}
\caption{Test functions $f_1$, $f_2$ and $f_3$, together with the corresponding
approximations obtained with $N=10$ (top) and $N=20$ (bottom) on the Chebyshev--Lobatto grid, using the
elementary family of intervals $\mathcal E$.}
\label{fig:new}
\end{figure}

Finally, we study the dependence of the method on the Jacobi parameters
$\alpha$ and $\beta$. To isolate the effect of the weight, we fix the
Chebyshev--Lobatto grid with $N=30$ and the elementary family of
intervals~\eqref{Ep}. Then we vary the pair $(\alpha,\beta)$ in the definition
of the Jacobi weight. The corresponding errors are reported in
Table~\ref{tab:jacobi-sensitivity}. The reference choice
$(\alpha,\beta)=(1/2,1/2)$, used in the previous tests, is included in the
comparison. The results show that the errors are only slightly affected by the choice of
the Jacobi parameters. In all cases, the errors remain essentially of the same
order of magnitude when the pair $(\alpha,\beta)$ is varied. The choice of the
Jacobi weight therefore has only a limited effect on the accuracy of the
method. The largest variation is observed for the function $f_1$ when the
difference between $\alpha$ and $\beta$ is large. In this case, passing from
$(\alpha,\beta)=(0,0)$ to $(\alpha,\beta)=(1,16)$ leads to the loss of about
one digit of accuracy. Nevertheless, the overall behaviour of the method
remains essentially unchanged.

\begin{table}[ht]
\centering
\begin{tabular}{cccc}
\toprule
$(\alpha,\beta)$ & $f_1$ & $f_2$ & $f_3$ \\
\midrule
$(0,0)$ & 2.220e-15 & 2.430e-03 & 7.967e-10 \\
$(1/2,1/2)$ & 2.665e-15 & 2.414e-03 & 7.729e-10 \\
$(2,1)$ & 2.220e-15 & 2.474e-03 & 7.298e-10 \\
$(1,2)$ & 2.665e-15 & 2.474e-03 & 7.298e-10 \\
$(4,1)$ & 2.220e-15 & 2.613e-03 & 6.939e-10 \\
$(1,4)$ & 1.776e-15 & 2.613e-03 & 6.939e-10 \\
$(8,2)$ & 2.665e-15 & 2.740e-03 & 6.208e-10 \\
$(2,8)$ & 1.132e-14 & 2.740e-03 & 6.208e-10 \\
$(16,1)$ & 9.326e-15 & 3.067e-03 & 2.281e-09 \\
$(1,16)$ & 1.177e-14 & 3.067e-03 & 2.443e-09 \\
\bottomrule
\end{tabular}
\caption{Sensitivity of the method with respect to the Jacobi parameters
$\alpha$ and $\beta$. The errors are computed with $N=30$ on the
Chebyshev--Lobatto grid, using the elementary family of intervals
$\mathcal E$.}
\label{tab:jacobi-sensitivity}
\end{table}

\section{Conclusions and Future Work}\label{sec:conclusions}

In this work, we have introduced a weighted derivative histopolation framework
on families of intervals. The degrees of freedom are defined by one scalar
normalization and by weighted integral moments of the derivative over
prescribed subintervals. We have proved that the resulting interpolation
problem is unisolvent on $\Pi_N$ when the interval family is
$\omega$-admissible and the normalization is nonzero on constants.
Equivalently, the derivative moments determine the polynomial up to an
additive constant, and the scalar normalization fixes this remaining degree
of freedom. This gives a simple and sharp criterion for the well-posedness of
the method, together with a complete characterization of the admissible scalar
normalizations. We have also studied interval families generated from a fixed grid. When the
endpoints of the intervals belong to the grid, admissibility can be checked
through the nonsingularity of the associated interval matrix. This finite
dimensional criterion depends only on the representation of the intervals in
terms of consecutive cells, and allows several admissible families of
intervals to be constructed in a purely algebraic way. For Jacobi weights, we have analyzed the corresponding weighted derivative
moment problem in Jacobi polynomial bases. The data matrix has a natural
block form which separates the constant polynomial from the nonconstant
polynomial part. The reduced derivative matrix is therefore the relevant
matrix for the nonconstant component. We have expressed this matrix in terms
of shifted Jacobi moment matrices, obtaining an algebraic description of the
method which is adapted to the weighted setting. In Chebyshev configurations, this structure becomes more explicit. For the
four classical Chebyshev families, suitable polynomial bases lead to diagonal
Gram matrices for the reduced derivative matrices. We have also identified
configurations in which the reduced derivative matrices admit exact sine
transform factorizations. These factorizations yield explicit singular values
and spectral condition numbers, and therefore allow the conditioning of the
method to be computed directly. The numerical experiments illustrate the behaviour of the method for different
interval families, grids, and Jacobi parameters. They show the accuracy of the
proposed schemes for the admissible configurations considered in the paper.
They also show the different behaviour of equispaced and Chebyshev--Lobatto
grids, especially in the presence of functions with Runge-type features. The
tests with different Jacobi parameters indicate that, for the examples
considered here, the accuracy is only slightly affected by the choice of the
weight. 

Future work will be devoted to extensions of the present construction beyond
the one-dimensional interval setting. A natural direction is to replace
intervals by suitable families of subdomains and to study weighted moments of
derivatives over higher dimensional elements. The main problem will be to
identify geometric admissibility conditions leading to unisolvent derivative
histopolation schemes, and to understand when the associated matrices have a
structure which allows stability and conditioning to be analyzed explicitly,
or through spectral distribution tools for matrix sequences~\cite{Capizzano:2003:GLT}.

\section*{Declarations}

\textbf{Corresponding author}\\
Federico Nudo, email federico.nudo@unical.it\\

\noindent
\textbf{Conflict of Interest}\\
The authors declare that they have no conflict of interest. Prof. Allal Guessab has been
retired since 1 September 2025 and is currently an independent researcher.\\

\noindent
\textbf{Funding statement}\\
This research was supported by the GNCS-INdAM 2026 project
\emph{``Metodi polinomiali e kernel per l'approssimazione da dati discreti e integrali con software OS''}
(CUP E53C25002010001).
The work of F. Nudo was funded by the European Union -- NextGenerationEU under the Italian National Recovery and Resilience Plan (PNRR), Mission 4, Component 2, Investment 1.2
\lq\lq Finanziamento di progetti presentati da giovani ricercatori\rq\rq,
pursuant to MUR Decree No.~47/2025.\\

\noindent
\textbf{Author Contributions}\\
Allal Guessab and Federico Nudo contributed equally to the conception, development, and writing of this manuscript.
All authors have read and approved the final version of the paper. For this reason, the order of authorship is alphabetical.\\

\noindent
\textbf{Acknowledgement}\\
This research was carried out as part of RITA \textquotedblleft Research ITalian network on Approximation'' and as part of the UMI group \enquote{Teoria dell'Approssimazione e Applicazioni}.\\

\noindent
\textbf{Data Availability}\\
No data were used in this study.

\bibliographystyle{spmpsci}
\bibliography{bibliography}

\end{document}